\documentclass[11pt]{amsart}

\usepackage[utf8]{inputenc}
\usepackage[T1]{fontenc}

\usepackage{amsmath,amssymb,amsfonts,textcomp,amsthm,xifthen,graphicx,color,pgfplots}

\usepackage{enumerate}
\usepackage{enumitem}

\usepackage{pgfplots}
\usepgfplotslibrary{groupplots}
\usepgfplotslibrary{colorbrewer}
\pgfplotsset{compat=newest}
\usepgfplotslibrary{external}
\tikzexternalize

\usepackage{fullpage}

\usepackage[utf8]{inputenc}

\usepackage[pdftex,
            pdftitle={A space-time finite element method for parabolic obstacle problems},
            ]{hyperref}

%
%
\newcommand{\logLogSlopeTriangle}[6]
{

    \pgfplotsextra
    {
        \pgfkeysgetvalue{/pgfplots/xmin}{\xmin}
        \pgfkeysgetvalue{/pgfplots/xmax}{\xmax}
        \pgfkeysgetvalue{/pgfplots/ymin}{\ymin}
        \pgfkeysgetvalue{/pgfplots/ymax}{\ymax}

        \pgfmathsetmacro{\xArel}{#1}
        \pgfmathsetmacro{\yArel}{#3}
        \pgfmathsetmacro{\xBrel}{#1-#2}
        \pgfmathsetmacro{\yBrel}{\yArel}
        \pgfmathsetmacro{\xCrel}{\xArel}

        \pgfmathsetmacro{\lnxB}{\xmin*(1-(#1-#2))+\xmax*(#1-#2)} 
        \pgfmathsetmacro{\lnxA}{\xmin*(1-#1)+\xmax*#1} 
        \pgfmathsetmacro{\lnyA}{\ymin*(1-#3)+\ymax*#3} 
        \pgfmathsetmacro{\lnyC}{\lnyA+#4*(\lnxA-\lnxB)}
        \pgfmathsetmacro{\yCrel}{\lnyC-\ymin)/(\ymax-\ymin)} 

        \coordinate (A) at (rel axis cs:\xArel,\yArel);
        \coordinate (B) at (rel axis cs:\xBrel,\yCrel);
        \coordinate (C) at (rel axis cs:\xCrel,\yCrel);

        \draw[#5]   (A)--
                    (B)-- 
                    (C)-- node[pos=0.5,anchor=west] {#6}
                    cycle;
    }
}

\newcommand{\logLogSlopeTriangleBelow}[6]
{

    \pgfplotsextra
    {
        \pgfkeysgetvalue{/pgfplots/xmin}{\xmin}
        \pgfkeysgetvalue{/pgfplots/xmax}{\xmax}
        \pgfkeysgetvalue{/pgfplots/ymin}{\ymin}
        \pgfkeysgetvalue{/pgfplots/ymax}{\ymax}

        \pgfmathsetmacro{\xArel}{#1}
        \pgfmathsetmacro{\yArel}{#3}
        \pgfmathsetmacro{\xBrel}{#1-#2}
        \pgfmathsetmacro{\yBrel}{\yArel}
        \pgfmathsetmacro{\xCrel}{\xArel}

        \pgfmathsetmacro{\lnxB}{\xmin*(1-(#1-#2))+\xmax*(#1-#2)} 
        \pgfmathsetmacro{\lnxA}{\xmin*(1-#1)+\xmax*#1} 
        \pgfmathsetmacro{\lnyA}{\ymin*(1-#3)+\ymax*#3} 
        \pgfmathsetmacro{\lnyC}{\lnyA+#4*(\lnxA-\lnxB)}
        \pgfmathsetmacro{\yCrel}{\lnyC-\ymin)/(\ymax-\ymin)} 

        \coordinate (A) at (rel axis cs:\xArel,\yArel);
        \coordinate (B) at (rel axis cs:\xBrel,\yCrel);
        \coordinate (C) at (rel axis cs:\xBrel,\yArel);

        \draw[#5]   (A)--
                    (B)-- node[pos=0.5,anchor=east] {#6}
                    (C)-- 
                    cycle;
    }
}


\newtheorem{theorem}{Theorem}

\newtheorem{lemma}[theorem]{Lemma}

\newtheorem{proposition}[theorem]{Proposition}
\newtheorem{remark}[theorem]{Remark}

\newcommand{\patch}{\omega}
\newcommand{\Patch}{\Omega}

\def\di{\mathrm{d}}


%

\def\enorm#1{|\hspace*{-.5mm}|\hspace*{-.5mm}|#1|\hspace*{-.5mm}|\hspace*{-.5mm}|}

\DeclareMathOperator*{\argmin}{arg\, min}

\newcommand{\ip}[2]{(#1\hspace*{.5mm},#2)}
\newcommand{\dual}[2]{\langle#1\hspace*{.5mm},#2\rangle}

\newcommand{\diam}{\mathrm{diam}}

\renewcommand{\div}{\operatorname{div}}

\newcommand{\divx}{\operatorname{div}_{\bx}}
\newcommand{\divst}{\operatorname{div}_{t,\bx}}
\newcommand{\gradx}{\nabla_{\bx}}

\newcommand{\Hdivxset}[1]{\boldsymbol{H}(\divx;#1)}

\newcommand{\set}[2]{\big\{#1\,:\,#2\big\}}

\newcommand{\RT}{\ensuremath{\boldsymbol{RT}}}

\newcommand{\R}{\ensuremath{\mathbb{R}}}
\newcommand{\N}{\ensuremath{\mathbb{N}}}

\newcommand{\OO}{\ensuremath{\mathcal{O}}}

\newcommand{\cV}{\ensuremath{\mathcal{V}}}
\newcommand{\cN}{\ensuremath{\mathcal{N}}}
\newcommand{\cP}{\ensuremath{\mathcal{P}}}
\newcommand{\cPtri}{\ensuremath{\mathcal{P}_{\triangle}}}
\newcommand{\cPten}{\ensuremath{\mathcal{P}_{\square}}}
\newcommand{\cU}{\ensuremath{\mathcal{U}}}
\newcommand{\cL}{\ensuremath{\mathcal{L}}}
\newcommand{\cW}{\ensuremath{\mathcal{W}}}
\newcommand{\cK}{\ensuremath{\mathcal{K}}}
\newcommand{\cKsym}{\ensuremath{\mathcal{K}}}
\newcommand{\cKsymDisc}{\ensuremath{\cKsym_{\cP}}}
\newcommand{\cKsymDiscTri}{\ensuremath{\cKsym_{\cPtri}}}
\newcommand{\cKsymDiscTen}{\ensuremath{\cKsym_{\cPten}}}

\newcommand{\bbP}{\ensuremath{\mathbb{P}}}
\newcommand{\bbS}{\ensuremath{\mathbb{S}}}

\newcommand\iop{\mathcal{I}}
\newcommand\jop{\mathcal{J}}

\newcommand{\bA}{\ensuremath{\boldsymbol{A}}}

\newcommand{\bb}{\ensuremath{\boldsymbol{b}}}

\newcommand{\blfsym}{\ensuremath{a}}

\newcommand{\rhssym}{\ensuremath{F}}


\newcommand{\bsigma}{{\boldsymbol\sigma}}

\newcommand{\btau}{{\boldsymbol\tau}}
\newcommand{\bchi}{{\boldsymbol\chi}}

\newcommand{\bx}{{\boldsymbol{x}}}
\newcommand{\by}{{\boldsymbol{y}}}

\newcommand{\bu}{\boldsymbol{u}}
\newcommand{\bv}{\boldsymbol{v}}



\begin{document}

\title{A space-time finite element method for parabolic obstacle problems}
\date{\today}

\author{Jos\'e Joaqu\'in Carvajal}
\author{Davood Damircheli}
\address{Department of Mathematics and Statistics, Mississippi State University, Mississippi State, USA}
\email{ddamircheli@lsu.edu}

\author{Thomas F\"uhrer}
\address{Facultad de Matem\'{a}ticas, Pontificia Universidad Cat\'{o}lica de Chile, Santiago, Chile}
\email{jjcarvajal@uc.cl, thfuhrer@uc.cl \emph{(corresponding author)}, francisco.fuica@uc.cl}

\author{Francisco Fuica}

\author{Michael Karkulik}
\address{Departamento de Matem\'{a}tica, Universidad T\'{e}cnica Federico Santa Mar\'{i}a, Valpara\'{i}so, Chile}
\email{michael.karkulik@usm.cl}

\thanks{{\bf Acknowledgment.}
JC was supported by ANID through Doctorado Nacional/2022-21221458.
TF was supported by ANID through FONDECYT project 1210391, FF was supported by ANID through FONDECYT project 3230126, MK was supported by ANID through FONDECYT project 1210579.
The authors thank Prof. Gregor Gantner from University of Bonn for sharing his codes.}

\keywords{parabolic obstacle problem, American option pricing, error estimates, space-time FEM, least-squares method}
\subjclass[2010]{
49J40, 
58E35,  
65N12,        
65N30}        
\begin{abstract}
  We propose and analyze a general framework for space-time finite element methods that is based on least-squares finite element methods for solving a first-order reformulation of the thick parabolic obstacle problem.
  Discretizations based on simplicial and prismatic meshes are studied and we show a priori error estimates for both.
  Convergence rates are derived for sufficiently smooth solutions.
  Reliable a posteriori bounds are provided and used to steer an adaptive algorithm. 
  Numerical experiments including a one-phase Stefan problem and an American option pricing problem are presented.
\end{abstract}
\maketitle

\section{Introduction}
\noindent
In this article we study a space-time finite element method (FEM) based on an augmented least-squares FEM (LS-FEM) for solving the parabolic obstacle problem
\begin{subequations}\label{eq:model}
\begin{alignat}{2}
  \partial_t u + \cL u &\geq f &\quad&\text{in } Q:=(0,T)\times\Omega,\label{eq:model:a} \\
  u&\geq g &\quad&\text{in } Q, \label{eq:model:b}\\
  (\partial_t u + \cL u-f)(u-g) &= 0 &\quad&\text{in } Q, \label{eq:model:c}\\
  u &= 0 &\quad&\text{on } (0,T)\times \partial\Omega, \label{eq:model:d}\\
  u(0) &= u_0 &\quad&\text{in } \Omega. \label{eq:model:e}
\end{alignat}
\end{subequations}
Here, $\Omega\subset \R^d$ with $d\in\{1,2,3\}$ is a bounded Lipschitz domain, $u_0$ is some initial datum, $f$ the load term, $g$ the obstacle, and operator $\cL$ has the form
\begin{align*}
  \cL u (t,\bx) = -\divx(\bA(t,\bx)\gradx u(t,\bx)) + \bb(t,\bx)\cdot\gradx u(t,\bx) + c(t,\bx) u(t,\bx), 
\end{align*}
$(t,\bx)\in Q = (0,T)\times \Omega$.
Under some assumptions on the coefficients of operator $\cL$ and data $f$, $g$, $u_0$, model problem~\eqref{eq:model} admits a unique (weak) solution, see, e.g.,~\cite{Brezis72,CharrierTroianello78,LionsStampacchia}. In Section~\ref{sec:analysis} below we give details on definitions and assumptions. 

Parabolic obstacle problems play an important role for optimal stopping problems, e.g., American option modelling in financial mathematics, or the one-phase Stefan problem, see~\cite{MR2256030},~\cite{MR869544}.
For a general overview on applications and mathematical formulations of obstacle problems resp. variational inequalities we refer the interested reader to~\cite{MR880369,KinderlehrerStampacchia00}, see also~\cite{GLT81} for a monograph more focused on numerical analysis of variational inequalities.

Error estimates of time-stepping methods in combination with finite elements in space have been developed at least as early as~\cite{Johnson76}, see also~\cite{Vuik90} and the more recent work~\cite{GudiMajumder19}.
A challenge that all these references have identified is the regularity of solutions to~\eqref{eq:model}. 
For instance, for $\cL u = -\Delta_{\bx} u$, and sufficiently regular data and domain $\Omega$, the solution $u$ satisfies $u\in L^2( (0,T);H^2(\Omega))\cap H^1( (0,T);H_0^1(\Omega))$, see~\cite{Brezis72} as well as~\cite[Eq.(1.3)]{Johnson76}. 
However, the global regularity $u\in H^2( (0,T); H^{-1}(\Omega))$ is in general not true. 
Even worse, the obstacle function $g$ and/or the initial datum $u_0$ are often not smooth in applications, e.g., $g\in L^2( (0,T); H^{3/2-\varepsilon}(\Omega))$ and $u_0\in H^{3/2-\varepsilon}(\Omega)$ ($\varepsilon>0$) for the American put option problem (see Section~\ref{sec:blackscholes} below).
These reasons motivate to develop and study a posteriori error estimators as in~\cite{MR2355709,MR2453213}, see also~\cite{MR4107013} for a slightly different model problem.
In general, it is expected that the free boundary moves in time. Therefore, it seems necessary to adapt the spatial mesh according to error indicators to obtain an efficient numerical solution scheme.
Mesh adaption requires refining and coarsening, where the latter, to the best of our knowledge, has not been fully analyzed for parabolic obstacle problems, see~\cite[Remark~7]{MR2453213}.

Interest in space-time finite element methods has grown over the past decades, see~\cite{MR3020957,MR4205051,MR3835623,FuehrerKarkulik21,GantnerStevenson21,DieningStevensonStorn25} to name a few recent works, and~\cite{MR1107894} for an earlier contribution.
  Motivations for studying space-time methods for solving parabolic PDEs include quasi-optimality for any given trial space and simultaneous space-time adaptivity, and, potentially, massive parallelization.
A disadvantage of space-time methods is the increased memory requirements, which, however, gets obsolete in optimal control or data assimilation problems.

Some works that consider an error analysis in space-time norms for parabolic obstacle problems include~\cite{MR3948751} and~\cite{MR3163875}.
The first one deals with a priori and a posteriori estimates for elliptic and parabolic variational inequalities involving the fractional Laplacian and time-independent obstacles. The latter is a time-stepping method that focuses on $hp$ discretizations. 
Another work that deals with parabolic variational inequalities and the fractional Laplacian is~\cite{MR3542012}.
Let us note that these works are based on separate discretization in time whereas in the work at hand we focus on fully simultaneous space-time discretizations.
Existence and uniqueness of solutions to variational inequalities involving noncoercive forms has been thoroughly studied in~\cite{GlasUrban14}. Particularly, the case of parabolic variational inequalities and discretizations of Petrov--Galerkin type are analyzed.

The aim of this article is to develop a space-time FEM for solving~\eqref{eq:model} and keep the advantageous properties noted before. As basis, we use the LS-FEMs from~\cite{FuehrerKarkulik21,GantnerStevenson21} and combine them with ideas from~\cite{LSQobstacle} which considered elliptic obstacle problems. 
  Another work that deals with LS-FEM for elliptic obstacle problems is~\cite{MR3577944}.
To obtain a suitable formulation to apply LS-FEM techniques we rewrite model~\eqref{eq:model} as first-order system with the additional variable $\lambda:= \partial_t u + \cL u -f$ which can be interpreted as a reaction force.
Then, we show that this system is equivalent to a variational inequality of the first kind where the involved bilinear form is symmetric, coercive and bounded. The discrete variational inequality itself is equivalent to the minimization problem
\begin{align*}
  \min_{(u,\bsigma,\lambda)\in \cKsymDisc} \frac12\|\partial_t u+\divx\bsigma-\lambda-f\|_{L^2(Q)}^2 + \frac12 \|\gradx u+\bsigma\|_{L^2(Q)}^2 + \frac12\|u(0)-u_0\|_{L^2(\Omega)}^2 + \frac12\dual{\lambda}{u-g},
\end{align*}
where for a simpler presentation in this introduction we have taken $\cL = -\Delta_{\bx}$, $\diam(\Omega)\leq 1$ and $\cKsymDisc$ denotes a non-empty, closed and convex set which includes the conditions $u\geq g$, $\lambda \geq 0$ in a discrete sense.
The functional in the minimization problem is of least-squares type augmented by the (duality) term $\dual{\lambda}{u-g}$ which implements the complementarity condition for~\eqref{eq:model}.
We prove that our proposed method is quasi-optimal in the sense of the C\'ea--Falk lemma for any choice of discrete trial space and set $\cKsymDisc$.
Furthermore, we consider discretizations based on simplicial meshes and prismatic meshes which are extensions of the FEM spaces analyzed in~\cite{FuehrerKarkulik21} and~\cite{GantnerStevenson24,StevensonStorn23}, respectively.
Particularly, we give details on convergence rates for prismatic meshes by extending some results from~\cite{StevensonStorn23} and combine them with positivity preserving (quasi-)interpolation operators.
An a-posteriori error estimate is provided that consists of least-squares terms plus terms that measure the violation of the discretized complementarity condition and the penetration of the discrete solution with the obstacle.

\subsection{Outline}
The remainder of this document is organized as follows: 
In Section~\ref{sec:analysis} we revisit well-posedness of problem~\eqref{eq:model} and recast it into a first-order system. 
We then introduce our LS-FEM based method, prove its well-posedness and define discretization spaces.
Section~\eqref{sec:aprioriposteriori} deals with a priori and a posteriori estimates. For the a priori analysis on tensor meshes presented in Section~\ref{sec:apriori} we rely on quasi-interpolation operators introduced in Section~\ref{sec:quasi_inter_tensor}.
A posteriori estimators are studied in Section~\ref{sec:aposteriori}.
The work is concluded by numerical experiments (Section~\ref{sec:numerics}) where we study different setups including the Stefan problem and the American put option pricing.

\section{First-order formulation and least-squares discretization}\label{sec:analysis}

\subsection{Sobolev and Bochner--Sobolev spaces}
Throughout this work, we use common notations for Lebesgue and Sobolev spaces.
In particular, we set $H^{k}(\Omega) := W^{k,2}(\Omega)$ with $k\in\{1,2,3\}$. 
We denote by $H_0^1(\Omega)$ the closure of $C_0^\infty(\Omega)$ under $\|\nabla v\|_{L^{2}(\Omega)}$, whereas $H^{-1}(\Omega)$ denotes the dual space of $H_0^1(\Omega)$. 
The duality on $H^{-1}(\Omega)\times H_0^1(\Omega)$ is written as
\begin{align*}
  \dual{\phi}{v} \quad\text{for } \phi\in H^{-1}(\Omega), \, v \in H_0^1(\Omega).
\end{align*}
Note that $\dual{\phi}{v} = \ip{\phi}v_{L^{2}(\Omega)}$ for all $v\in H_0^1(\Omega)$ and $\phi\in L^2(\Omega)$. 
The dual space $H^{-1}(\Omega)$ is equipped with norm
\begin{align*}
  \|\phi\|_{H^{-1}(\Omega)}:= \sup_{0\neq v\in H_0^{1}(\Omega)} \frac{\dual{\phi}v}{\|\nabla v\|_{L^{2}(\Omega)}}, \quad \phi \in H^{-1}(\Omega).
\end{align*}

Given a Hilbert space $H$ with norm $\|\cdot\|_{H}$ and $v : (0,T)\to H$ (strongly measurable), we set  
\begin{align*}
\|v\|_{L^{2}((0,T);H)}^{2} := \int_{0}^{T}\|v(t)\|_{H}^{2}\mathrm{d}t, 
\qquad
\|v\|_{H^{k}((0,T);H)}^{2} := \|v\|_{L^{2}((0,T);H)}^{2} + \sum_{j=1}^k\|\partial_{t}^{j}v\|_{L^{2}((0,T);H)}^{2},
\end{align*}
where $\partial_{t}^{j}$ denotes the $j$-th (weak) derivative with respect to the time variable.
We consider the following Bochner spaces
\begin{align*}
L^2( (0,T); H) & :=\{v: (0,T) \to H  : \, \|v\|_{L^{2}((0,T);H)} < \infty\}, \\
H^k( (0,T); H) & :=\{v: (0,T) \to H : \, \|v\|_{H^{k}((0,T);H)} < \infty\}.
\end{align*}
Note that $L^{2}((0,T);L^{2}(\Omega))$ and $L^{2}(Q)$ are isometrically isomorph and we identify these spaces for the remainder of this article.
For the analysis of the variational formulations we use the spaces
\begin{alignat*}{2}
  \cV &= L^2( (0,T);H_0^1(\Omega)), 
  &\cV^*=\,& L^2( (0,T);H^{-1}(\Omega)), \\
  \cW &= \cV\cap H^1( (0,T);H^{-1}(\Omega)), \qquad
  &\widetilde\cW =\,& L^2( (0,T);H^1(\Omega)) \cap H^1( (0,T);H^{-1}(\Omega)).
\end{alignat*}

We recall the following well-known integration by parts result, see, e.g.~\cite[Section~5.9, Theorem~3]{Evans98}.
\begin{theorem}[embedding, integration by parts]\label{thm:Bochner_space_prop}
Space $\cW$ is continuously embedded in the space $C^0([0,T];L^{2}(\Omega))$.
Moreover, the integration by parts formula 
\begin{align*}
\int_{0}^{T}\dual{\partial_{t}v(t)}{w(t)}\mathrm{d}t = - \int_{0}^{T}\dual{\partial_{t}w(t)}{v(t)}\mathrm{d}t + (v(T),w(T))_{L^{2}(\Omega)} - (v(0),w(0))_{L^{2}(\Omega)}
\end{align*}
holds true for all $v, w \in \cW$.
\end{theorem}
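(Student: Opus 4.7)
The plan is to follow the classical density argument that underlies the result in Evans. Let $\cW$ be equipped with the natural graph norm $\|v\|_{\cW}^2 := \|v\|_{L^2((0,T);H_0^1(\Omega))}^2 + \|\partial_t v\|_{L^2((0,T);H^{-1}(\Omega))}^2$. The first thing I would establish is that smooth functions, say functions in $C^\infty([0,T];H_0^1(\Omega))$, are dense in $\cW$. This can be done by a standard mollification in time combined with a cutoff/extension across $\{0,T\}$ (e.g.\ by reflection), exploiting the fact that $H_0^1(\Omega) \hookrightarrow L^2(\Omega) \hookrightarrow H^{-1}(\Omega)$ form a Gelfand triple with continuous and dense embeddings.

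Next, for smooth $v$ the map $t \mapsto \|v(t)\|_{L^2(\Omega)}^2$ is $C^1$ with derivative $2\dual{\partial_t v(t)}{v(t)}$, since for smooth functions the duality pairing agrees with the $L^2$ inner product and the chain rule applies. Integrating between $s$ and $t$ and using Cauchy--Schwarz together with Young's inequality yields
\begin{align*}
\bigl|\|v(t)\|_{L^2(\Omega)}^2 - \|v(s)\|_{L^2(\Omega)}^2\bigr|
&\leq 2\int_s^t \|\partial_t v(\tau)\|_{H^{-1}(\Omega)}\|v(\tau)\|_{H_0^1(\Omega)}\,\di\tau \\
&\leq \|\partial_t v\|_{L^2((0,T);H^{-1}(\Omega))}^2 + \|v\|_{L^2((0,T);H_0^1(\Omega))}^2.
\end{align*}
Averaging the identity for $\|v(t)\|_{L^2(\Omega)}^2$ over $s\in(0,T)$ bounds the value at some intermediate time by $C\|v\|_{\cW}^2$, and combining with the previous display gives $\sup_{t\in[0,T]}\|v(t)\|_{L^2(\Omega)} \lesssim \|v\|_{\cW}$. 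Thus the map $v \mapsto v$ from (smooth representatives in) $\cW$ into $C^0([0,T];L^2(\Omega))$ is a bounded linear map, which by density extends to all of $\cW$, giving the claimed continuous embedding.

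For the integration-by-parts formula, I would first verify it directly for smooth $v,w$: the product rule gives $\frac{\di}{\di t}\vdual{v(t)}{w(t)}_{L^2(\Omega)} = \dual{\partial_t v(t)}{w(t)} + \dual{\partial_t w(t)}{v(t)}$, and integration from $0$ to $T$ gives exactly the stated identity. To pass to the limit for general $v,w\in\cW$, I would approximate by smooth $v_n,w_n \to v,w$ in $\cW$. The left-hand side and the first term on the right are continuous bilinear forms on $\cW\times\cW$ by duality, while the boundary terms pass to the limit thanks to the continuous embedding $\cW \hookrightarrow C^0([0,T];L^2(\Omega))$ just established.

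The main obstacle is the density step: constructing smooth approximants in the combined norm of $\cW$ while respecting the homogeneous spatial boundary condition (encoded in $H_0^1(\Omega)$) and avoiding boundary artifacts at $t=0,T$ when mollifying in time. Once density is in hand, the energy identity and the integration-by-parts statement both follow by essentially the same limiting argument sketched above.
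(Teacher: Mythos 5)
Your proposal is correct and is essentially the classical density-plus-mollification argument from Evans, Section 5.9, Theorem 3, which is precisely the reference the paper cites for this result (the paper itself gives no independent proof). The one point you flag as the main obstacle—density of smooth functions in the graph norm of $\cW$—is indeed the technical heart of the argument, and your outline (extension across $t=0,T$, mollification in time, using the Gelfand triple structure) is the standard way to fill it.
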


For $v \in L^2(Q)$ we write $v\geq 0$ if the inequality holds a.e. in $Q$. 
For elements $\mu\in \cV^*$ we define $\mu\geq 0$ by $\dual{\mu}{v}_{\cV^*\times \cV}\geq 0$ for all $v\in\cV$ with $v\geq 0$.
Furthermore, $v\geq w$ means $v-w\geq 0$.
The positive and negative part of a function $v\in L^2(Q)$ are denoted with $v^+ = \max\{v,0\}$ and $v^- = -\min\{v,0\}$, respectively. Here, $\max\{\cdot,\cdot\}$ and $\min\{\cdot,\cdot\}$ are defined pointwise. 
We note that $w^+ = \max\{0,w\}\in\cV$ for all $w\in L^2( (0,T);H^1(\Omega))$ with $w|_{(0,T)\times\partial\Omega}\leq 0$.

Note that any $\mu\in \cV^*$ with $\mu\geq 0$ induces a (unique) non-negative Radon measure (also denoted by $\mu$) with
\begin{align*}
  \int_Q v \,\mathrm{d}\mu = \dual{\mu}{v}_{\cV^*\times \cV} \quad\forall v \in \cV\cap C^0(\overline{Q}).
\end{align*}
With a slight abuse of notation, we will write $\dual{\mu}{g}_{\cV^*\times \cV} = \dual{g}{\mu}_{\cV\times \cV^*}$ instead of $\int_Qg \,\di\mu$.

We close this section by noting that $\divx\colon L^2(Q)^d\to \cV^*$ is a linear operator with
\begin{align*}
  \|\divx\bsigma\|_{\cV^*} = \sup_{v\in \cV\setminus\{0\}} \frac{|\dual{\divx\bsigma}{v}_{\cV^*\times \cV}|}{\|v\|_{\cV}}
  = \sup_{v\in \cV\setminus\{0\}} \frac{|\ip{\bsigma}{\gradx v}_{L^2(Q)}|}{\|v\|_{\cV}} \leq \|\bsigma\|_{L^2(Q)} 
\end{align*}
for all $\bsigma\in L^2(Q)^d$.
Let $0<c_F\leq \diam(\Omega)$ denote Friedrich's constant, i.e., the smallest constant $c_F>0$ such that
\begin{align}\label{eq:friedrich}
  \|v\|_{L^2(Q)}\leq c_F \|\gradx v\|_{L^2(Q)} \quad\text{for all } v\in\cV.
\end{align}
Consequently, $\|\phi\|_{L^2( (0,T);H^{-1}(\Omega))}\leq c_F \|\phi\|_{L^2(Q)}$ for all $\phi\in L^2(Q)$.

\subsection{Well-posedness of canonic variational formulation}
We consider a canonic variational space-time formulation of problem~\eqref{eq:model} and state its well-posedness as given in~\cite{CharrierTroianello78}, see Theorem~\ref{thm:weakform} below.
To that end we impose some rather mild assumptions on operator $\cL$. 
Suppose that $\bA^\top = \bA\in L^\infty(Q)^{d\times d}$, $\bb\in L^\infty(Q)^d$, $c\in L^\infty(Q)$ with $L^\infty(Q)\ni-\tfrac12\divx\bb + c\geq 0$ such that there exists $\alpha>0$ with
\begin{alignat}{2}\label{eq:alpha}
\by\cdot \bA\by &\geq \alpha |\by|^2 &\quad& \forall \by\in \R^d, \quad \text{a.e. in } Q. 
\end{alignat}
The last inequality and $-\tfrac12\divx\bb + c\geq 0$ imply that 
\begin{align}\label{eq:coerc_L}
  \dual{\cL v}{v}_{\cV^*\times \cV} \geq \alpha \|v\|_{\cV}^2 \quad\forall v\in \cV.
\end{align}

Let $f\in \cV^*$, $u_0\in L^2(\Omega)$ be given. 
For the obstacle function $g\in \widetilde\cW\cap C^0(\overline Q)$ we assume that
\begin{subequations}\label{eq:assumptionsObs}
\begin{align}
  g|_{(0,T)\times \partial\Omega} \leq 0, 
  \quad
  g(0)\leq u_0.
\end{align}
In addition, we define $\psi := \partial_t g + \cL g -f \in \cV^*$ and suppose that the decomposition $\psi = \psi^+-\psi^-$ holds with
\begin{align}
  0\leq \psi^\pm \in \cV^*.
\end{align}
\end{subequations}
Hence, with the non-empty convex and closed (in $\cV$) set 
\begin{align*}
  \cK_\cV = \set{v\in \cV}{v\geq g},
\end{align*}
the variational formulation of~\eqref{eq:model} reads as follows:
\begin{align}\label{eq:weakform}
\begin{split}
  \text{find } u\in \cW\cap \cK_\cV \text{ such that }
  \dual{\partial_t u + \cL u}{v-u}_{\cV^*\times \cV} &\geq \dual{f}{v-u}_{\cV^*\times \cV} \quad\forall v\in \cK_\cV, \\
  u(0) &= u_0.
\end{split}
\end{align}

\begin{theorem}[{\cite[Theorem~1 and Remark~2]{CharrierTroianello78}}]\label{thm:weakform}
  Problem~\eqref{eq:weakform} admits a unique solution.
\qed
\end{theorem}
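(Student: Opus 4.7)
My plan follows the classical penalization approach of~\cite{Brezis72,CharrierTroianello78}: build a family of solutions to unconstrained semilinear parabolic problems that approximate~\eqref{eq:weakform}, derive uniform a priori bounds exploiting the decomposition $\psi=\psi^+-\psi^-$, pass to the limit, and finally handle uniqueness by a direct energy estimate.

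For $\eps>0$ I would introduce the penalized problem: find $u_\eps\in\cW$ with $u_\eps(0)=u_0$ and
\begin{align*}
\dual{\partial_t u_\eps+\cL u_\eps}{v}_{\cV^*\times\cV}-\tfrac1\eps\ip{(u_\eps-g)^-}{v}_{L^2(Q)}=\dual{f}{v}_{\cV^*\times\cV}\quad\forall v\in\cV.
\end{align*}
Since $w\mapsto -\tfrac1\eps(w-g)^-$ is monotone and globally Lipschitz on $L^2(Q)$, existence and uniqueness of $u_\eps\in\cW$ follow from standard monotone parabolic theory, e.g., Galerkin approximation in an $L^2$-orthonormal basis of $H_0^1(\Omega)$ combined with Lions--Aubin compactness and the integration-by-parts formula of Theorem~\ref{thm:Bochner_space_prop}. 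Note that $(u_\eps-g)^-\in\cV$, because $u_\eps|_{\partial\Omega}=0$ and $g|_{(0,T)\times\partial\Omega}\leq 0$ force $(u_\eps-g)^-$ to vanish on the lateral boundary.

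The crucial step is a uniform-in-$\eps$ bound. Subtracting the identity defining $\psi$ from the penalized equation, testing with $v=(u_\eps-g)^-\in\cV$, applying~\eqref{eq:coerc_L}, the chain rule together with the initial condition $(u_\eps-g)^-(0)=0$ (thanks to $g(0)\leq u_0$), and using the sign information $\dual{\psi^-}{(u_\eps-g)^-}\geq 0$ (which appears with a favorable sign), yields
\begin{align*}
\tfrac12\|(u_\eps-g)^-(T)\|_{L^2(\Omega)}^2+\alpha\|(u_\eps-g)^-\|_\cV^2+\tfrac1\eps\|(u_\eps-g)^-\|_{L^2(Q)}^2\leq \dual{\psi^+}{(u_\eps-g)^-}_{\cV^*\times\cV}.
\end{align*}
A Young-type inequality on the right then delivers $\|(u_\eps-g)^-\|_{L^2(Q)}=\OO(\sqrt\eps)$ together with a uniform $\cV$-bound on $(u_\eps-g)^-$. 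A companion test with $u_\eps$ itself bounds $\|u_\eps\|_\cW$ uniformly, and the identity $\lambda_\eps:=\tfrac1\eps(u_\eps-g)^-=f-\partial_t u_\eps-\cL u_\eps$ then controls $\lambda_\eps$ in $\cV^*$ uniformly in $\eps$.

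Weak compactness extracts subsequences $u_\eps\rightharpoonup u$ in $\cW$, $u_\eps\to u$ in $L^2(Q)$ (Lions--Aubin), and $\lambda_\eps\rightharpoonup\lambda\geq 0$ in $\cV^*$; the strong convergence of $(u_\eps-g)^-$ to $0$ forces $u\geq g$. Testing the penalized equation with $v-u_\eps$ for arbitrary $v\in\cK_\cV$, using $\ip{(u_\eps-g)^-}{v-u_\eps}_{L^2(Q)}\geq-\|(u_\eps-g)^-\|_{L^2(Q)}^2\to 0$, and handling $\int_0^T\dual{\partial_tu_\eps+\cL u_\eps}{u_\eps}\,\mathrm{d}t$ by a lim-inf argument via Theorem~\ref{thm:Bochner_space_prop}, recovers~\eqref{eq:weakform}; the initial condition is preserved by the embedding $\cW\hookrightarrow C([0,T];L^2(\Omega))$. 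Uniqueness is the shortest step: given two solutions $u_1,u_2$, testing each variational inequality with the other, adding, and applying Theorem~\ref{thm:Bochner_space_prop} with $u_1(0)=u_2(0)$ together with~\eqref{eq:coerc_L}, yields $\tfrac12\|u_1(T)-u_2(T)\|_{L^2(\Omega)}^2+\alpha\|u_1-u_2\|_\cV^2\leq 0$. The main obstacle in the whole program is the uniform $\cV^*$-bound on $\lambda_\eps$; precisely at this step the decomposition $\psi=\psi^+-\psi^-$ into non-negative parts and the compatibility conditions $g(0)\leq u_0$, $g|_{(0,T)\times\partial\Omega}\leq 0$ of~\eqref{eq:assumptionsObs} are indispensable.
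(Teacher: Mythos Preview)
The paper does not give its own proof of Theorem~\ref{thm:weakform}; it simply invokes~\cite[Theorem~1 and Remark~2]{CharrierTroianello78} and closes with a \qed. Your proposal is a correct and faithful outline of precisely the penalization argument in that reference, including the decisive use of the decomposition $\psi=\psi^+-\psi^-$ and the compatibility conditions~\eqref{eq:assumptionsObs} to obtain the uniform bound on $(u_\eps-g)^-$, so there is nothing to compare against beyond noting that you have supplied what the paper deliberately omits.
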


\begin{remark}
  Note that Theorem~\ref{thm:weakform} is valid if $\cL$ satisfies the more general estimate
  \begin{align*}
    \dual{\cL v}{v}_{\cV^*\times \cV} \geq \beta\|v\|_{\cV}^2 - \gamma \|v\|_{L^2(Q)}^2 \quad\forall v\in \cV, 
  \end{align*}
  for some constants $\beta>0$, $\gamma\geq 0$, cf.~\cite{CharrierTroianello78}.
\qed
\end{remark}

\subsection{First-order system and variational formulation}\label{sec:fo}
In this section we derive an equivalent first-order system of~\eqref{eq:model}, state a variational formulation thereof, and prove its well-posedness. Given $(v,\btau)\in L^{2}(Q)^{1+d}$ define the (distributional) time-space divergence operator $\divst(v,\btau) := \partial_{t}v + \div_{\bx}\btau$.
Let $u\in \cV$ denote the weak solution of~\eqref{eq:weakform} with $f\in L^2(Q)$.
Set $\bsigma := -\bA\gradx u \in L^2(Q)^d$ and
\begin{align*}
  \lambda:=\partial_t u + \cL u -f = \divst(u,\bsigma)+\bb\cdot\gradx u + cu -f\in \cV^*.
\end{align*}
Then, the triplet $(u,\bsigma,\lambda)$ satisfies the first-order system given by
\begin{subequations}\label{eq:fo}
\begin{alignat}{2}
  \divst(u,\bsigma)+\bb\cdot\gradx u+cu-\lambda &= f &\quad&\text{in } Q, \label{eq:fo:a}\\
  \bsigma + \bA\gradx u &= 0 &\quad&\text{in } Q, \label{eq:fo:b} \\
  u\geq g, \quad \lambda\geq 0, \quad \lambda(u-g) &= 0 &\quad&\text{in } Q,\label{eq:fo:c} \\
  u &= 0 &\quad&\text{on } (0,T)\times \partial\Omega, \\
  u(0) &= u_0  &\quad&\text{in } \Omega \label{eq:fo:e}.
\end{alignat}
\end{subequations}
Based on this system we derive a novel variational formulation. 
To do so, we first need to find an appropriate functional analytic setting.
We set
\begin{align*}
  \cU &= \set{(v,\btau,\mu)\in \cV\times L^2(Q)^d \times \cV^*}{\divst(v,\btau)-\mu\in L^2(Q)}.
\end{align*}
This definition is motivated by the fact that if $(u,\bsigma,\lambda)\in \cV\times L^2(Q)^d\times \cV^*$ satisfies~\eqref{eq:fo:a}, then $\divst(u,\bsigma)-\lambda = f-\bb\cdot\gradx u -cu \in L^2(Q)$.
We stress that $\cU$ is a Hilbert space when equipped with the graph norm, denoted by $\|\cdot\|_\cU$.
However, the latter norm is too strong and we need to work with a weaker norm.
First, observe that the definition of $\cU$ implies that if $(u,\bsigma,\lambda)\in \cU$, then, $\divst(u,\bsigma)-\lambda\in L^2(Q)$.
Therefore, $\partial_t u = -\divx\bsigma+\lambda + w \in \cV^*$ for some $w\in L^2(Q)$, thus, $u\in\cW$.
Define $|\cdot|_\cU\colon \cU\to \R_{\geq 0}$ for $(u,\bsigma,\lambda)\in\cU$ by
\begin{align*}
  |(u,\bsigma,\lambda)|_{\cU}^2 &:= \|u\|_{\cV}^2 + \|u(0)\|_{L^2(\Omega)}^2 +  \|u(T)\|_{L^2(\Omega)}^2 + \|\bsigma\|_{L^2(Q)}^2 + \|\divst(u,\bsigma)-\lambda\|_{L^2(Q)}^2.
\end{align*}
To see that the latter is a norm, we note that homogeneity and the triangle inequality are trivial to verify. 
For definiteness, let $(u,\bsigma,\lambda)\in\cU$ be given and assume that $|(u,\bsigma,\lambda)|_\cU = 0$. 
Then, $u=0$, $\bsigma = 0$, and $\divst(u,\bsigma)-\lambda = 0$. 
Therefore,  $\lambda = \divst(u,\bsigma)=0$. 
Norm $|\cdot|_\cU$ can be bounded from above by the canonic norm $\|\cdot\|_{\cU}$, i.e., 
\begin{align*}
  |(u,\bsigma,\lambda)|_{\cU}^2 &= \|u\|_{\cV}^2 + \|u(0)\|_{L^2(\Omega)}^2 +  \|u(T)\|_{L^2(\Omega)}^2 + \|\bsigma\|_{L^2(Q)}^2 + \|\divst(u,\bsigma)-\lambda\|_{L^2(Q)}^2
  \\
  &\lesssim \|u\|_{\cV}^2 + \|\partial_t u\|_{\cV^*}^2 + \|\bsigma\|_{L^2(Q)}^2 + \|\divst(u,\bsigma)-\lambda\|_{L^2(Q)}^2 \\
  &\lesssim \|u\|_{\cV}^2 + \|\divst(u,\bsigma)-\lambda\|_{\cV^*}^2 +\|\lambda\|_{\cV^*}^2 + \|\divx\bsigma\|_{\cV^*}^2 
  \\
  &\qquad + \|\bsigma\|_{L^2(Q)}^2 + \|\divst(u,\bsigma)-\lambda\|_{L^2(Q)}^2
  \\
  &\lesssim \|u\|_{\cV}^2 + \|\bsigma\|_{L^2(Q)}^2 + \|\lambda\|_{\cV^*}^2 + \|\divst(u,\bsigma)-\lambda\|_{L^2(Q)}^2
 = \|(u,\bsigma,\lambda)\|_{\cU}^2
\end{align*}
for all $(u,\bsigma,\lambda)\in \cU$. Here, we have used the embedding $\cW\hookrightarrow C^0([0,T];L^2(\Omega))$ from Theorem \ref{thm:Bochner_space_prop}, boundedness of $\divx\colon L^2(Q)^d\to \cV^*$, and $\|\cdot\|_{\cV^*}\lesssim \|\cdot\|_{L^2(Q)}$.
The fact that $|\cdot|_\cU$ and $\|\cdot\|_\cU$ are not equivalent can be seen from the following simple construction: Let $w\in H_0^1(\Omega)\setminus\{0\}$ and define for each $n\in\N$, $(u_n,\bsigma_n,\lambda_n)\in \cU$ by $u_n(t,\bx) = n^{-1/2}(t/T)^n w(\bx)$ for all $(t,\bx)\in Q$, $\bsigma_n = 0$, and $\lambda_n = \partial_t u_n$. A straightforward computation yields that there exists $C=C(w)>0$ with
\begin{align*}
  |(u_n,\bsigma_n,\lambda_n)|_\cU \to 0, \qquad 
  \|(u_n,\bsigma_n,\lambda_n)\|_\cU \to C \quad\text{for } n\to\infty.
\end{align*}
Particularly, $|(u,\bsigma,\lambda)|_{\cU}$ does not control the time derivative of $u$ in $\cV^*$.

Let $0<c_F\leq \diam(\Omega)$ denote Friedrich's constant~\eqref{eq:friedrich} and recall that $\alpha>0$ is the lower bound on the eigenvalues of $\bA$, see~\eqref{eq:alpha}. 
Fix some $\Lambda\geq c_F^2$, e.g., $\Lambda = \diam(\Omega)^2$ and 
define the bilinear form $\blfsym\colon \cU\times \cU\to \R$, 
\begin{align*}
  \blfsym(\bu,\bv) &:=  \alpha^{-1} \Lambda \ip{\divst(u,\bsigma)+\bb\cdot\gradx u + cu-\lambda}{\divst(v,\btau)+\bb\cdot\gradx v + cv-\mu}_{L^2(Q)} 
  \\
  &\qquad\qquad + \ip{\bA^{-1/2}\bsigma+\bA^{1/2}\gradx u}{\bA^{-1/2}\btau+\bA^{1/2}\gradx v}_{L^2(Q)}  \\
  &\qquad\qquad + \ip{u(0)}{v(0)}_{L^2(\Omega)} + \frac12\dual{\lambda}{v}_{\cV^*\times \cV} + \frac12\dual{\mu}{u}_{\cV^*\times \cV},
\end{align*}
for all $\bu=(u,\bsigma,\lambda)$, $\bv = (v,\btau,\mu)\in \cU$.

Recall that $g\in C^0(\overline Q)$ with $g|_{(0,T)\times\partial\Omega}\leq 0$, thus, $\int_Q g\,\di \mu \leq \|\max\{g,0\}\|_\cV\|\mu\|_{\cV^*}<\infty$ for $\mu\in \cV^*$ with $\mu\geq 0$. 
For any $\bv=(v,\btau,\mu)\in\cU$ with $\mu\geq 0$ set
\begin{align*}
  \rhssym(\bv) &:= \alpha^{-1}\Lambda \ip{f}{\divst(v,\btau)+\bb\cdot\gradx v + cv-\mu}_{L^2(Q)} + \ip{u_0}{v(0)}_{L^2(\Omega)} + \frac12\int_Q g\,\di \mu.
\end{align*}

We derive a variational formulation of the first-order reformulation~\eqref{eq:fo} of the obstacle problem.
To that end we introduce the set
\begin{align*}
  \cKsym &= \set{(v,\btau,\mu)\in \cU}{v\in \cK_\cV,\,\mu\geq 0},
\end{align*}
which is non-empty and convex.
Suppose that $(u,\bsigma,\lambda)\in \cKsym$ is a solution of~\eqref{eq:fo}. 
Let $(v,\btau,\mu)\in\cKsym$ be given. By~\eqref{eq:fo:c} it follows that
$\dual{\lambda}{u-g}_{\cV^*\times\cV}=0$, $\dual{\mu}{u-g}_{\cV^*\times\cV}\geq 0$, and $\dual{\lambda}{v-g}_{\cV^*\times\cV}\geq 0$. 
Combining the latter observations yields
\begin{align}\label{eq:obsinequalities}
  \frac12\dual{\mu-\lambda}{u-g}_{\cV^*\times\cV} \geq 0, \qquad
  \frac12\dual\lambda{v-u}_{\cV^*\times \cV} = \frac12\dual{\lambda}{v-g}_{\cV^*\times\cV} \geq 0.
\end{align}
Then,~\eqref{eq:fo:a}-\eqref{eq:fo:b} and~\eqref{eq:fo:e} imply for any $(w,\bchi,\nu)\in \cU$ that
\begin{align}\label{eq:fovarres}
  \begin{split}
  &\frac{\Lambda}{\alpha}\ip{\divst(u,\bsigma)+\bb\cdot\gradx u+cu-\lambda}{\divst(w,\bchi)+\bb\cdot\gradx w+cw-\nu}_{L^2(Q)}
  \\
  &\qquad +\ip{\bA^{1/2}\gradx u+\bA^{-1/2}\bsigma}{\bA^{1/2}\gradx w + \bA^{-1/2}\bchi}_{L^2(Q)}
  + \ip{u(0)}{w(0)}_{L^2(\Omega)} \\
  &\qquad\qquad = \frac{\Lambda}{\alpha}\ip{f}{\divst(w,\bchi)+\bb\cdot\gradx w+cw-\nu}_{L^2(Q)} + \ip{u_0}{w(0)}_{L^2(\Omega)}.
\end{split}
\end{align}
Adding~\eqref{eq:fovarres} with $(w,\bchi,\nu) = (v-u,\btau-\bsigma,\mu-\lambda)$ and~\eqref{eq:obsinequalities} shows that any solution $\bu=(u,\bsigma,\lambda)\in\cKsym$ of~\eqref{eq:fo} satisfies the following variational inequality:
\begin{align}\label{eq:vi:sym}
  \text{Find } \bu\in \cKsym: \quad
  \blfsym(\bu,\bv-\bu) \geq \rhssym(\bv-\bu) \quad\forall \bv\in \cKsym.
\end{align}

In what follows we show well-posedness of variational inequality~\eqref{eq:vi:sym}, but first we prove boundedness and coercivity of the bilinear form $\blfsym(\cdot,\cdot)$.

\begin{lemma}\label{lem:propblfsym}
  Bilinear form $\blfsym(\cdot,\cdot)$ is symmetric and bounded with respect to $|\cdot|_\cU$, i.e.,
  \begin{align*}
    \blfsym(\bu,\bv) = \blfsym(\bv,\bu), \qquad |\blfsym(\bu,\bv)|\lesssim |\bu|_\cU |\bv|_\cU \quad\forall \bu,\bv\in \cU.
  \end{align*}
  Furthermore, $\blfsym(\cdot,\cdot)$ is $|\cdot|_\cU$-coercive, i.e.,
  \begin{align*}
    |\bu|_\cU^2 \lesssim \blfsym(\bu,\bu) \quad\forall \bu\in \cU.
  \end{align*}
\end{lemma}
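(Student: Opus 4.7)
Symmetry is immediate: each of the four additive parts of $\blfsym$ is either a symmetric $L^2$-inner product ($T_1$, $T_2$, $T_3$) or the explicitly symmetrized combination $\tfrac12\dual{\lambda}{v}+\tfrac12\dual{\mu}{u}$ ($T_4$). So I would only need to work on boundedness and coercivity, both with respect to $|\cdot|_\cU$.

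For boundedness, Cauchy--Schwarz handles the first three terms once we observe that
\begin{align*}
  \|\divst(u,\bsigma)+\bb\cdot\gradx u+cu-\lambda\|_{L^2(Q)}
  \le \|\divst(u,\bsigma)-\lambda\|_{L^2(Q)} + (\|\bb\|_{L^\infty}+c_F\|c\|_{L^\infty})\|\gradx u\|_{L^2(Q)}
  \lesssim |\bu|_\cU,
\end{align*}
and similarly $\|\bA^{-1/2}\bsigma+\bA^{1/2}\gradx u\|_{L^2(Q)}\lesssim |\bu|_\cU$ and $\|u(0)\|_{L^2(\Omega)}\le|\bu|_\cU$. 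The delicate step is $T_4$, because $\|\lambda\|_{\cV^*}$ is not controlled by $|\bu|_\cU$, as illustrated by the construction just above the lemma. My plan is to write $\dual\lambda v = \ip{\lambda-\divst(u,\bsigma)}{v}_{L^2(Q)} + \dual{\partial_t u}{v} - \ip{\bsigma}{\gradx v}_{L^2(Q)}$, apply the analogous identity for $\dual\mu u$, and then symmetrize using Theorem~\ref{thm:Bochner_space_prop} to obtain
\begin{align*}
  \tfrac12\dual\lambda v+\tfrac12\dual\mu u
  &= \tfrac12\ip{\lambda-\divst(u,\bsigma)}{v}_{L^2(Q)}+\tfrac12\ip{\mu-\divst(v,\btau)}{u}_{L^2(Q)} \\
  &\quad+\tfrac12(u(T),v(T))_{L^2(\Omega)}-\tfrac12(u(0),v(0))_{L^2(\Omega)}
  -\tfrac12\ip{\bsigma}{\gradx v}_{L^2(Q)}-\tfrac12\ip{\btau}{\gradx u}_{L^2(Q)}.
\end{align*}
Each term on the right-hand side is a duality pairing of quantities already captured by $|\cdot|_\cU$ (using Friedrich's inequality~\eqref{eq:friedrich} for the first two), which yields $|T_4|\lesssim |\bu|_\cU|\bv|_\cU$.

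For coercivity I take $\bv=\bu$, denote $R:=\divst(u,\bsigma)+\bb\cdot\gradx u+cu-\lambda$, and apply the same integration-by-parts idea to the single remaining duality pairing:
\begin{align*}
  \dual\lambda u
  = \ip{\bb\cdot\gradx u+cu-R}{u}_{L^2(Q)} + \tfrac12\|u(T)\|_{L^2(\Omega)}^2 - \tfrac12\|u(0)\|_{L^2(\Omega)}^2 - \ip{\bsigma}{\gradx u}_{L^2(Q)}.
\end{align*}
Expanding $\|\bA^{-1/2}\bsigma+\bA^{1/2}\gradx u\|_{L^2(Q)}^2$ produces an extra $+2\ip\bsigma{\gradx u}_{L^2(Q)}$ which combines with the $-\ip\bsigma{\gradx u}_{L^2(Q)}$ coming from the IBP; I would then absorb the remaining single cross term via $|\ip\bsigma{\gradx u}_{L^2(Q)}|\le \tfrac12\|\bA^{-1/2}\bsigma\|^2+\tfrac12\|\bA^{1/2}\gradx u\|^2$. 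The assumption $-\tfrac12\divx\bb+c\ge0$ (after integrating by parts in space, which is legitimate because $u(t)\in H_0^1(\Omega)$) kills $\ip{\bb\cdot\gradx u+cu}{u}_{L^2(Q)}\ge0$. This leaves the mismatched term $-\ip{R}{u}_{L^2(Q)}$, which I estimate via Young with parameter $\delta$ and Friedrich: $|\ip{R}{u}_{L^2(Q)}|\le \tfrac{1}{2\delta}\|R\|^2+\tfrac{\delta c_F^2}{2}\|\gradx u\|^2$, and I pick any $\delta\in(\alpha/(2\Lambda),\alpha/c_F^2)$, which is a nonempty interval precisely because $\Lambda\ge c_F^2$.

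The main obstacle is the interplay of the two critical terms in the coercivity argument: $\alpha^{-1}\Lambda\|R\|^2$ must dominate the Young residual $\tfrac{1}{2\delta}\|R\|^2$ while $\tfrac12\|\bA^{1/2}\gradx u\|^2$ must dominate $\tfrac{\delta c_F^2}{2}\|\gradx u\|^2$, and it is exactly the calibration constant $\Lambda\ge c_F^2$ that makes both inequalities simultaneously feasible. All remaining steps are routine once this balance is chosen. Finally, I would translate the coercivity lower bound on $\|R\|^2+\|\bsigma\|^2+\|\gradx u\|^2+\|u(0)\|^2+\|u(T)\|^2$ into one on $|\bu|_\cU^2$ by bounding $\|\divst(u,\bsigma)-\lambda\|_{L^2(Q)}^2\lesssim\|R\|^2+\|u\|_\cV^2$.
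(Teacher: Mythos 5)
Your proposal is correct and follows essentially the same route as the paper: decompose the duality pairing via the $L^2$-controlled residual $\lambda-\divst(u,\bsigma)$ and integration by parts in time (Theorem~\ref{thm:Bochner_space_prop}), absorb cross terms with Young/Cauchy--Schwarz and Friedrich's inequality, and exploit $\Lambda\geq c_F^2$ together with $-\tfrac12\divx\bb+c\geq0$ to close the coercivity estimate. The paper organizes the coercivity computation a bit differently (chaining through $\dual{\divx\btau-\mu}{-v}$ rather than expanding $\dual{\lambda}{u}$ directly), but the ingredients, constants, and conclusion are identical.
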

\begin{proof}
  Symmetry can be seen from the definition. Next, we verify boundedness. 
  To that end let $\bu = (u,\bsigma,\lambda)$, $\bv = (v,\btau,\mu)\in\cU$ be given and observe that
  \begin{align*}
    |\blfsym(\bu,\bv)| &\lesssim \|\divst(u,\bsigma)+\bb\cdot\gradx u + cu-\lambda\|_{L^2(Q)}
                             \\ &\qquad\qquad \times  \|\divst(v,\btau)+\bb\cdot\gradx v + cv-\mu\|_{L^2(Q)}
                             \\
                       & \qquad + \|\bA^{1/2}\gradx u+\bA^{-1/2}\bsigma\|_{L^2(Q)}\|\bA^{1/2}\gradx v+\bA^{-1/2}\btau\|_{L^2(Q)} 
                       \\
                     & \qquad + \|u(0)\|_{L^2(\Omega)}\|v(0)\|_{L^2(\Omega)} 
                     + \frac12|\dual{\mu}{u}_{\cV^*\times \cV} + \dual{\lambda}{v}_{\cV^*\times \cV}|
                     \\
                     & \lesssim |\bu|_{\cU}|\bv|_{\cU} 
                     + \frac12|\dual{\mu}{u}_{\cV^*\times \cV} + \dual{\lambda}{v}_{\cV^*\times \cV}|.
  \end{align*}
  Here, we applied the Cauchy--Schwarz inequality and triangle inequality.
  To tackle the last term on the right-hand side, note that
  \begin{align*}
    \dual{\mu}{u}_{\cV^*\times \cV} + \dual{\lambda}{v}_{\cV^*\times \cV}
   = & \, \dual{\divst(v,\btau)-\mu}{-u}_{\cV^*\times \cV} + \dual{\divst(u,\bsigma)-\lambda}{-v}_{\cV^*\times \cV}
  \\
  &+ \dual{\partial_t v}{u}_{\cV^*\times\cV} + \dual{\partial_t u}{v}_{\cV^*\times\cV}
  + \dual{\divx\btau}u_{\cV^*\times\cV} + \dual{\divx\bsigma}v_{\cV^*\times\cV}.
  \end{align*}
  Then, using boundedness of $\divx\colon L^2(Q)^d\to \cV^*$, $\|\cdot\|_{\cV^*}\lesssim \|\cdot\|_{L^2(Q)}$, and integration in time we get that
  \begin{align*}
    |\dual{\mu}{u}_{\cV^*\times \cV} + \dual{\lambda}{v}_{\cV^*\times \cV}| &\lesssim |\bu|_{\cU}|\bv|_{\cU} + |\dual{\partial_t v}{u}_{\cV^*\times\cV} + \dual{\partial_t u}{v}_{\cV^*\times\cV}|
    \\
    &= |\bu|_{\cU}|\bv|_{\cU} + |\ip{u(T)}{v(T)}_{L^2(\Omega)}-\ip{u(0)}{v(0)}_{L^2(\Omega)}| \lesssim |\bu|_{\cU}|\bv|_{\cU}.
  \end{align*}
  Putting all estimates together we conclude boundedness of $\blfsym(\cdot,\cdot)$.

It remains to show coercivity. To that end, let $\bv = (v,\btau,\mu)\in \cU$ be given and note that
  \begin{align*}
    \blfsym(\bv,\bv) &= \frac{\Lambda}{\alpha} \|\divst(v,\btau)+\bb\cdot\gradx v + cv - \mu\|_{L^2(Q)}^2 + \|v(0)\|_{L^2(\Omega)}^2 
     \\
     &\qquad\qquad + \|\bA^{-1/2}\btau + \bA^{1/2}\gradx v\|_{L^2(Q)}^2+ \dual{\mu}v_{\cV^*\times \cV}.
  \end{align*}
  Then, Young's inequality and integration by parts prove
  \begin{align*}
  \|\bA^{-1/2}\btau+\bA^{1/2}\gradx v\|_{L^2(Q)}^2 &= \|\bA^{-1/2}\btau\|_{L^2(Q)}^2 + 2\ip{\bA^{-1/2}\btau}{\bA^{1/2}\gradx v}_{L^2(Q)} + \|\bA^{1/2}\gradx v\|_{L^2(Q)}^2 \\
&= \|\bA^{-1/2}\btau\|_{L^2(Q)}^2 + \ip{\bA^{-1/2}\btau}{\bA^{1/2}\gradx v}_{L^2(Q)} + \ip{\btau}{\gradx v}_{L^2(Q)}\\
&\qquad + \|\bA^{1/2}\gradx v\|_{L^2(Q)}^2 \\
  &\geq \frac14 \|\bA^{-1/2}\btau\|_{L^2(Q)}^2 + \frac23 \|\bA^{1/2}\gradx v\|_{L^2(Q)}^2 + \dual{\divx\btau}{-v}_{\cV^*\times \cV}.
  \end{align*}
  Since $-\tfrac12\divx\bb + c \geq 0$, we have that $\ip{\bb\cdot\gradx v + cv}v_{L^2(Q)}\geq 0$. Using this we obtain that
  \begin{align*}
    \dual{\divx\btau-\mu}{-v}_{\cV^*\times \cV} 
    &= \dual{\divx\btau + \bb\cdot\gradx v+cv-\mu}{-v}_{\cV^*\times \cV} + \ip{\bb\cdot\gradx v+cv}{v}_{L^2(Q)} 
    \\
    &\geq \dual{\divx\btau + \bb\cdot\gradx v+cv-\mu}{-v}_{\cV^*\times \cV} 
    \\
    &= \dual{\divst(v,\btau)+\bb\cdot\gradx v+cv-\mu}{-v}_{\cV^*\times \cV} + \dual{\partial_t v}{v}_{\cV^*\times \cV}
    \\
    &\geq -\|\divst (v,\btau) + \bb\cdot\gradx v+cv-\mu\|_{L^2(Q)}\|v\|_{L^2(Q)} + \dual{\partial_t v}{v}_{\cV^*\times \cV}.
  \end{align*}
  With Friedrich's inequality, $c_F\leq \Lambda^{1/2}$ and $\alpha^{1/2}\|v\|_{\cV}\leq\|\bA^{1/2}\nabla_\bx v\|_{L^2(Q)}$ we further estimate
  \begin{align*}
    \dual{\divx\btau-\mu}{-v}_{\cV^*\times \cV} &\geq -\|\divst (v,\btau)+ \bb\cdot\gradx v+cv-\mu\|_{L^2(Q)}\|v\|_{L^2(Q)} + \dual{\partial_t v}{v}_{\cV^*\times \cV} \\
    &\geq -\|\divst (v,\btau) + \bb\cdot\gradx v+cv-\mu\|_{L^2(Q)}\alpha^{-1/2}c_F\|\bA^{1/2}\gradx v\|_{L^2(Q)} 
    \\
    &\qquad\qquad+ \dual{\partial_t v}{v}_{\cV^*\times \cV}
    \\
    &\geq -\frac{\Lambda}{2\alpha}\|\divst (v,\btau) + \bb\cdot\gradx v+cv-\mu\|_{L^2(Q)}^2 - \frac12\|\bA^{1/2}\gradx v\|_{L^2(Q)}^2 
    \\
    &\qquad\qquad+ \dual{\partial_t v}{v}_{\cV^*\times \cV}.
  \end{align*}
  Combining all estimates so far and $\dual{\partial_t v}{v}_{\cV^*\times \cV} = \tfrac12(\|v(T)\|_{L^2(\Omega)}^2-\|v(0)\|_{L^2(\Omega)}^2)$ gives
  \begin{align}\label{eq:proof:coercivity1}
  \begin{split}
    a(\bv,\bv) &\geq \frac{\Lambda}{2\alpha} \|\divst(v,\btau)+\bb\cdot\gradx v + cv - \mu\|_{L^2(Q)}^2
    + \frac14 \|\bA^{-1/2}\btau\|_{L^2(Q)}^2 + \frac16 \|\bA^{1/2}\gradx v\|_{L^2(Q)}^2
    \\
    &\qquad\qquad + \frac12\|v(0)\|_{L^2(\Omega)}^2 + \frac12\|v(T)\|_{L^2(\Omega)}^2.
  \end{split}
  \end{align}
  Employing the triangle inequality and $\|\bb\cdot\gradx v\|_{L^2(Q)} + \|c v\|_{L^2(Q)} \lesssim \|v\|_{\cV} \eqsim \|\bA^{1/2}\gradx v\|_{L^2(Q)}$ proves that 
  \begin{align*}
    |(v,\btau,\mu)|_\cU^2 &\lesssim \|\divst(v,\btau)+\bb\cdot\gradx v + cv - \mu\|_{L^2(Q)}^2 + \|\bb\cdot\gradx v\|_{L^2(Q)}^2 + \|c v\|_{L^2(Q)}^2\\
    &\qquad +\|\btau\|_{L^{2}(Q)}^{2} + \|v\|_\cV^2 + \|v(0)\|_{L^2(\Omega)}^2 +\|v(T)\|_{L^2(\Omega)}^2
    \\
    &\lesssim \|\divst(v,\btau)+\bb\cdot\gradx v + cv - \mu\|_{L^2(Q)}^2  +\|\btau\|_{L^{2}(Q)}^{2} + \|v\|_\cV^2+ \|v(0)\|_{L^2(\Omega)}^2 +\|v(T)\|_{L^2(\Omega)}^2. 
  \end{align*}
  Hence, the right-hand side of~\eqref{eq:proof:coercivity1} is bounded below by a multiple of $|(v,\btau,\mu)|_{\cU}^2$. This finishes the proof.
\end{proof}

\begin{theorem}\label{thm:exis_uniq_1st_ord_refo}
  For any $f\in L^2(Q)$, $u_0\in L^2(\Omega)$, and  $g\in \widetilde\cW\cap C^0(\overline Q)$ satisfying~\eqref{eq:assumptionsObs}, variational inequality~\eqref{eq:vi:sym} admits a unique solution.
\end{theorem}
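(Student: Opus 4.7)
The plan is to obtain existence by constructing a solution of the first-order system~\eqref{eq:fo} from the unique solution of the canonic variational inequality~\eqref{eq:weakform} (via Theorem~\ref{thm:weakform}), and then to obtain uniqueness directly from the coercivity of $\blfsym$ established in Lemma~\ref{lem:propblfsym}.

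For existence, I would take the unique $u\in\cW\cap\cK_\cV$ solving~\eqref{eq:weakform} and set $\bsigma:=-\bA\gradx u\in L^2(Q)^d$ and $\lambda:=\partial_t u+\cL u-f\in\cV^*$. The identity $\divst(u,\bsigma)-\lambda=f-\bb\cdot\gradx u-cu\in L^2(Q)$ shows that $(u,\bsigma,\lambda)\in\cU$, and equations~\eqref{eq:fo:a}, \eqref{eq:fo:b}, \eqref{eq:fo:d}, \eqref{eq:fo:e} are automatic. The inequality $u\geq g$ follows from $u\in\cK_\cV$, and testing~\eqref{eq:weakform} with $v=u+\phi$ for arbitrary $\phi\in\cV$, $\phi\geq 0$ (which lies in $\cK_\cV$ since $u\geq g$) produces $\dual{\lambda}{\phi}_{\cV^*\times\cV}\geq 0$, i.e., $\lambda\geq 0$ in the dual sense.

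The complementarity $\dual{\lambda}{u-g}=0$ is the main technical step, and I expect it to be the principal obstacle because $g$ need not belong to $\cV$ and hence cannot be used directly as a test function. I would argue by a cutoff construction: for any $\varphi\in C_c^\infty(Q)$ with $0\leq\varphi\leq 1$, the function $v:=u-\varphi(u-g)=(1-\varphi)u+\varphi g$ satisfies $v-g=(1-\varphi)(u-g)\geq 0$, and $v\in\cV$ because $(1-\varphi)u\in\cV$ while $\varphi g\in\cV$ as well, since $\varphi$ has compact support inside the open cylinder $Q$ (so the spatial trace of $\varphi g$ on $\partial\Omega$ vanishes) and $g\in L^2((0,T);H^1(\Omega))$ supplies the required spatial regularity. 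Inserting this $v$ into~\eqref{eq:weakform} yields $\dual{\lambda}{\varphi(u-g)}_{\cV^*\times\cV}\leq 0$ for every admissible $\varphi$; combined with $\lambda\geq 0$ and $u-g\geq 0$, the non-negative measure $\lambda(u-g)$ must vanish, and passing to the limit $\varphi\nearrow 1$ inside $Q$ gives $\dual{\lambda}{u-g}=0$. With~\eqref{eq:fo} then fully verified, the computation preceding~\eqref{eq:vi:sym} shows that $\bu=(u,\bsigma,\lambda)\in\cKsym$ solves~\eqref{eq:vi:sym}.

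Uniqueness is a standard symmetric--coercive argument. Given two solutions $\bu_1,\bu_2\in\cKsym$, I would test the VI for $\bu_i$ with $\bv=\bu_{3-i}$ and add; the right-hand sides cancel by linearity of $\rhssym$ on differences in $\cKsym$ (noting that $\int_Q g\,\di\mu_i$ is finite for each $\mu_i\geq 0$, using $\int_Q g^+\,\di\mu_i\leq\|g^+\|_\cV\|\mu_i\|_{\cV^*}$ and boundedness of $g$ on $\overline Q$), producing $\blfsym(\bu_1-\bu_2,\bu_1-\bu_2)\leq 0$. Coercivity from Lemma~\ref{lem:propblfsym} then forces $|\bu_1-\bu_2|_\cU=0$, and since $|\cdot|_\cU$ was shown to be a norm on $\cU$ in Section~\ref{sec:fo}, we conclude $\bu_1=\bu_2$.
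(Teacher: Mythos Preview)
Your proposal is correct and follows essentially the same route as the paper: existence by lifting the solution of~\eqref{eq:weakform} to a solution of the first-order system~\eqref{eq:fo} (and hence of~\eqref{eq:vi:sym} via the derivation preceding it), and uniqueness via coercivity of $\blfsym$ from Lemma~\ref{lem:propblfsym}. The paper's proof is more terse and simply asserts that the constructed triple solves~\eqref{eq:fo} without detailing the complementarity condition, whereas you supply a cutoff argument for $\dual{\lambda}{u-g}=0$; otherwise the two proofs coincide.
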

\begin{proof}
  Let $u\in\cW\cap\cK_{\cV}$ denote the solution of~\eqref{eq:weakform}. 
  Set $\bu = (u,-\bA\gradx u,\partial_t u+\cL u-f)\in \cKsym$. Then, $\bu$ is a solution of~\eqref{eq:fo} and by construction, $\bu$ also satisfies~\eqref{eq:vi:sym}. This proves existence of a solution.

  To see uniqueness, let $\bu_1$, $\bu_2 \in \cKsym$ denote solutions of~\eqref{eq:vi:sym} which implies
  \begin{align*}
    \blfsym(\bu_1,\bu_2-\bu_1) \geq \rhssym(\bu_2-\bu_1) \quad\text{and}\quad
    \blfsym(\bu_2,\bu_1-\bu_2) \geq \rhssym(\bu_1-\bu_2).
  \end{align*}
  Adding both inequalities yields $\blfsym(\bu_1-\bu_2,\bu_2-\bu_1)\geq 0$. By coerciveness (Lemma~\ref{lem:propblfsym}) we conclude that $0\geq \blfsym(\bu_1-\bu_2,\bu_1-\bu_2) \gtrsim |\bu_1-\bu_2|_\cU^2$, hence, $\bu_1=\bu_2$.
\end{proof}

\subsection{Numerical method}
Let $\cP$ denote a partition of $Q$. 
Suppose that $\cU_\cP$ is a finite-dimensional subspace of  $\cU \cap \left( L^\infty(Q)\times L^\infty(Q)^d\times L^\infty(Q)\right)$ and that $\cKsymDisc\subset \cU_\cP$ is a non-empty convex subset but not necessarily a subset of $\cKsym$. 
Below we discuss two possibilities to define $\cP$, $\cU_\cP$, and $\cKsymDisc$. The first one is a simple discretization using Lagrange finite elements, whereas the second one is the \emph{Gantner--Stevenson element}~\cite{GantnerStevenson24} defined over prismatic meshes. It is designed to achieve better convergence speeds for less regular solutions compared to the simpler Lagrange finite element.

For either case, the discrete scheme reads: Find $\bu_\cP\in \cKsymDisc$ such that
\begin{align}\label{eq:vi:sym:disc}
  \blfsym(\bu_\cP,\bv-\bu_\cP) \geq \rhssym(\bv-\bu_\cP) \quad\forall \bv\in \cKsymDisc.
\end{align}

\begin{theorem}
  For any $f\in L^2(Q)$,  $u_0\in L^2(\Omega)$, and $g\in \widetilde\cW\cap C^0(\overline Q)$ satisfying~\eqref{eq:assumptionsObs} problem~\eqref{eq:vi:sym:disc} admits a unique solution. 
\end{theorem}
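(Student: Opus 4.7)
The plan is to reduce problem~\eqref{eq:vi:sym:disc} to a classical elliptic variational inequality on a non-empty closed convex subset of a finite-dimensional Hilbert space and invoke the Lions--Stampacchia theorem. Since $\blfsym$ is symmetric, the equivalent formulation as a strictly convex quadratic minimization can also be used to argue uniqueness.

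First I would observe that $\cU_\cP$, as a finite-dimensional subspace of $\cU$, is a Hilbert space under the norm $|\cdot|_\cU$ (completeness is automatic in finite dimensions, and $|\cdot|_\cU$ was already shown to be a norm on $\cU$ in the discussion preceding Lemma~\ref{lem:propblfsym}). The linear functional $\rhssym$ was introduced on elements with $\mu\geq 0$ only because the Radon-measure pairing $\int_Q g\,\di\mu$ requires a sign condition; however, for $\bv = (v,\btau,\mu)\in \cU_\cP \subset L^\infty(Q)\times L^\infty(Q)^d\times L^\infty(Q)$, this pairing reduces to the ordinary inner product $\ip{g}{\mu}_{L^2(Q)}$ (recall $g\in C^0(\overline Q)\subset L^\infty(Q)$). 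Hence $\rhssym$ extends to a linear functional on all of $\cU_\cP$, which is automatically continuous because $\cU_\cP$ is finite-dimensional.

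Second, by Lemma~\ref{lem:propblfsym}, $\blfsym(\cdot,\cdot)$ is symmetric, $|\cdot|_\cU$-bounded, and $|\cdot|_\cU$-coercive on $\cU$, and these properties pass to the subspace $\cU_\cP$. Combined with the hypothesis that $\cKsymDisc\subset \cU_\cP$ is a non-empty closed convex subset (closedness being implicit from the definition via finitely many continuous linear inequalities in the constructions considered later), Lions--Stampacchia yields a unique $\bu_\cP\in \cKsymDisc$ with
\begin{equation*}
  \blfsym(\bu_\cP,\bv - \bu_\cP) \geq \rhssym(\bv - \bu_\cP) \quad \forall \bv\in \cKsymDisc.
\end{equation*}
Equivalently, thanks to symmetry, $\bu_\cP$ is the unique minimizer of the strictly convex, coercive quadratic functional $\bv\mapsto \tfrac12 \blfsym(\bv,\bv) - \rhssym(\bv)$ on $\cKsymDisc$.

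For uniqueness one can also simply redo the short argument from the proof of Theorem~\ref{thm:exis_uniq_1st_ord_refo}: given two solutions, use each as a test function for the other, add the two inequalities, and invoke coercivity of $\blfsym$ to conclude $|\bu_{\cP,1} - \bu_{\cP,2}|_\cU = 0$. There is no genuine obstacle here; the only point worth being explicit about is that the right-hand side $\rhssym$ is well defined on the entire discrete space $\cU_\cP$ because its $L^\infty$-regularity lets us drop the measure-theoretic positivity requirement, making the standard finite-dimensional Lions--Stampacchia framework directly applicable.
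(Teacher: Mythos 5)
Your argument is correct and follows essentially the same route as the paper's proof: extend $\rhssym$ to all of $\cU_\cP$ using the $L^\infty$-regularity of the discrete space together with $g\in C^0(\overline Q)$ and finite dimensionality, then combine the boundedness and coercivity of $\blfsym$ from Lemma~\ref{lem:propblfsym} with the Lions--Stampacchia theorem. The additional remarks about strictly convex minimization and rerunning the uniqueness argument of Theorem~\ref{thm:exis_uniq_1st_ord_refo} are valid but not needed beyond what Lions--Stampacchia already delivers.
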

\begin{proof}
  Note that the definition of $\rhssym$ given above can be extended to functions in $\cU_\cP$ and $\rhssym|_{\cU_\cP}$ is bounded due the assumption $\mu\in L^\infty(Q)$ for $(v,\btau,\mu)\in \cU_\cP$ and $g\in C^0(\overline Q)$ and $\dim(\cU_\cP)<\infty$.
  Furthermore, $\blfsym$ is bounded and coercive on $\cU_\cP\subset\cU$ by Lemma~\ref{lem:propblfsym}.
  The Lions--Stampacchia Theorem, cf.~\cite{LionsStampacchia,KinderlehrerStampacchia00}, then implies unique solvability of~\eqref{eq:vi:sym:disc}.
\end{proof}

Since $\blfsym(\cdot,\cdot)$ is a symmetric, bounded and coercive bilinear form on $\cU_{\cP}$, variational inequality~\eqref{eq:vi:sym:disc} can equivalently be written as a convex minimization problem, i.e., 
\begin{align}\label{eq:minprobdisc}
\begin{split}
  \bu_\cP = \argmin_{(v,\btau,\mu)\in\cKsymDisc} &\frac\Lambda{2\alpha} \|\divst(v,\btau)+\bb\cdot\gradx v+cv-\mu-f\|_{L^2(Q)}^2 \\
  & + \frac12\|\bA^{1/2}\gradx v + \bA^{-1/2}\btau\|_{L^2(Q)}^2 + \frac12\|v(0)-u_0\|_{L^2(\Omega)}^2 + \frac12\dual{\mu}{v-g}_{\cV^*\times\cV}.
\end{split}
\end{align}
We stress that the functional in the minimization problem is an \emph{augmented} least-squares functional. The first three terms measure the residual of~\eqref{eq:fo:a},\eqref{eq:fo:b} and~\eqref{eq:fo:e} in squared $L^2$ norms and the last term implements the complementarity condition of~\eqref{eq:fo:c}.

\subsubsection{Simplicial meshes}\label{sec:lagrangefem}
Let $\cPtri$ denote a conforming partition of $Q$ into simplices. 
The vertices of $\cPtri$ are denoted by $\cN_{\cPtri}$.

With $\bbP_k(S)$ we denote the space of polynomials of degree $\leq k\in\N_0$ with domain $S$ and with $\bbP_k(\cPtri)$ we denote $\cPtri$-piecewise polynomials of degree $\leq k$.
Define 
\begin{align*}
  \bbS_k(\cPtri) & := \bbP_k(\cPtri)\cap H^1(Q), \qquad
  \bbS_{k,0}(\cPtri) := \bbS_k(\cPtri)\cap \cV, \qquad (k\in\N),\\
  \cU_{\cPtri} & := \bbS_{1,0}(\cPtri) \times \bbS_1(\cPtri)^{d} \times\bbP_0(\cPtri), 
  \\
  \cKsymDiscTri & := \set{(v,\btau,\mu)\in \cU_{\cPtri}}{v(z)\geq g(z), \,\forall z\in\cN_{\cPtri}, \quad \mu\geq 0}.
\end{align*}
It is straightforward to verify that $\cU_{\cPtri}\subset \cU$. 
However, note that $\cKsymDiscTri$ is not necessarily a subset of $\cKsym$ unless the obstacle $g$ is in the space $\bbS_1(\cPtri)$.

\subsubsection{Prismatic meshes}\label{sec:gsfem}
We recall the definition of the finite element constructed in~\cite{GantnerStevenson24}.
Let $\cPten$ denote a partition of $Q$ into open, nonoverlapping prisms $P = J\times K$ where $J\subseteq (0,T)$ and $K\subseteq \Omega$ is a simplex.
Let $\bbP_{\ell,k}(P) = \bbP_\ell(J)\otimes \bbP_k(K)$, $(\ell,k)\in\N_0^2$ and
\begin{align*}
  \widetilde\bbP_{\ell,k}(P) = \bbP_\ell(J)\otimes \RT_k(K),
\end{align*}
where $\RT_k(K) = \bbP_k(K)+\bx \bbP_k(K)\subset \bbP_{k+1}(K)$ denotes the Raviart--Thomas space.
Note that for $d=1$ this space reduces to $\bbP_{k+1}(K)$.
In~\cite[Section~3]{GantnerStevenson24} space 
\begin{align*}
  \bbS_{\ell,k}(\cPten) = \set{(v,\btau)\in \cV\times L^2(Q)^d}{\divst(v,\btau)\in L^2(Q), \, (v,\btau)|_P \in \bbP_{\ell+1,k}(P)\times \widetilde\bbP_{\ell,k}(P) \, \forall P\in\cPten}
\end{align*}
is introduced. 

The partition $\cPten$ can either be conforming, i.e., two distinct elements $P,P'$ that touch each other share a facet, or non-conforming to some degree.
The latter is necessary to allow for local mesh-refinements. In what follows we use $|J|=\diam(J)$ and $|K|$ is the measure of the simplex $K$.
We follow~\cite[Section~3.2]{GantnerStevenson24} and say that $\cPten$ is non-conforming if for two distinct elements $P=J\times K$, $P'=J'\times K'$ that touch each other we have that $|J|\eqsim |J'|$, $|K|\eqsim |K'|$ and either $P$, $P'$ share a facet or $\overline{P}\cap\overline{P'}$ is a facet $F$ of $P$ (or $F'$ of $P'$) and a proper subset of a facet $F'$ of $P'$ (or $F$ of $P$). 
Suppose that it is a subset of $P$, then $P'$ ($F'$) is called the primary element (facet) and $P$ ($F$) the secondary element (facet).
We assume that $\cPten$ is $1$-regular meaning that if $F'$ (a facet of $P'$) is a primary facet then it has non-empty intersection with secondary facets of $P'$. 
We stress that a refinement strategy that guarantees the latter conditions is discussed in~\cite[Section~4.1]{GantnerStevenson24}.

Based on the lowest-order version, i.e., $\ell=0$, $k=1$, we define 
\begin{align*}
  \cU_{\cPten} &= \set{(v,\btau,\mu)\in\bbS_{0,1}(\cPten)\times L^2(Q)}{v|_{(0,T)\times\partial\Omega} = 0, \, \mu|_P\in \bbP_{0,1}(P), \,\forall P\in \cPten}, \\
  \cKsymDiscTen &= \cKsymDiscTen^{\jop} = \set{(v,\btau,\mu)\in\cU_{\cPten}}{v(z) \geq \jop g(z), \,\forall z\in \cN_{\cPten}, \, \mu\geq 0}.
\end{align*}
Here, $\jop$ is a (quasi-)interpolation operator with $(\jop g,0)\in \bbS_{0,1}(\cPten)$.
We require that $\jop$ is feasibly computable. The simplest choice for $\jop$ is the nodal interpolation operator, i.e., $\jop g(z) = g(z)$ for all vertices $z\in \cN_{\cPten}$.
In the a priori analysis of Section~\ref{sec:apriori} below we consider a non-negativity preserving quasi-interpolator that allows for less regular solutions in the analysis.

\section{A priori and a posteriori analysis}\label{sec:aprioriposteriori}

\subsection{Quasi-interpolators for analysis on tensor mesh}\label{sec:quasi_inter_tensor}
For the proof of Theorem~\ref{thm:convergenceRatesGS} below we use some quasi-interpolation operators. 
First, we consider quasi-uniform tensor product meshes. 
Let us denote by $\cP_t$ a quasi-uniform partition in time with mesh-size $h_t$ and by $\cP_\bx$ a quasi-uniform simplicial partition in space with mesh-size $h_\bx$. Then, $\cPten = \cP_t\otimes \cP_\bx$.
Define $\iop_{\mathrm{Cl}} = \iop_{t,\mathrm{nod}}\circ \iop_{\bx,\mathrm{Cl}}$ where
$\iop_{t,\mathrm{nod}}\colon H^1( (0,T);H) \to \bbS_1(\cP_t;H)$ is the nodal interpolation in time where $\bbS_1(\cP_t;H)$ is the space of continuous piecewise polynomials of degree $\leq 1$ with values in the Hilbert space $H$ and $\iop_{\bx,\mathrm{Cl}}\colon H^k( (0,T);L^2(\Omega)) \to H^k( (0,T); \bbS_1(\cP_\bx)\cap H_0^1(\Omega))$ ($k\in\N$) is a weighted Cl\'ement quasi-interpolator from~\cite[Section~3]{FuehrerMixedFEMHm1} extended to the time interval. 
Here, $\bbS_1(\cP_\bx)$ is defined as $\bbS_1(\cPtri)$ with $Q$ replaced by $\Omega$. The operator is defined for all interior spatial vertices $z\in \cN_{\cP_\bx}\setminus\partial\Omega$ and a.e. $t\in(0,T)$ as follows.
With $\patch(\{z\})\subset \cP_\bx$ we denote the patch (vicinity) of node $z$, i.e., all its neighboring elements, and let $\Patch(\{z\})\subset \Omega$ denote the corresponding domain. Then, 
\begin{align*}
  (\iop_{\bx,\mathrm{Cl}}v)(t,z) := \int_{\Patch(\{z\})} v(t) \varphi_z \,\di\bx
\end{align*}
with weight function
\begin{align*}
  \varphi_z|_{K_\bx} = \begin{cases} 
    \frac{\alpha_{z,K_\bx}}{|K_\bx|} & K_\bx\in\patch(\{z\}), \\
    0 & \text{else},
  \end{cases}
\end{align*}
where for each $z\in \cN_\bx\setminus\partial\Omega$, coefficients $0\leq\alpha_{z,K_\bx} <1$ satisfy
\begin{align*}
  \sum_{K_\bx\in \patch(\{z\})} \alpha_{z,K_\bx} = 1, \qquad
  \sum_{K_\bx\in \patch(\{z\})} \alpha_{z,K_\bx}z_{K_\bx} = z.
\end{align*}
Here, $z_{K_\bx}\in K_\bx$ denotes the center of mass of $K_\bx$.

Recall that in the definition of $\cKsymDiscTen$ we need an operator $\jop$ to approximate the obstacle function.
For an appropriate definition of such an operator we extend $\iop_{\bx,\mathrm{Cl}}$ for boundary vertices as follows. 
Define $\widetilde\iop_{\bx,\mathrm{Cl}}$ by
\begin{align*}
  (\widetilde\iop_{\bx,\mathrm{Cl}}v)(t,z) = \begin{cases}
    (\iop_{\bx,\mathrm{Cl}}v)(t,z) & \text{if } z\in\cN_{\cP_\bx}\setminus\partial\Omega, \\
    v(t,z) & \text{if } z\in\cN_{\cP_\bx}\cap\partial\Omega,
  \end{cases}
  \quad\text{for almost all } t\in (0,T).
\end{align*}
Then, set $\widetilde\iop_{\mathrm{Cl}} = \iop_{t,\mathrm{nod}}\circ\widetilde\iop_{\bx,\mathrm{Cl}}$.
Note that $\widetilde\iop_{\mathrm{Cl}}g$ is well-defined. 

The following results summarize some properties of the operators defined above.
\begin{lemma}\label{lem:propCL}
  Operator $\iop_{\bx,\mathrm{Cl}}$ and $\iop_{\mathrm{Cl}}$ preserve non-negativity. 

  If $v\in L^2( (0,T);H^k(\Omega)\cap H_0^1(\Omega))$, then,
  \begin{align*}
    \|(1-\iop_{\bx,\mathrm{Cl}})v\|_{L^2(Q)} + h_\bx \|\gradx(1-\iop_{\bx,\mathrm{Cl}})v\|_{L^2(Q)}
    \lesssim h_\bx^k \|D_\bx^k v\|_{L^2(Q)} \qquad k=1,2.
  \end{align*}
  If $v\in H^1( (0,T); H_0^1(\Omega))$, then,
  \begin{align*}
    \|\partial_t(1-\iop_{\bx,\mathrm{Cl}})v\|_{\cV^*} \lesssim \|\partial_t(1-\iop_{\bx,\mathrm{Cl}})v\|_{L^2(Q)}  \lesssim h_\bx\|\partial_t\gradx v\|_{L^2(Q)}.
  \end{align*}
  If $v\in L^2( (0,T);H^k(\Omega)\cap H_0^1(\Omega))\cap H^j( (0,T);L^2(\Omega))$, then,
  \begin{align*}
    \|(1-\iop_{\mathrm{Cl}})v\|_{L^2(Q)} + h_\bx \|\gradx(1-\iop_{\mathrm{Cl}})v\|_{L^2(Q)} &\lesssim  h_\bx^k \|D_\bx^k v\|_{L^2(Q)} + h_t^j\|\partial_t^jv\|_{L^2(Q)} \qquad j,k=1,2.
  \end{align*}
\end{lemma}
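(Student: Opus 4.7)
The plan is to verify the four statements in order, exploiting the fact that the spatial Cl\'ement-type operator $\iop_{\bx,\mathrm{Cl}}$ acts pointwise in time while the nodal operator $\iop_{t,\mathrm{nod}}$ acts pointwise in space. Non-negativity preservation is immediate from the construction: since $\alpha_{z,K_\bx}\geq 0$ and $|K_\bx|>0$, the weight $\varphi_z$ is non-negative on the whole patch $\Patch(\{z\})$. Consequently, $(\iop_{\bx,\mathrm{Cl}}v)(t,z)=\int_{\Patch(\{z\})}v(t)\varphi_z\,\di\bx\geq 0$ whenever $v\geq 0$; expanding $\iop_{\bx,\mathrm{Cl}}v$ in the nodal basis of non-negative hat functions preserves the sign. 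Since the piecewise linear temporal nodal interpolation $\iop_{t,\mathrm{nod}}$ is itself monotone at nodes (and linear interpolation between non-negative values stays non-negative), this also covers $\iop_{\mathrm{Cl}}$.

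For the purely spatial estimates in the second item, the two moment conditions on $\alpha_{z,K_\bx}$ translate to $\int_\Omega\varphi_z\,\di\bx=1$ and $\int_\Omega\bx\,\varphi_z\,\di\bx=z$, so that $\iop_{\bx,\mathrm{Cl}}$ reproduces affine polynomials on interior patches, is local, and is $L^2$- and $H^1$-stable. These are precisely the hypotheses used in~\cite{FuehrerMixedFEMHm1}, where a Bramble--Hilbert argument on each patch yields the local approximation bounds. I would apply those estimates pointwise in $t\in(0,T)$ to the function $v(t,\cdot)\in H^k(\Omega)\cap H_0^1(\Omega)$ and integrate over time to obtain the required global bounds for $k=1,2$.

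The third item follows from the observation that $\iop_{\bx,\mathrm{Cl}}$ commutes with $\partial_t$, because neither the weights $\varphi_z$ nor the spatial hat basis depend on time. Therefore $\partial_t(1-\iop_{\bx,\mathrm{Cl}})v=(1-\iop_{\bx,\mathrm{Cl}})\partial_t v$; the first inequality is then a consequence of $\|\cdot\|_{\cV^*}\lesssim \|\cdot\|_{L^2(Q)}$ (Friedrich's inequality, as recalled in Section~\ref{sec:analysis}), while the second is the $k=1$ case of the previous item applied to $\partial_t v\in L^2((0,T);H_0^1(\Omega))$.

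For the combined estimate I would exploit that $\iop_{\bx,\mathrm{Cl}}$ and $\iop_{t,\mathrm{nod}}$ commute. This follows from the tensor-product nature of the operator: for a temporal node $t_i$ and spatial node $z$, both iterated operators evaluate to $\int_{\Patch(\{z\})} v(t_i)\varphi_z\,\di\bx$, and the two linear reconstructions (in time and in space) are independent. Writing
\begin{align*}
 v-\iop_{\mathrm{Cl}}v = (v-\iop_{t,\mathrm{nod}}v) + \iop_{t,\mathrm{nod}}(v-\iop_{\bx,\mathrm{Cl}}v),
\end{align*}
I would bound the first summand by the standard temporal nodal-interpolation error $h_t^j\|\partial_t^j v\|_{L^2(Q)}$ for $j=1,2$, and the second one by combining $L^2$-stability of $\iop_{t,\mathrm{nod}}$ on $H^1((0,T);L^2(\Omega))$ with the spatial estimate from the second item. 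The gradient contribution is treated identically after noting that $\gradx$ commutes with $\iop_{t,\mathrm{nod}}$. The main obstacle, I expect, will be the careful bookkeeping of the commutation between $\iop_{\bx,\mathrm{Cl}}$ and $\iop_{t,\mathrm{nod}}$, and making sure the boundary-node modification (zero at $\cN_{\cP_\bx}\cap\partial\Omega$) does not spoil the invocation of the approximation results from~\cite{FuehrerMixedFEMHm1} on patches touching $\partial\Omega$, where one uses $v(t,\cdot)\in H_0^1(\Omega)$ to avoid losing the optimal rates.
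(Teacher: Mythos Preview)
Your treatment of non-negativity and of the second and third items is essentially the same as the paper's. The gap is in the last item: your splitting
\begin{align*}
  v-\iop_{\mathrm{Cl}}v = (v-\iop_{t,\mathrm{nod}}v) + \iop_{t,\mathrm{nod}}(v-\iop_{\bx,\mathrm{Cl}}v)
\end{align*}
does not yield the stated bound under the stated hypotheses. For the gradient contribution you would need, after commuting $\gradx$ past $\iop_{t,\mathrm{nod}}$, to control $\|(1-\iop_{t,\mathrm{nod}})\gradx v\|_{L^2(Q)}$, which requires $\gradx v\in H^j((0,T);L^2(\Omega)^d)$, i.e.\ the mixed regularity $v\in H^j((0,T);H^1(\Omega))$ that is \emph{not} assumed. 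Likewise, for the term $\iop_{t,\mathrm{nod}}(1-\iop_{\bx,\mathrm{Cl}})v$ the nodal interpolant $\iop_{t,\mathrm{nod}}$ is not $L^2$-stable; any usable stability bound brings in $\|\partial_t(1-\iop_{\bx,\mathrm{Cl}})v\|_{L^2(Q)}$, which again forces extra regularity (or leaves a spurious $h_t\|\partial_t v\|_{L^2(Q)}$ term that does not match the claimed $h_t^j\|\partial_t^jv\|_{L^2(Q)}$ for $j=2$).

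The paper avoids this by reversing the roles in the splitting:
\begin{align*}
  (1-\iop_{\mathrm{Cl}})v = (1-\iop_{\bx,\mathrm{Cl}})v + \iop_{\bx,\mathrm{Cl}}(1-\iop_{t,\mathrm{nod}})v.
\end{align*}
The point is that $\iop_{\bx,\mathrm{Cl}}$ \emph{is} $L^2(Q)\to L^2(Q)$ bounded (no time regularity needed), so $\|\iop_{\bx,\mathrm{Cl}}(1-\iop_{t,\mathrm{nod}})v\|_{L^2(Q)}\lesssim \|(1-\iop_{t,\mathrm{nod}})v\|_{L^2(Q)}\lesssim h_t^j\|\partial_t^jv\|_{L^2(Q)}$. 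For the gradient one does not commute $\gradx$ through $\iop_{t,\mathrm{nod}}$ at all; instead an inverse estimate in space gives $\|\gradx\iop_{\bx,\mathrm{Cl}}(1-\iop_{t,\mathrm{nod}})v\|_{L^2(Q)}\lesssim h_\bx^{-1}\|(1-\iop_{t,\mathrm{nod}})v\|_{L^2(Q)}$, which is precisely the $h_t^j h_\bx^{-1}$ contribution in the statement. Swap the order of your decomposition and invoke the $L^2$-boundedness of $\iop_{\bx,\mathrm{Cl}}$ together with an inverse estimate, and the proof goes through without any mixed-regularity assumption.
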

\begin{proof}
  By definition all weight functions $\varphi_z$ are non-negative. Therefore, $\iop_{\bx,\mathrm{Cl}}v\geq 0$ for $v\geq 0$.
  Non-negativity of $\iop_{\mathrm{Cl}} = \iop_{t,\mathrm{nod}}\circ \iop_{\bx,\mathrm{Cl}}$ then follows since the nodal interpolation in time preserves non-negativity as well.
  The approximation estimates for $\iop_{\bx,\mathrm{Cl}}$ are consequences of~\cite[Theorem~11]{FuehrerMixedFEMHm1}.

  For the next estimate we use the rough inequality $\|\cdot\|_{\cV^*} \lesssim \|\cdot\|_{L^2(Q)}$ and $\partial_t(1-\iop_{\bx,\mathrm{Cl}})v = (1-\iop_{\bx,\mathrm{Cl}})\partial_t v$ to see that
  \begin{align*}
    \|\partial_t(1-\iop_{\bx,\mathrm{Cl}})v\|_{\cV^*} \leq \|(1-\iop_{\bx,\mathrm{Cl}})\partial_tv\|_{L^2(Q)} \lesssim h_\bx\|\gradx\partial_t v\|_{L^2(Q)}.
  \end{align*}

  Splitting the operator as in~\cite{FuehrerKarkulik21} we write $(1-\iop_{\mathrm{Cl}})v = (1-\iop_{\bx,\mathrm{Cl}})v + \iop_{\bx,\mathrm{Cl}}(1-\iop_{t,\mathrm{nod}})v$. Then, using that $\iop_{\bx,\mathrm{Cl}}$ is bounded from $L^2(Q)\to L^2(Q)$ (this again follows from~\cite[Theorem~11]{FuehrerMixedFEMHm1}) shows
  \begin{align*}
    \|(1-\iop_{\mathrm{Cl}})v\|_{L^2(Q)} &\lesssim \|(1-\iop_{\bx,\mathrm{Cl}})v\|_{L^2(Q)} + \|\iop_{\bx,\mathrm{Cl}}(1-\iop_{t,\mathrm{nod}})v\|_{L^2(Q)}
    \\
    &\lesssim h_\bx^k \|D_\bx^k v\|_{L^2(Q)} + \|(1-\iop_{t,\mathrm{nod}})v\|_{L^2(Q)} \lesssim h_\bx^k \|D_\bx^k v\|_{L^2(Q)} + h_t^j\|\partial_t^jv\|_{L^2(Q)}
  \end{align*}
  and with an additional inverse estimate the same ideas yield
  \begin{align*}
    \|\gradx (1-\iop_{\mathrm{Cl}})v\|_{L^2(Q)} &\lesssim \|\gradx(1-\iop_{\bx,\mathrm{Cl}})v\|_{L^2(Q)} + \|\gradx\iop_{\bx,\mathrm{Cl}}(1-\iop_{t,\mathrm{nod}})v\|_{L^2(Q)}
    \\
    &\lesssim h_\bx^{k-1} \|D_\bx^k v\|_{L^2(Q)} + h_\bx^{-1} \|(1-\iop_{t,\mathrm{nod}})v\|_{L^2(Q)} 
    \\
    &\lesssim h_\bx^{k-1} \|D_\bx^k v\|_{L^2(Q)} + h_t^jh_\bx^{-1}\|\partial_t^jv\|_{L^2(Q)}.
  \end{align*}
  This finishes the proof.
\end{proof}

\begin{remark}
  The first positivity preserving operator with minimal regularity requirements for the analysis of finite element methods for obstacle problems has been introduced in~\cite[Section~3]{ChenNochetto00}.
  There the authors define an operator $\iop_\mathrm{CN} v(z) = |B_z|^{-1}\int_{B_z} v\,\di\bx$ ($z\in\cN_{\cP_\bx}\setminus\partial\Omega$) where $B_z$ denotes the ball with maximal radius such that $B_z\subseteq \Omega(\{z\})$. This operator has the same approximation properties as $\iop_{\bx,\mathrm{Cl}}$ when extended in time and can be used in the definition of $\iop_\mathrm{Cl}$ instead of $\iop_{\bx,\mathrm{Cl}}$ in the analysis below.
  \qed
\end{remark}

We introduce further operators for the analysis.
Let $\Pi_{t,0}$ denote the $L^2(Q)$ orthogonal projection onto $\bbP_0(\cP_t;H)$ (the space of $\cP_t$-piecewise constant functions with values in some Hilbert space $H$) and let $\Pi_{\bx,k}$ denote the $L^2(Q)$ orthogonal projection in space onto $L^2( (0,T);\bbP_k(\cP_\bx))$.
Set $\Pi_{0,k} = \Pi_{\bx,k} \circ \Pi_{t,0} = \Pi_{t,0} \circ \Pi_{\bx,k}$.
Let $\iop_{\bx,\mathrm{nod}}\colon H^k( (0,T);H^2(\Omega))\to H^k( (0,T);\bbS_1(\cP_\bx))$ ($k\in\N$) the nodal interpolation operator in space and set $\iop_\mathrm{nod} = \iop_{t,\mathrm{nod}}\circ \iop_{\bx,\mathrm{nod}} = \iop_{\bx,\mathrm{nod}}\circ \iop_{t,\mathrm{nod}}$.

We also need the following operator. 
\begin{lemma}[{\cite[Theorem~17]{StevensonStorn23}}]\label{lem:iop2}
  There exists $\iop_{\divx}\colon L^2(Q)^d \to \mathbb{P}_{0}(\cP_t)\otimes \RT_1(\cP_\bx)\subset L^2(Q)^d$ linear and bounded satisfying the commutativity property
  \begin{align*}
    \divx \iop_{\divx}\btau = \Pi_{0,1}\divx\btau \qquad\forall \btau\in L^2( (0,T);\Hdivxset\Omega).
  \end{align*}
  Furthermore,
  \begin{align*}
    \|\btau-\iop_{\divx}\btau\|_{L^2(Q)}^2 &\eqsim \sum_{K_t\times K_\bx\in\cPten}
    \Big( 
      \min_{\btau_\bx\in L^2(K_t;\RT_1(\cP_\bx))} \|\btau-\btau_\bx\|_{L^2(K_t;L^2(\Patch(K_\bx)))}^2
      \\ 
      &\qquad + \min_{\btau_t \in \mathbb{P}_{0}(K_t;L^2(K_\bx))} \|\btau-\btau_t\|_{L^2(K_t;L^2(K_\bx))}^2
    \Big),
  \end{align*}
  where $\Patch(K_\bx):=\mathrm{int}\overline{\cup\{K_\bx^{\prime} \in \cP_\bx: \overline{K_\bx}|\cap 
  \overline{K_\bx^{\prime}} \neq \emptyset\}}$.
  \qed
\end{lemma}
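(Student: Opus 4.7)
The plan is to construct $\iop_{\divx}$ by tensorizing a time projection with a spatial $H(\divx)$-conforming quasi-interpolator. First I would set $\iop_{\divx} := \iop_{\bx,\divx} \circ \Pi_{t,0}$, where $\Pi_{t,0}$ is the $L^2$-orthogonal projection in time onto $\bbP_0(\cP_t; L^2(\Omega)^d)$ and $\iop_{\bx,\divx}\colon L^2(\Omega)^d \to \RT_1(\cP_\bx)$ is a Fortin/Ern--Guermond-type averaged Raviart--Thomas quasi-interpolant, applied slicewise in time. The operator $\iop_{\bx,\divx}$ has to be constructed so that it is well-defined on all of $L^2(\Omega)^d$ (i.e., avoiding the regularity demanded by the canonical RT interpolant), is $L^2$-stable, satisfies the commuting relation $\divx \iop_{\bx,\divx}\bphi = \Pi_{\bx,1}\divx\bphi$ for $\bphi\in\Hdivxset\Omega$, and admits the local near-best-approximation bound $\|\bphi - \iop_{\bx,\divx}\bphi\|_{L^2(K_\bx)}^2 \lesssim \min_{\btau_\bx\in\RT_1(\cP_\bx)} \|\bphi - \btau_\bx\|_{L^2(\Patch(K_\bx))}^2$.

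With such $\iop_{\bx,\divx}$ in hand, linearity and $L^2$-boundedness of $\iop_{\divx}$ are immediate from those of $\Pi_{t,0}$ and $\iop_{\bx,\divx}$. For the commutativity property I would observe that $\Pi_{t,0}$ commutes with $\divx$ (since $\Pi_{t,0}$ acts only on the time variable), so
\begin{align*}
  \divx \iop_{\divx}\btau = \divx \iop_{\bx,\divx}\Pi_{t,0}\btau = \Pi_{\bx,1}\divx \Pi_{t,0}\btau = \Pi_{\bx,1}\Pi_{t,0}\divx\btau = \Pi_{0,1}\divx\btau,
\end{align*}
where the last equality uses that $\Pi_{\bx,1}$ and $\Pi_{t,0}$ commute by Fubini.

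For the approximation estimate I would use the triangle-inequality splitting
\begin{align*}
  \btau - \iop_{\divx}\btau = (1-\Pi_{t,0})\btau + \iop_{\bx,\divx}\big((\Pi_{t,0}-\mathrm{Id})\btau\big) + (1-\iop_{\bx,\divx})\btau
\end{align*}
(or a simpler two-term variant $(1-\Pi_{t,0})\btau + (1-\iop_{\bx,\divx})\Pi_{t,0}\btau$, combined with $L^2$-stability of $\iop_{\bx,\divx}$). On each prism $K_t\times K_\bx$ the first term is controlled by the best approximation by time-constants, giving the term $\min_{\btau_t}\|\btau - \btau_t\|_{L^2(K_t;L^2(K_\bx))}$. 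For the second term, the local patch-based best-approximation property of $\iop_{\bx,\divx}$, applied a.e. in time and integrated over $K_t$, together with the fact that $\Pi_{t,0}$ is an $L^2$-orthogonal projection onto a space included in $L^2(K_t;\RT_1(\cP_\bx))$-invariant slices, yields $\min_{\btau_\bx \in L^2(K_t;\RT_1(\cP_\bx))}\|\btau-\btau_\bx\|_{L^2(K_t;L^2(\Patch(K_\bx)))}^2$. The reverse inequality $\gtrsim$ follows from the observation that any candidate from the tensor product space $\bbP_0(\cP_t)\otimes \RT_1(\cP_\bx)$ belongs to both the time-constants space and to $L^2(K_t;\RT_1(\cP_\bx))$, so the best combined approximation dominates each of the two summands separately.

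The main obstacle is the construction and verification of the spatial quasi-interpolant $\iop_{\bx,\divx}$ with all three required ingredients simultaneously: $L^2$-admissibility of the input, exact commutativity with $\divx$ (via $\Pi_{\bx,1}$), and sharp patch-local best-approximation bounds. This is a subtle but well-developed piece of $H(\divx)$-conforming FEM theory (for instance via Schöberl-type smoothing followed by the canonical RT projection and a correction from the bubble space to restore the commuting diagram), and it is essentially what is carried out in~\cite{StevensonStorn23}. Once $\iop_{\bx,\divx}$ is in place, the tensor-product lift to space-time and the derivation of the $\eqsim$ estimate are routine.
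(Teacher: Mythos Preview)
The paper does not give its own proof of this lemma: it is stated as a direct citation of \cite[Theorem~17]{StevensonStorn23} and closed with \qed. Your sketch---tensorizing $\Pi_{t,0}$ with a spatial $L^2$-bounded commuting Raviart--Thomas quasi-interpolant $\iop_{\bx,\divx}$, then deducing commutativity from $\divx\Pi_{t,0}=\Pi_{t,0}\divx$ and $\divx\iop_{\bx,\divx}=\Pi_{\bx,1}\divx$, and the two-sided approximation bound from the splitting $(1-\Pi_{t,0})+(1-\iop_{\bx,\divx})\Pi_{t,0}$ together with finite patch overlap---is exactly the tensor-product construction that the cited reference carries out, and you have correctly isolated the only nontrivial ingredient (the spatial operator with simultaneous $L^2$-admissibility, exact commuting diagram, and patch-local best approximation) and attributed it to that source.

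One small point on your lower bound: when you take $\iop_{\divx}\btau$ as a candidate in the patch minimum $\min_{\btau_\bx\in L^2(K_t;\RT_1(\cP_\bx))}\|\btau-\btau_\bx\|_{L^2(K_t;L^2(\Patch(K_\bx)))}$, you get $\|\btau-\iop_{\divx}\btau\|$ on the \emph{patch} $\Patch(K_\bx)$, not on $K_\bx$, so the sum over prisms overcounts; you need the shape-regularity/finite-overlap argument explicitly to conclude $\gtrsim$. You allude to this but it is worth stating cleanly.
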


Following the ideas of~\cite[Section~5.2]{StevensonStorn23} we define operator $\iop$ as follows
\begin{align*}
  \iop(v,\btau,\mu) &:= (\iop_1 (v,\btau,\mu),\iop_2(v,\btau,\mu),\iop_3(v,\btau,\mu)) \\
  &= (\iop_{\mathrm{Cl}}v,\iop_{\divx}\btau+\iop_{\divx}( \gradx\Delta_\bx^{-1}(\partial_t(v-\iop_{\bx,\mathrm{Cl}}v)-(\Pi_{0,1}\mu-\Pi_{0,0}\mu))),\Pi_{0,0}\mu).
\end{align*}
This operator is designed to have a commutativity property together with other properties as stated in the next result.
\begin{lemma}
  Operator $\iop(v,\btau,\mu)$ is well defined for all $(v,\btau,\mu)\in \cU$ with $\mu\in L^2(Q)$.
  It satisfies the commutativity property
  \begin{align*}
    \big(\divst(\iop_1,\iop_2)-\iop_3\big)(v,\btau,\mu) = \Pi_{0,1}(\divst(v,\btau)-\mu)
    \quad\forall (v,\btau,\mu)\in \cU \quad\text{with } \mu\in L^2(Q).
  \end{align*}
  Furthermore, 
  \begin{align*}\|\btau-\iop_2(v,\btau,\mu)\|_{L^2(Q)} &\lesssim \|\btau-\iop_{\divx}\btau\|_{L^2(Q)}
    + \|\partial_t(v-\iop_{\mathrm{\bx,Cl}}v)\|_{\cV^*} + h_\bx\|\mu\|_{L^2(Q)}.
  \end{align*}
\end{lemma}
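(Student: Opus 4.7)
The plan is to verify the three assertions in sequence, with the bulk of the work concentrated in the commutativity identity. For well-definedness, from $(v,\btau,\mu)\in\cU$ and the definition of $\cU$ we have $\divst(v,\btau)-\mu\in L^2(Q)$; since $\divx\colon L^2(Q)^d\to\cV^*$ is bounded, $\partial_t v = \divst(v,\btau)-\divx\btau\in\cV^*$. Because $\iop_{\bx,\mathrm{Cl}}$ is linear and commutes with time differentiation in the distributional sense, $\partial_t(v-\iop_{\bx,\mathrm{Cl}}v)\in\cV^*$ as well. The solution operator $\gradx\Delta_\bx^{-1}\colon H^{-1}(\Omega)\to L^2(\Omega)^d$ is bounded and extends in time to $\cV^*\to L^2(Q)^d$. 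Combined with $\Pi_{0,1}\mu-\Pi_{0,0}\mu\in L^2(Q)\hookrightarrow\cV^*$ and $L^2$-boundedness of $\iop_{\divx}$ (Lemma~\ref{lem:iop2}), this places $\iop_2(v,\btau,\mu)$ in $L^2(Q)^d$.

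The heart of the proof is a ``defect identity'' for the temporal component. Since $\iop_{\mathrm{Cl}}=\iop_{t,\mathrm{nod}}\circ\iop_{\bx,\mathrm{Cl}}$ and nodal time interpolation reproduces slab-wise averages of the derivative, I would first show
\begin{align*}
\partial_t\iop_{\mathrm{Cl}}v = \Pi_{t,0}\partial_t\iop_{\bx,\mathrm{Cl}}v = \Pi_{t,0}\iop_{\bx,\mathrm{Cl}}\partial_t v.
\end{align*}
Because the range of $\iop_{\bx,\mathrm{Cl}}$ consists of spatially piecewise linear continuous functions and is thus fixed by $\Pi_{\bx,1}$, this rearranges into
\begin{align*}
\Pi_{0,1}\partial_t v - \partial_t\iop_{\mathrm{Cl}}v = \Pi_{0,1}\partial_t(v-\iop_{\bx,\mathrm{Cl}}v).
\end{align*}
Next I would apply Lemma~\ref{lem:iop2} twice to $\divx\iop_2$: the first summand produces $\Pi_{0,1}\divx\btau$, and the correction term, using $\divx\gradx\Delta_\bx^{-1}=\mathrm{Id}$ together with $\Pi_{0,1}\Pi_{0,0}\mu=\Pi_{0,0}\mu$, collapses to $\Pi_{0,1}\partial_t(v-\iop_{\bx,\mathrm{Cl}}v)-\Pi_{0,1}\mu+\Pi_{0,0}\mu$. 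Assembling $\divst(\iop_1,\iop_2)-\iop_3 = \partial_t\iop_{\mathrm{Cl}}v+\divx\iop_2-\Pi_{0,0}\mu$ and substituting the defect identity, the $\Pi_{0,0}\mu$ contributions cancel, leaving precisely $\Pi_{0,1}(\divst(v,\btau)-\mu)$.

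For the $L^2$ estimate, the triangle inequality gives
\begin{align*}
\|\btau-\iop_2(v,\btau,\mu)\|_{L^2(Q)} \le \|\btau-\iop_{\divx}\btau\|_{L^2(Q)} + \|\iop_{\divx}\gradx\Delta_\bx^{-1}(\partial_t(v-\iop_{\bx,\mathrm{Cl}}v)-(\Pi_{0,1}\mu-\Pi_{0,0}\mu))\|_{L^2(Q)}.
\end{align*}
Boundedness of $\iop_{\divx}$ on $L^2(Q)^d$, combined with $\|\gradx\Delta_\bx^{-1}\phi\|_{L^2(Q)}\lesssim\|\phi\|_{\cV^*}$, bounds the second summand by $\|\partial_t(v-\iop_{\bx,\mathrm{Cl}}v)\|_{\cV^*}+\|\Pi_{0,1}\mu-\Pi_{0,0}\mu\|_{\cV^*}$. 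Since both projections preserve the spatial element mean on each $K_\bx\in\cP_\bx$, their difference has vanishing mean in $\bx$ on every $K_\bx$, and a standard Poincar\'e--duality argument yields $\|\Pi_{0,1}\mu-\Pi_{0,0}\mu\|_{\cV^*}\lesssim h_\bx\|\Pi_{0,1}\mu-\Pi_{0,0}\mu\|_{L^2(Q)}\lesssim h_\bx\|\mu\|_{L^2(Q)}$, which combined with the previous estimate gives the claim. The principal obstacle is the algebraic bookkeeping in the commutativity step: one must carefully track sign conventions for $\gradx\Delta_\bx^{-1}$, verify that $\Pi_{\bx,1}$ and $\Pi_{t,0}$ commute through the quasi-interpolants as needed, and check that all unwanted contributions cancel once the defect identity for $\partial_t\iop_{\mathrm{Cl}}v$ is invoked.
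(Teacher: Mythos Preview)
Your proposal is correct and follows essentially the same route as the paper. The key ingredients---the identity $\partial_t\iop_{\mathrm{Cl}}v=\Pi_{0,1}\partial_t\iop_{\bx,\mathrm{Cl}}v$ (your ``defect identity''), the commutator property of $\iop_{\divx}$ from Lemma~\ref{lem:iop2}, boundedness of $\gradx\Delta_\bx^{-1}\colon\cV^*\to L^2(Q)^d$, and the duality estimate $\|(\Pi_{0,1}-\Pi_{0,0})\mu\|_{\cV^*}\lesssim h_\bx\|\mu\|_{L^2(Q)}$---are exactly those used in the paper, and your assembly of them matches the paper's computation line by line.
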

\begin{proof}
  Let $(v,\btau,\mu)\in \cU$ with $\mu\in L^2(Q)$ be given. 
  Then,
  \begin{align*}
    (\divst(\iop_1,\iop_2)-\iop_3\big)(v,\btau,\mu)
    &= \partial_t\iop_{\mathrm{Cl}}v + \Pi_{0,1}\divx\btau + \Pi_{0,1}\partial_t(v-\iop_{\bx,\mathrm{Cl}}v) - \Pi_{0,1}\mu
    \\
    &= \Pi_{0,1}(\partial_t v + \divx\btau -\mu).
  \end{align*}
  For the last identity we have used that $\Pi_{0,1}\partial_t\iop_{\mathrm{\bx,Cl}}v = \Pi_{0,1}\Pi_{t,0}\partial_t \iop_{\bx,\mathrm{Cl}} v =\Pi_{0,1}\partial_t\iop_\mathrm{Cl}v=\partial_t \iop_{\mathrm{Cl}}$.
  This proves the commutativity. 
  
  Furthermore, triangle inequalities and the fact that $\Delta_\bx\colon \cV\to \cV^*$ is an isomorphism show that
  \begin{align*}
    \|\btau-\iop_{2}(v,\btau,\mu)\|_{L^2(Q)} &\lesssim \|\btau-\iop_{\divx}\btau\|_{L^2(Q)}
    + \|\partial_t(v-\iop_{\bx,\mathrm{Cl}}v)\|_{\cV^*} + \|(\Pi_{0,1}-\Pi_{0,0})\mu\|_{\cV^*}.
  \end{align*}
  To estimate the last term, we write $\Pi_{0,1}-\Pi_{0,0} = (\Pi_{\bx,1}-\Pi_{\bx,0})\Pi_{t,0}$ and use this identity to get
  \begin{align*}
    \dual{(\Pi_{0,1}-\Pi_{0,0})\mu}{v}_{\cV^*\times \cV} &= \ip{(\Pi_{\bx,1}-\Pi_{\bx,0})\Pi_{t,0}\mu}{v}_{L^2(Q)}
    = \ip{\Pi_{t,0}\mu}{(\Pi_{\bx,1}-\Pi_{\bx,0})v}_{L^2(Q)}
    \\
    &\leq \|\Pi_{t,0}\mu\|_{L^2(Q)} \|(\Pi_{\bx,1}-\Pi_{\bx,0})v\|_{L^2(Q)} \lesssim \|\mu\|_{L^2(Q)}h_\bx\|\gradx v\|_{L^2(Q)}.
  \end{align*}
  We conclude that $\|(\Pi_{0,1}-\Pi_{0,0})\mu\|_{\cV^*} \lesssim h_\bx \|\mu\|_{L^2(Q)}$.
\end{proof}

\subsection{A priori error analysis}\label{sec:apriori}

The following result, sometimes referred to as the Falk--C\'ea lemma, follows well-established arguments in the a priori analysis of variational inequalities, cf.~\cite{Falk74}.
For the case of least-squares methods for elliptic obstacle problems we refer to the proof of~\cite[Theorem~8]{LSQobstacle}.
The result below follows the same lines of argumentation and is therefore omitted.
\begin{proposition}\label{prop:error_estimate}
  Let $\bu=(u,\bsigma,\lambda)\in \cKsym$ denote the solution of~\eqref{eq:vi:sym} and let $\bu_\cP=(u_\cP,\bsigma_\cP,\lambda_\cP)\in \cKsymDisc$ denote the solution of~\eqref{eq:vi:sym:disc}. Then, 
  \begin{align*}
    |\bu-\bu_\cP|_\cU^2 &\lesssim \min_{\bv=(v,\btau,\mu)\in\cKsymDisc} \left(|\bu-\bv|_\cU^2 + \left\lvert\dual{\lambda}{v-u}_{\cV^*\times \cV} + \dual{\mu-\lambda}{u-g}_{\cV^*\times \cV}\right\rvert \right)
    \\
    &\qquad\quad + \min_{\bv=(v,\btau,\mu)\in \cKsym} \left\lvert \dual{\lambda}{v-u_{\cP}}_{\cV^*\times \cV} + \dual{\mu-\lambda_{\cP}}{u-g}_{\cV^*\times \cV} \right\rvert.
  \end{align*}
  \qed
\end{proposition}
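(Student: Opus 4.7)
The plan is to follow the classical Falk--C\'ea argument for variational inequalities, adapted to our augmented least-squares bilinear form. My starting point will be coercivity of $\blfsym(\cdot,\cdot)$ on $\cU$ (Lemma~\ref{lem:propblfsym}), which reduces matters to estimating $\blfsym(\bu-\bu_\cP,\bu-\bu_\cP)$. For an arbitrary $\bv=(v,\btau,\mu)\in\cKsymDisc$ I would split
\begin{align*}
  \blfsym(\bu-\bu_\cP,\bu-\bu_\cP) = \blfsym(\bu-\bu_\cP,\bu-\bv) + \blfsym(\bu-\bu_\cP,\bv-\bu_\cP),
\end{align*}
bound the first summand by continuity (Lemma~\ref{lem:propblfsym}) as $\lesssim |\bu-\bu_\cP|_\cU|\bu-\bv|_\cU$, and apply the discrete variational inequality~\eqref{eq:vi:sym:disc} to the second to deduce $\blfsym(\bu-\bu_\cP,\bv-\bu_\cP)\le \blfsym(\bu,\bv-\bu_\cP)-\rhssym(\bv-\bu_\cP)$.

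The pivotal observation is an identity for $\blfsym(\bu,\boldsymbol{w})-\rhssym(\boldsymbol{w})$ valid for every test $\boldsymbol{w}=(w,\bchi,\eta)\in\cU$. Since $\bu=(u,\bsigma,\lambda)$ satisfies the first-order system~\eqref{eq:fo:a}--\eqref{eq:fo:b} and the initial condition~\eqref{eq:fo:e} exactly, each of the three squared-residual terms in $\blfsym(\bu,\boldsymbol{w})-\rhssym(\boldsymbol{w})$ collapses to zero and only the two duality terms survive, leaving
\begin{align*}
  \blfsym(\bu,\boldsymbol{w})-\rhssym(\boldsymbol{w}) = \tfrac12\dual{\lambda}{w}_{\cV^*\times\cV}+\tfrac12\dual{\eta}{u-g}_{\cV^*\times\cV}
\end{align*}
(using the convention $\int_Q g\,\di\eta=\dual{\eta}{g}_{\cV^*\times\cV}$). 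Specializing $\boldsymbol{w}=\bv-\bu_\cP$ then gives
\begin{align*}
  \blfsym(\bu-\bu_\cP,\bv-\bu_\cP)\le \tfrac12\dual{\lambda}{v-u_\cP}_{\cV^*\times\cV}+\tfrac12\dual{\mu-\lambda_\cP}{u-g}_{\cV^*\times\cV}.
\end{align*}

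The step I expect to require the most care is the splitting that introduces the second test function $\tilde{\bv}=(\tilde v,\tilde\btau,\tilde\mu)\in\cKsym$. Applying the boxed identity with $\boldsymbol{w}=\tilde{\bv}-\bu$ shows that the continuous variational inequality~\eqref{eq:vi:sym} is equivalent to the complementarity bound $\tfrac12\dual{\lambda}{\tilde v-u}_{\cV^*\times\cV}+\tfrac12\dual{\tilde\mu-\lambda}{u-g}_{\cV^*\times\cV}\ge 0$ for all $\tilde{\bv}\in\cKsym$. Decomposing $v-u_\cP=(v-u)+(u-\tilde v)+(\tilde v-u_\cP)$ and $\mu-\lambda_\cP=(\mu-\lambda)+(\lambda-\tilde\mu)+(\tilde\mu-\lambda_\cP)$ in the previous bound, the middle contribution is precisely the negative of this complementarity expression and may therefore be discarded, yielding
\begin{align*}
  \blfsym(\bu-\bu_\cP,\bv-\bu_\cP) &\le \tfrac12\bigl(\dual{\lambda}{v-u}_{\cV^*\times\cV}+\dual{\mu-\lambda}{u-g}_{\cV^*\times\cV}\bigr) \\
  &\qquad + \tfrac12\bigl(\dual{\lambda}{\tilde v-u_\cP}_{\cV^*\times\cV}+\dual{\tilde\mu-\lambda_\cP}{u-g}_{\cV^*\times\cV}\bigr).
\end{align*}

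To close, Young's inequality absorbs $|\bu-\bu_\cP|_\cU|\bu-\bv|_\cU$ into the coercivity-provided left-hand side, after which taking absolute values on each of the two duality brackets and minimizing independently over $\bv\in\cKsymDisc$ and $\tilde{\bv}\in\cKsym$ delivers the claimed estimate. The argument mirrors the elliptic proof of~\cite[Theorem~8]{LSQobstacle}; the only genuine subtlety is the three-way splitting just described, in which $\bv$ compensates the ``test against $(u,\lambda)$'' contribution and $\tilde{\bv}$ compensates the ``test against $(u_\cP,\lambda_\cP)$'' contribution, so that the non-conformity $\cKsymDisc\not\subset\cKsym$ is absorbed into the second minimum.
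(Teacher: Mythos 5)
Your proof is correct and takes essentially the same route the paper indicates: the Falk--C\'ea argument for variational inequalities, as in the cited elliptic reference~\cite[Theorem~8]{LSQobstacle}. The key steps -- coercivity plus the split $\blfsym(\bu-\bu_\cP,\bu-\bu_\cP)=\blfsym(\bu-\bu_\cP,\bu-\bv)+\blfsym(\bu-\bu_\cP,\bv-\bu_\cP)$, the observation that $\blfsym(\bu,\cdot)-\rhssym(\cdot)$ reduces to the two duality terms because the three least-squares residuals of the exact solution vanish (which is exactly~\eqref{eq:fovarres}), the three-way decomposition that uses the continuous VI to drop the mixed term, and the final Young absorption -- are precisely the arguments the authors had in mind when they omitted the proof.
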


\subsubsection{Convergence rates for Lagrange finite element discretization}
We consider the simplicial partition $\cP = \cPtri$ and assume that $\cP$ is quasi-uniform with mesh-size $h$ and is generated from a tensor product mesh, see, e.g.~\cite[Section~4.1.2]{FuehrerKarkulik21}.
The next result is stated without proof. It follows similar ideas as the proof of Theorem~\ref{thm:convergenceRatesGS} which we detail below. 
However, higher regularity assumptions are required for the statements in Theorem~\ref{thm:estimates_KF}. 
This is due to the fact that an interpolation operator satisfying properties similar to $\iop$ for tensor meshes can not be constructed for the Lagrange finite elements.
For parabolic PDEs without obstacle conditions we refer to~\cite{FuehrerKarkulik21,GantnerStevenson21}.

\begin{theorem}[Lagrange finite elements on simplicial meshes]
\label{thm:estimates_KF}
Suppose $\cP = \cPtri$, so that $\cU_\cP = \cU_{\cPtri}$ and $\cKsymDisc = \cKsymDiscTri$.
Let $\bu=(u,\bsigma,\lambda)\in \cKsym$ denote the solution of~\eqref{eq:vi:sym} and let $\bu_\cP=(u_\cP,\bsigma_\cP,\lambda_\cP)\in \cKsymDisc$ denote the solution of~\eqref{eq:vi:sym:disc}.
  Suppose that $g\in H^{2}(Q)$, $f\in H^{1}(Q)$, $\bA\in W^{1,\infty}(Q)^{d\times d}$, $\bb\in W^{1,\infty}(Q)^d$, $c\in W^{1,\infty}(Q)$, and $u\in H^1((0,T);H^2(\Omega))\cap L^\infty((0,T);H^3(\Omega))\cap H^2((0,T);L^2(\Omega))$, then
\begin{equation*}
|\bu-\bu_\cP|_\cU^2 
   \lesssim
 h^{2}\left(\|u\|_{H^{1}((0,T);H^{2}(\Omega))}^{2} + \|u\|_{L^{\infty}((0,T);H^{3}(\Omega))}^{2} + \|u\|_{H^{2}((0,T);L^{2}(\Omega))}^{2} + \|f\|_{H^{1}(Q)}^{2} + \|g\|_{H^{2}(Q)}^{2}\right). 
\end{equation*} 
\end{theorem}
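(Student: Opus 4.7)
The plan is to invoke the Falk--C\'ea-type bound from Proposition~\ref{prop:error_estimate} and produce explicit comparison elements in $\cKsymDiscTri$ and $\cKsym$ whose associated terms are all $\mathcal{O}(h^2)$. Under the stated regularity one checks directly that $u\in H^2(Q)$ and $\bsigma=-\bA\gradx u\in H^1(Q)^d$; a more careful computation, using $\bA,\bb,c\in W^{1,\infty}(Q)$, $f\in H^1(Q)$, and $u\in H^1((0,T);H^2(\Omega))\cap H^2((0,T);L^2(\Omega))$, shows $\lambda=\partial_t u+\cL u-f\in H^1(Q)$.

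For the first minimum I would take $\bv=(\bar u,\bar\bsigma,\bar\lambda):=(\iop_{\mathrm{nod}}u,\,-\iop_{\mathrm{nod}}(\bA\gradx u),\,\Pi^0_\cP\lambda)$, with $\iop_{\mathrm{nod}}$ the space-time Lagrange interpolant onto $\bbS_1(\cPtri)$ and $\Pi^0_\cP$ the elementwise $L^2$-projection onto $\bbP_0(\cPtri)$. The vertex condition $\bar u(z)=u(z)\ge g(z)$ is automatic, and $\bar\lambda\ge0$ as the elementwise mean of $\lambda\ge0$, so $\bv\in\cKsymDiscTri$. Standard quasi-uniform interpolation estimates then bound each piece of $|\bu-\bv|_\cU^2$ (the $\cV$-norm of $u-\bar u$, the traces at $t=0,T$, the $L^2$-norm of $\bsigma-\bar\bsigma$, and the $L^2$-residual $\|\divst(u-\bar u,\bsigma-\bar\bsigma)-(\lambda-\bar\lambda)\|_{L^2(Q)}$) by a multiple of $h^2$ times the stated norms.

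The duality contribution in the first minimum simplifies via the continuous complementarity $\int_Q\lambda(u-g)=0$ to
\begin{align*}
\int_Q \lambda(\bar u-u) \,+\, \int_Q (\bar\lambda-\lambda)(u-g).
\end{align*}
The first integral is bounded by $\|\lambda\|_{L^2(Q)}\|\bar u-u\|_{L^2(Q)}\lesssim h^2$. For the second, the $L^2$-orthogonality of $\Pi^0_\cP$ lets me subtract $\Pi^0_\cP(u-g)$ free of charge, leading to $\lesssim h\|\lambda\|_{H^1(Q)}\cdot h\|u-g\|_{H^1(Q)}$. For the second minimum I would pick the feasible continuous triple $\bv:=(\max\{u_\cP,g\},\,0,\,\lambda_\cP)\in\cKsym$ (admissibility follows from $u_\cP|_{\partial\Omega}=0\ge g|_{\partial\Omega}$, from $\lambda_\cP\ge0$, and from $\partial_t\max\{u_\cP,g\}\in L^2(Q)$). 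The expression collapses to $\int_Q\lambda(g-u_\cP)^+$, and since both $u_\cP$ and $\iop_{\mathrm{nod}}g$ are piecewise affine with $u_\cP(z)\ge g(z)=(\iop_{\mathrm{nod}}g)(z)$ at every vertex, monotonicity of the affine reconstruction yields $u_\cP\ge\iop_{\mathrm{nod}}g$ pointwise and hence $(g-u_\cP)^+\le|g-\iop_{\mathrm{nod}}g|$; a standard Lagrange estimate then gives $\int_Q\lambda(g-u_\cP)^+\lesssim h^2\|\lambda\|_{L^2(Q)}\|g\|_{H^2(Q)}$.

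The main obstacles I anticipate are (i)~rigorously verifying $\lambda\in H^1(Q)$ from the data (which forces one to differentiate $\cL u$ in time and carefully absorb products of $W^{1,\infty}$ coefficients with the available derivatives of $u$), and (ii)~in dimension $d=3$, accommodating the fact that $H^2(Q)$ is not embedded in $C^0(\overline Q)$, so the bare Lagrange interpolant of $g$ is not well defined; in that case one would replace $\iop_{\mathrm{nod}}$ by a positivity-preserving Cl\'ement-type surrogate in the spirit of $\widetilde\iop_{\mathrm{Cl}}$ from Section~\ref{sec:quasi_inter_tensor}, which may require slightly rearranged regularity hypotheses but does not affect the $\mathcal{O}(h)$ rate.
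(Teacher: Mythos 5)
The paper does not actually supply a proof of Theorem~\ref{thm:estimates_KF}: it states explicitly that the result is ``stated without proof'' and ``follows similar ideas as the proof of Theorem~\ref{thm:convergenceRatesGS},'' with the additional remark that higher regularity is needed because no commutativity-equipped interpolant analogous to $\iop$ exists for Lagrange elements on simplicial meshes. So there is no in-paper argument to compare against line by line, and your blind attempt is, in outline, a reasonable fleshing-out of the hint: invoke Proposition~\ref{prop:error_estimate}, take the nodal interpolant of $(u,\bsigma)$ and the elementwise $L^2$-projection of $\lambda$ as the discrete competitor, exploit the vertex constraints and $\lambda\geq 0$ for feasibility, and pick $(\max\{u_\cP,g\},0,\lambda_\cP)$ in the second minimum exactly as in the proof of Theorem~\ref{thm:convergenceRatesGS}.

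There is, however, a genuine gap in the residual estimate that you do not flag. You correctly observe that $u\in H^2(Q)$ under the stated hypotheses, which makes $\|\partial_t(u-\iop_{\mathrm{nod}}u)\|_{L^2(Q)}\lesssim h$ available. But for the second piece of the residual, $\|\divx(\bsigma-\iop_{\mathrm{nod}}\bsigma)\|_{L^2(Q)}\lesssim h$ requires $\bsigma=-\bA\gradx u\in H^2(Q)^d$ (or at least $L^2((0,T);H^2(\Omega))^d\cap H^1((0,T);H^1(\Omega))^d$ after a time/space split); the regularity $\bsigma\in H^1(Q)^d$ that you assert is not enough to control $\nabla(\bsigma-\iop_{\mathrm{nod}}\bsigma)$ at all, let alone to order $h$. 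Moreover, with $\bA\in W^{1,\infty}(Q)$ only, the product rule for $D_\bx^2(\bA\gradx u)$ produces a term $D_\bx^2\bA\cdot\gradx u$ that is not controlled, so $\bsigma\in H^2(Q)$ does not follow from the stated hypotheses even though $u\in L^\infty((0,T);H^3(\Omega))$. The same obstruction reappears in the claim $\lambda\in H^1(Q)$ that you flag yourself: $\nabla_{t,\bx}\lambda$ contains $\nabla_{t,\bx}\divx(\bA\gradx u)$, which again asks for second derivatives of $\bA$. These are not two independent difficulties but the same one, and it is precisely the regularity that the Raviart--Thomas-type commuting interpolant $\iop_{\divx}$ sidesteps in the tensor-mesh proof. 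To close your argument you would either need to strengthen the hypothesis on $\bA$ to $W^{2,\infty}$, or replace the naive nodal interpolant of $\bsigma$ by a competitor $\bar\bsigma\in\bbS_1(\cPtri)^d$ engineered so that $\divx\bar\bsigma$ approximates $\divx\bsigma$ directly (which does not follow from off-the-shelf estimates since $\bbS_1(\cPtri)^d$ is not a Raviart--Thomas space). As it stands, the step ``standard quasi-uniform interpolation estimates then bound each piece $\ldots$ by a multiple of $h^2$'' is not justified for the $\divx$-part of the residual.

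Your handling of the duality contributions and of the second minimum is sound and matches what the paper does for Theorem~\ref{thm:convergenceRatesGS}; the monotonicity argument $u_\cP\geq\iop_{\mathrm{nod}}g$ on each simplex from the vertex constraints is exactly the device used there. Your remark about $d=3$ and the need for a Cl\'ement-type substitute is also pertinent.
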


\subsubsection{Convergence rates for Gantner--Stevenson finite element discretization}

In the next result we assume that the obstacle vanishes on the spatial boundary. 
For the case that $g <0$ in a subregion of ${(0,T)\times \partial\Omega}$ we employ the operator $\widetilde\iop_{\mathrm{Cl}}$ to approximate the obstacle.
\begin{theorem}[Gantner--Stevenson element on conforming prismatic meshes]\label{thm:convergenceRatesGS}
  Suppose $\cP = \cPten$, so that $\cU_\cP = \cU_{\cPten}$ and $\cKsymDisc = \cKsymDiscTen^\jop$ with $\jop = \iop_{\mathrm{Cl}}$ and $g|_{(0,T)\times\partial\Omega} = 0$.
Let $\bu=(u,\bsigma,\lambda)\in \cKsym$ denote the solution of~\eqref{eq:vi:sym} and let $\bu_\cP=(u_\cP,\bsigma_\cP,\lambda_\cP)\in \cKsymDisc$ denote the solution of~\eqref{eq:vi:sym:disc}.
Suppose that $g \in L^2( (0,T);H^2(\Omega))\cap H^j( (0,T);L^2(\Omega))$, $\bA\in W^{1,\infty}(Q)^{d\times d}$, $\bb\in W^{1,\infty}(Q)^{d}$, $c\in W^{1,\infty}(Q)$,
and $u\in H^1( (0,T);H^1(\Omega))\cap L^2( (0,T);H^2(\Omega))\cap H^j( (0,T);L^2(\Omega))$ for $j=1$ or $j=2$.
Then, 
\begin{align}\label{eq:GSrates}
\begin{split}
  |\bu-\bu_\cP|_\cU &\lesssim  h_\bx(\|D_\bx^2u\|_{L^2(Q)}  + \|\gradx u\|_{L^2(Q)}+ \|D_\bx^2 g\|_{L^2(Q)} + \|\partial_t\gradx u\|_{L^2(Q)} + \|\lambda\|_{L^2(Q)}) 
  \\
  &\quad + h_t^{j/2}(\|\partial_t^j u\|_{L^2(Q)} + \|\partial_t^j g\|_{L^2(Q)} + \|\lambda\|_{L^2(Q)})
  \\
  &\quad + \frac{h_t^j}{h_\bx} \|\partial_t^j u\|_{L^2(Q)} + h_t(\|\partial_t\gradx u\|_{L^2(Q)} + \|\gradx u\|_{L^2(Q)})
  + \|(1-\Pi_{0,1})f\|_{L^2(Q)}.
\end{split}
\end{align}
\end{theorem}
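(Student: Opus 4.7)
The plan is to invoke Proposition~\ref{prop:error_estimate} with explicit candidates in its two minima. For the first minimum I will take $\bv_\cP:=\iop\bu = (\iop_{\mathrm{Cl}}u,\iop_2\bu,\Pi_{0,0}\lambda)$ built from the commuting operator $\iop$ of Section~\ref{sec:quasi_inter_tensor}; for the second minimum I will take $\bv=(\max\{u,u_\cP\},\btau_0,\lambda_\cP)$, with $\btau_0\in L^{2}(Q)^{d}$ chosen so that $\divst(\max\{u,u_\cP\},\btau_0)-\lambda_\cP\in L^{2}(Q)$ (possible since $\divx\colon L^{2}(Q)^{d}\to\cV^{*}$ is surjective, e.g.\ by solving a spatial Poisson problem). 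Admissibility of $\iop\bu\in\cKsymDiscTen^{\iop_{\mathrm{Cl}}}$ follows from positivity preservation of $\iop_{\mathrm{Cl}}$ (Lemma~\ref{lem:propCL}) applied to $u-g\geq 0$, which yields $\iop_{\mathrm{Cl}}u\geq\iop_{\mathrm{Cl}}g=\jop g$ at every vertex, together with $\Pi_{0,0}\lambda\geq 0$; admissibility of the second choice in $\cKsym$ follows from $\max\{u,u_\cP\}\geq u\geq g$, the vanishing boundary trace of both $u$ and $u_\cP$, and $\lambda_\cP\geq 0$.

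For the best-approximation contribution $|\bu-\iop\bu|_\cU^{2}$, I estimate the five summands of $|\cdot|_\cU$ separately: $\|u-\iop_{\mathrm{Cl}}u\|_\cV$ from the gradient estimate of Lemma~\ref{lem:propCL} with $k=2$, producing the $h_\bx$-, $h_t^{j/2}$- and $h_t^{j}/h_\bx$-terms, and $\|\bsigma-\iop_2\bu\|_{L^{2}(Q)}$ from the bound stated right after the definition of $\iop$, combined with Lemmas~\ref{lem:iop2} and~\ref{lem:propCL} (yielding in particular the summand $h_\bx\|\lambda\|_{L^{2}(Q)}$ through the $\|\mu\|_{L^{2}(Q)}$-term). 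The divergence residual collapses, via the commutativity $\divst(\iop_1,\iop_2)-\iop_3=\Pi_{0,1}(\divst(u,\bsigma)-\lambda)$, to $\|(1-\Pi_{0,1})(f-\bb\cdot\gradx u-cu)\|_{L^{2}(Q)}$, which is split using the $W^{1,\infty}$-regularity of $\bb,c$ into $\|(1-\Pi_{0,1})f\|_{L^{2}(Q)}$ plus $O(h_\bx+h_t)$-times-Bochner-norms of $u$. The two endpoint terms reduce, by the nodal-in-time property of $\iop_{t,\mathrm{nod}}$, to $\|(1-\iop_{\bx,\mathrm{Cl}})u(t^{*})\|_{L^{2}(\Omega)}$ for $t^{*}\in\{0,T\}$, and the embedding $H^{1}((0,T);H^{1}(\Omega))\hookrightarrow C^{0}([0,T];H^{1}(\Omega))$ together with the spatial Cl\'ement estimate closes this bound.

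Both consistency terms in Proposition~\ref{prop:error_estimate} are treated by systematic use of the complementarity identity, which for $\lambda\in L^{2}(Q)$ implies that $\lambda$ vanishes a.e.\ on $\{u>g\}$. For the first term, substituting $\bv_\cP$ and invoking complementarity rewrites it as $\langle\lambda,(\iop_{\mathrm{Cl}}-1)(u-g)\rangle + \langle\lambda,(\iop_{\mathrm{Cl}}-1)g\rangle - \langle(1-\Pi_{0,0})\lambda,(1-\Pi_{0,0})(u-g)\rangle$, where the last piece arises from the self-adjointness of $\Pi_{0,0}$ and one further orthogonality identity; Cauchy--Schwarz and Lemma~\ref{lem:propCL} then deliver the $\|\lambda\|_{L^{2}(Q)}\cdot(h_\bx^{2}+h_t^{j})$-pattern applied to $u-g$ and $g$. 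For the second term, the choice $\mu=\lambda_\cP$ kills its second summand, leaving $\langle\lambda,(u-u_\cP)^{+}\rangle$; since $\lambda$ is supported on $\{u=g\}$ where $(u-u_\cP)^{+}=(g-u_\cP)^{+}\leq (g-\jop g)^{+}$ (using the pointwise majoration $u_\cP\geq\jop g$, which holds because $\bbS_{0,1}(\cPten)$ is affine in the spatial variable on each prism, so the vertex condition of $\cKsymDiscTen^{\iop_{\mathrm{Cl}}}$ promotes to a pointwise one), Cauchy--Schwarz yields $\|\lambda\|_{L^{2}(Q)}\|g-\jop g\|_{L^{2}(Q)}$, again controlled by Lemma~\ref{lem:propCL}.

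The hardest step will be identifying the right test element for the second consistency term: the naive choice $\bv=\bu$ leaves a term $\langle\lambda,u-u_\cP\rangle$ without usable decay, whereas the substitution $v=\max\{u,u_\cP\}$ together with the vertex-to-pointwise promotion $u_\cP\geq\jop g$ is exactly what transfers the obstacle-interpolation gap $g-\jop g$ to the right-hand side, thereby producing the extra $h_\bx\|D_\bx^{2}g\|_{L^{2}(Q)}$ and $h_t^{j/2}\|\partial_t^{j}g\|_{L^{2}(Q)}$ contributions in~\eqref{eq:GSrates}. Once the three estimates are combined, Young's inequality is applied to absorb the cross-factors and taking the square root delivers the stated bound.
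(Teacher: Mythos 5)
Your overall strategy is the same as the paper's: plug into Proposition~\ref{prop:error_estimate} with $\iop\bu$ in the first minimum, split the duality terms, and use a $\max$-function for the second minimum. Your choice $v=\max\{u,u_\cP\}$ for the second minimum is a detour compared to the paper's cleaner choice $v=\max\{g,u_\cP\}$: with the latter one directly obtains $\dual{\lambda}{(g-u_\cP)^+}_{\cV^*\times\cV}$ without needing to invoke the complementarity support argument (though your route is valid as well, given $\lambda\in L^2(Q)$ so that $\lambda$ indeed vanishes a.e.\ on $\{u>g\}$, and given the vertex-to-pointwise promotion $u_\cP\geq\jop g$, which you correctly justify from the tensor-product $\bbP_{1,1}$ structure).

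There is, however, a genuine gap in your treatment of the first consistency term, specifically the piece $\dual{\Pi_{0,0}\lambda-\lambda}{u-g}_{\cV^*\times\cV}$, which you rewrite via orthogonality as $\langle(1-\Pi_{0,0})\lambda,(1-\Pi_{0,0})(u-g)\rangle$. You then claim ``Cauchy--Schwarz and Lemma~\ref{lem:propCL} deliver the $\|\lambda\|_{L^2(Q)}\cdot(h_\bx^2+h_t^j)$-pattern applied to $u-g$.'' This cannot work: Lemma~\ref{lem:propCL} concerns the Cl\'ement operator $\iop_\mathrm{Cl}$, not the piecewise-constant $L^2$-projection $\Pi_{0,0}$. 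Since $\Pi_{\bx,0}$ is only a projection onto constants, Cauchy--Schwarz gives at best $\|\lambda\|_{L^2(Q)}\cdot\bigl(h_\bx\|\gradx(u-g)\|_{L^2(Q)}+h_t\|\partial_t(u-g)\|_{L^2(Q)}\bigr)$; you cannot gain $h_\bx^2$ against $\|D_\bx^2(u-g)\|$ this way (and you cannot gain anything from $(1-\Pi_{0,0})\lambda$, since $\lambda$ has no assumed regularity). After Young and taking the square root this yields only an $h_\bx^{1/2}$-rate, which is strictly weaker than the $h_\bx$-rate in~\eqref{eq:GSrates}. The paper closes this gap by invoking the sharper elementwise estimate from~\cite[Proof of Theorem~13]{LSQobstacle},
\begin{align*}
  |\ip{(1-\Pi_{\bx,0})\lambda(t)}{(u-g)(t)}_{L^2(\Omega)}| \lesssim h_\bx^2\,\|D_\bx^2(u-g)(t)\|_{L^2(\Omega)}\,\|\lambda(t)\|_{L^2(\Omega)},
\end{align*}
which exploits precisely the complementarity structure that you use elsewhere: on any spatial cell where $\lambda\not\equiv 0$, the nonnegative function $u-g$ attains a zero, so its cell average is $O(h_\bx^2)$ rather than the generic $O(h_\bx)$. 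That extra power of $h_\bx$ is essential for the stated rate and is the one missing idea in your argument; without it the first consistency term does not reach the claimed order.
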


Before we give a proof of Theorem~\ref{thm:convergenceRatesGS} some remarks are in order. 

\begin{remark}
  If $f$ is elementwise regular, i.e., $f|_{K_t\times K_\bx} \in H^1(K_t;L^2(K_\bx))\cap L^2( K_t;H^k(K_\bx))$ for all $K_t\times K_\bx\in\cPten$, where either $k=1$ or $k=2$, then
  \begin{align*}
    \|(1-\Pi_{0,1})f\|_{L^2(Q)} \lesssim h_\bx^k \Big(\sum_{P\in\cPten} \|D_\bx^k f\|_{L^2(P)}^2\Big)^{1/2}
    + h_t \Big(\sum_{P\in\cPten} \|\partial_t f\|_{L^2(P)}^2\Big)^{1/2}.
  \end{align*}
\end{remark}
\begin{remark}
  If $j=1$ in Theorem~\ref{thm:convergenceRatesGS} then estimate~\eqref{eq:GSrates} suggests parabolic scaling $h_t \eqsim h_\bx^2$.

  Some estimates in the proof below are not sharp and that is why we assume additional regularity, e.g., $\partial_t\gradx u \in L^2(Q)^d$.
  Using the operator from~\cite[Section~5.1]{StevensonStorn23} (denoted $\iop_X^\otimes$ there) instead of $\iop_\mathrm{Cl}$ would get rid of these assumptions. When estimating $\|\cdot\|_{\cV^*}$ with $\|\cdot\|_{L^2(Q)}$ we loose a possible power of $h_\bx$, i.e.,
  \begin{align*}
    \|\partial_t(1-\iop_{\bx,\mathrm{Cl}})u\|_{\cV^*} \leq \|(1-\iop_{\bx,\mathrm{Cl}})\partial_tu\|_{L^2(Q)} \lesssim h_\bx\|\partial_t\gradx u\|_{L^2(Q)}
  \end{align*}
  while for the operator denoted with $\iop_x$ in~\cite{StevensonStorn23} one can show that
  \begin{align*}
    \|\partial_t(1-\iop_{x})u\|_{\cV^*} \lesssim h_\bx\|(1-\iop_{x})\partial_tu\|_{L^2(Q)} \lesssim h_\bx\|\partial_t u\|_{L^2(Q)}.
  \end{align*}
  However, $\iop_x$ and therefore $\iop_X^\otimes = \iop_{t,\mathrm{nod}}\circ\iop_x$ do not preserve non-negativity which is a crucial property in our analysis of the variational inequality.
\end{remark}
\begin{remark}
  In the analysis operator $\iop_{\mathrm{Cl}}$ could be replaced by the nodal interpolation $\iop_{\mathrm{nod}}$ but requires higher regularity assumptions (for $d>1$). To see this,  we note that we have to estimate
  \begin{align*}
    \|\partial_t(u-\iop_{\bx,\mathrm{nod}}u)\|_{\cV^*} \leq \|(1-\iop_{\bx,\mathrm{nod}})\partial_tu\|_{L^2(Q)} \lesssim h_\bx^2\|\partial_t D_\bx^2u\|_{L^2(Q)}.
  \end{align*}
  Thus, we would additionally require that $u \in H^1( (0,T); H^2(\Omega))$ instead of $u\in L^2( (0,T);H^2(\Omega))$ in Theorem~\ref{thm:convergenceRatesGS}.
\end{remark}

\begin{proof}[Proof of Theorem~\ref{thm:convergenceRatesGS}]
  In view of the estimate from Proposition~\ref{prop:error_estimate} 
  we need to establish a bound for
  \begin{align}\label{eq:proof:convrates1}
    |\bu-\bv|_\cU^2 + \left\lvert\dual{\lambda}{v-u}_{\cV^*\times \cV} + \dual{\mu-\lambda}{u-g}_{\cV^*\times \cV}\right\rvert
  \end{align}
  for some $\bv=(v,\btau,\mu)\in\cKsymDisc$. To do so, we choose $\bv= \iop(u,\bsigma,\lambda)$. 
  Note that $\iop_{\mathrm{Cl}}u(z) \geq \iop_{\mathrm{Cl}}g(z)$ for all nodes of the partition $\cPten$ and $\Pi_{0,0}\lambda\geq 0$ since $\lambda\geq 0$. Therefore, $\bv\in \cKsymDisc$.
  The results on the quasi-interpolation operators then yield
  \begin{align*}
    |\bu-\bv|_\cU &\lesssim \|\gradx(1-\iop_{\mathrm{Cl}})u\|_{L^2(Q)} + \|(1-\iop_{2})\bsigma\|_{L^2(Q)}  + \|\partial_t(1-\iop_{\mathrm{Cl}})u\|_{\cV^*}\\
    & \quad \quad + \|(1-\Pi_{0,1})(\divst(u,\bsigma)-\lambda)\|_{L^2(Q)}
    \\
    &\lesssim h_\bx\|D_\bx^2 u\|_{L^2(Q)} + \frac{h_t^j}{h_\bx}\|\partial_t^ju\|_{L^2(Q)} 
    + h_\bx \|\gradx \bsigma\|_{L^2(Q)} + h_t\|\partial_t\bsigma\|_{L^2(Q)} + h_\bx\|\partial_t \gradx u\|_{L^2(Q)} 
   \\
   &\qquad + h_\bx\|\lambda\|_{L^2(Q)}
   + \|(1-\Pi_{0,1})f\|_{L^2(Q)}  + \|(1-\Pi_{0,1})(\bb\cdot\gradx u + c u)\|_{L^2(Q)}.
  \end{align*}
  In the last estimate we have also used that $\divst(u,\bsigma)-\lambda = f-\bb\cdot\gradx u -cu$.
  The last term on the right-hand side is further estimated with
  \begin{align*}
   & \|(1-\Pi_{0,1})(\bb\cdot\gradx u + c u)\|_{L^2(Q)} \\
   & \leq \|(1-\Pi_{t,0})(\bb\cdot\gradx u + c u)\|_{L^2(Q)} + \|\Pi_{t,0}(1-\Pi_{\bx,1})(\bb\cdot\gradx u + c u)\|_{L^2(Q)} \\
    & \lesssim h_t(\|\partial_t\gradx u\|_{L^2(Q)} + \|\partial_t u\|_{L^2(Q)} + \|\gradx u\|_{L^2(Q)} + \|u\|_{L^2(Q)}) \\
    & \quad + h_\bx (\|D_\bx^2 u\|_{L^2(Q)} + \|\gradx u\|_{L^2(Q)} + \|u\|_{L^2(Q)}).
  \end{align*}
  This proves that the norm in~\eqref{eq:proof:convrates1}
  can be bounded as follows,
  \begin{align*}
    |\bu-\bv|_\cU &\lesssim h_\bx\|D_\bx^2 u\|_{L^2(Q)} + \frac{h_t^j}{h_\bx}\|\partial_t^ju\|_{L^2(Q)} 
    + (h_\bx+h_t) \|\partial_t\gradx u\|_{L^2(Q)} + h_\bx\|\lambda\|_{L^2(Q)}
   \\
   &\qquad + (h_t + h_\bx)\|\gradx u\|_{L^{2}(Q)} + \|(1-\Pi_{0,1})f\|_{L^2(Q)}.
  \end{align*}
  Next, we bound the duality terms $|\dual{\lambda}{\iop_{\mathrm{Cl}}u-u}_{\cV^*\times \cV}|$ and $|\dual{\Pi_{0,0}\lambda-\lambda}{u-g}_{\cV^*\times \cV}|$.
  First, 
  \begin{align*}
    |\dual{\lambda}{\iop_{\mathrm{Cl}}u-u}_{\cV^*\times \cV}|
    &\leq \|\lambda\|_{L^2(Q)}\|(1-\iop_{\mathrm{Cl}})u\|_{L^2(Q)} \lesssim (h_\bx^2\|D_\bx^2 u\|_{L^2(Q)} + h_t^j\|\partial_t^j u\|_{L^2(Q)})\|\lambda\|_{L^2(Q)}.
  \end{align*}
  To estimate $|\dual{\Pi_{0,0}\lambda-\lambda}{u-g}_{\cV^*\times \cV}|$ we write $(1-\Pi_{0,0})\lambda = (1-\Pi_{\bx,0})\lambda + (1-\Pi_{t,0})\Pi_{\bx,0}\lambda$.
  With this splitting we further estimate
  \begin{align*}
    |\dual{\Pi_{0,0}\lambda-\lambda}{u-g}_{\cV^*\times \cV}| &\leq \|\lambda\|_{L^2(Q)}\|\Pi_{\bx,0}(1-\Pi_{t,0})(u-g)\|_{L^2(Q)}
    + |\ip{(1-\Pi_{\bx,0})\lambda}{u-g}_{L^2(Q)}|
    \\ 
    &\lesssim h_t\|\partial_t(u-g)\|_{L^2(Q)}\|\lambda\|_{L^2(Q)} + |\ip{(1-\Pi_{\bx,0})\lambda}{u-g}_{L^2(Q)}|.
  \end{align*}
  Following the same lines of argumentation as in~\cite[Proof of Theorem~13]{LSQobstacle} one shows that
  \begin{align*}
    |\ip{(1-\Pi_{\bx,0})\lambda(t)}{(u-g)(t)}_{L^2(\Omega)}| \lesssim h_\bx^2\|D_\bx^2(u-g)(t)\|_{L^2(\Omega)}\|\lambda(t)\|_{L^2(\Omega)}
    \quad\text{for a.e. }t\in(0,T).
  \end{align*}
  We conclude that
  \begin{align*}
    |\dual{\Pi_{0,0}\lambda-\lambda}{u-g}_{\cV^*\times \cV}| &\lesssim h_\bx^2(\|D_\bx^2(u-g)\|_{L^2(Q)}^2 + \|\lambda\|_{L^2(Q)}^2)
      + h_t (\|\partial_t(u-g)\|_{L^2(Q)}^2 + \|\lambda\|_{L^2(Q)}^2).
  \end{align*}
  In view of Proposition~\ref{prop:error_estimate} it only remains to estimate
  \begin{align*}
    \min_{\bv=(v,\btau,\mu)\in \cKsym} \left\lvert \dual{\lambda}{v-u_{\cP}}_{\cV^*\times \cV} + \dual{\mu-\lambda_{\cP}}{u-g}_{\cV^*\times \cV} \right\rvert.
  \end{align*}
  Set $\bv = (\max\{g,u_\cP\},0,\lambda_\cP)$ and note that $\bv\in\cKsym$. Therefore, 
  \begin{align*}
    \left\lvert \dual{\lambda}{v-u_{\cP}}_{\cV^*\times \cV} + \dual{\mu-\lambda_{\cP}}{u-g}_{\cV^*\times \cV} \right\rvert
    &= \left\lvert \dual{\lambda}{\max\{g,u_\cP\}-u_{\cP}}_{\cV^*\times \cV}\right\rvert
    \\
    &\lesssim \|\lambda\|_{L^2(Q)} \|\max\{g-u_\cP,0\}\|_{L^2(Q)}.
  \end{align*}
Since $u_\cP\geq \iop_{\mathrm{Cl}}g$ we have $|\max\{g-u_\cP,0\}|\leq |g-\iop_{\mathrm{Cl}}g|$ on $Q$
and, consequently, 
  \begin{align*}
    \|\max\{g-u_\cP,0\}\|_{L^2(Q)} &\lesssim \|(1-\iop_{\mathrm{Cl}})g\|_{L^2(Q)}
    \lesssim h_\bx^2\|D_\bx^2 g\|_{L^2(Q)} + h_t\|\partial_t g\|_{L^2(Q)}.
  \end{align*}
  Combining all previous estimates together with Proposition~\ref{prop:error_estimate} finishes the proof.
\end{proof}

\subsection{A posteriori error estimator}\label{sec:aposteriori}
In this section, we define an a posteriori error estimator and prove its reliability for a discrete solution $(u_\cP,\bsigma_\cP,\lambda_\cP)$. 
Define the error estimators 
\begin{align*}
  \rho_p^2 &:= \|\bA^{1/2}\gradx(g-u_\cP)^+\|_{L^2(Q)}^2 + \left(\frac{\alpha}{\Lambda} + \frac{\|\bb\|_{L^\infty(Q)}^2 + \|c\|_{L^\infty(Q)}^2\Lambda}{\alpha} \right)\|(g-u_\cP)^+\|_{L^2(Q)}^2 \\
  &\qquad\qquad + \alpha^{-1}\|\partial_t(g-u_\cP)^+\|_{\cV^*}^2 + \|(g-u_\cP)^+(0)\|_{L^2(\Omega)}^2 + \|(g-u_\cP)^+(T)\|_{L^2(\Omega)}^2, \\
  \rho_c^2 &:= \ip{\lambda_\cP}{(u_\cP-g)^+}_{L^2(Q)},
\end{align*}
where $\rho_{p}$ measures the penetration of the obstacle contact condition and $\rho_{c}$ measures a violation of the obstacle complementarity condition for the discrete solution components $u_{\cP}$, $\lambda_{\cP}$.
Finally, we introduce the residual term
\begin{align*}
  \rho_{r}^{2}  := &\, \frac{\Lambda}{\alpha} \|f- \divst(u_{\mathcal{P}},\bsigma_{\cP}) - \bb\cdot\nabla_{\bx}u_{\cP} - cu_{\mathcal{P}} + \lambda_{\cP} \|_{L^{2}(Q)}^{2} \\
 &+ \|\bA^{-1/2}\bsigma_{\mathcal{P}} + \bA^{1/2}\nabla_{\bx}u_{\mathcal{P}}\|_{L^{2}(Q)}^{2} + \|u_{0} - u_{\cP}(0)\|_{L^{2}(\Omega)}^{2}.
\end{align*}

We are ready to present and prove the main result of this section.

\begin{theorem}[reliability]\label{thm:reliability}
  Let $\bu=(u,\bsigma,\lambda)\in \cKsym$ denote the solution of~\eqref{eq:vi:sym} and let $\bu_\cP=(u_\cP,\bsigma_\cP,\lambda_\cP)\in \cKsymDisc$ denote the solution of~\eqref{eq:vi:sym:disc}. Then, 
  \begin{align*}
   |\bu-\bu_\cP|_\cU^2 \lesssim \rho_{r}^{2} + \rho_{p}^{2} + \rho_{c}^{2}.
  \end{align*}
  The hidden constant is independent of continuous and discrete solutions.
\end{theorem}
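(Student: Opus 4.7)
The plan is to start from the $|\cdot|_\cU$-coercivity of $\blfsym$ proved in Lemma~\ref{lem:propblfsym}, which gives
\begin{align*}
|\bu-\bu_\cP|_\cU^2 \lesssim \blfsym(\bu-\bu_\cP,\bu-\bu_\cP).
\end{align*}
Because $\bu=(u,\bsigma,\lambda)$ satisfies~\eqref{eq:fo:a}--\eqref{eq:fo:b} and~\eqref{eq:fo:e}, the three least-squares residuals evaluated at $\bu$ vanish, so that expanding $\blfsym$ term by term yields
\begin{align*}
\blfsym(\bu-\bu_\cP,\bu-\bu_\cP) = \rho_r^2 + \dual{\lambda-\lambda_\cP}{u-u_\cP}_{\cV^*\times\cV}.
\end{align*}
The entire task is thus to bound the duality pairing by $\rho_p^2+\rho_c^2$ modulo a small multiple of $|\bu-\bu_\cP|_\cU^2$ which can be absorbed on the left.

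Next I would exploit the continuous complementarity $\dual{\lambda}{u-g}=0$ to write
\begin{align*}
\dual{\lambda-\lambda_\cP}{u-u_\cP} = \dual{\lambda}{g-u_\cP} + \ip{\lambda_\cP}{u_\cP-g}_{L^2(Q)} + \dual{\lambda_\cP}{g-u},
\end{align*}
discard the last term (which is nonpositive since $\lambda_\cP\geq 0$ and $g\leq u$ in $Q$), and decompose $u_\cP-g=(u_\cP-g)^+-(g-u_\cP)^+$ and $g-u_\cP=(g-u_\cP)^+-(g-u_\cP)^-$. Using $\lambda_\cP,\lambda\geq 0$ and identifying $\ip{\lambda_\cP}{(u_\cP-g)^+}_{L^2(Q)}=\rho_c^2$, this produces
\begin{align*}
\dual{\lambda-\lambda_\cP}{u-u_\cP} \leq \rho_c^2 + \dual{\lambda}{(g-u_\cP)^+} - \ip{\lambda_\cP}{(g-u_\cP)^+}_{L^2(Q)}.
\end{align*}

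The crux of the argument is the treatment of $\dual{\lambda}{(g-u_\cP)^+}$. Inserting $\lambda=\partial_t u+\cL u-f$ and adding and subtracting $\partial_t u_\cP+\cL u_\cP$ splits this into a parabolic term $\dual{\partial_t(u-u_\cP)}{(g-u_\cP)^+}$, an elliptic term $\ip{\bA\gradx(u-u_\cP)}{\gradx(g-u_\cP)^+}+\ip{\bb\cdot\gradx(u-u_\cP)+c(u-u_\cP)}{(g-u_\cP)^+}$, and the discrete contribution $\dual{\partial_t u_\cP+\cL u_\cP-f}{(g-u_\cP)^+}$. A direct computation using $\eta:=\divst(u_\cP,\bsigma_\cP)+\bb\cdot\gradx u_\cP+cu_\cP-\lambda_\cP-f$ rewrites this discrete contribution as $\ip{\lambda_\cP+\eta}{(g-u_\cP)^+}-\dual{\divx(\bA\gradx u_\cP+\bsigma_\cP)}{(g-u_\cP)^+}$. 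Integration by parts in space, legitimate since $(g-u_\cP)^+\in\cV$ thanks to $g|_{(0,T)\times\partial\Omega}\leq 0$ and $u_\cP$ vanishing on the lateral boundary, turns the last divergence into $\ip{\bA\gradx u_\cP+\bsigma_\cP}{\gradx(g-u_\cP)^+}_{L^2(Q)}$, which is a $\rho_r\cdot\rho_p$-product. Most importantly, the $\ip{\lambda_\cP}{(g-u_\cP)^+}$ contribution produced here \emph{exactly cancels} the term of opposite sign from the previous step; this cancellation is the structural heart of the proof, because $\|\lambda_\cP\|_{L^2(Q)}$ is not controlled by any of the estimators.

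After the cancellation the surviving terms are bounded by Cauchy--Schwarz: the elliptic contributions by $|\bu-\bu_\cP|_\cU\cdot\rho_p$ using the $L^\infty$ bounds on $\bA,\bb,c$, and the residual contributions $\ip{\eta}{(g-u_\cP)^+}$ and $\ip{\bA\gradx u_\cP+\bsigma_\cP}{\gradx(g-u_\cP)^+}$ by $\rho_r\cdot\rho_p$-products. For the parabolic term I would apply the Bochner integration-by-parts formula of Theorem~\ref{thm:Bochner_space_prop}, which generates traces at $t=0$ and $t=T$ — all contained either in $|\bu-\bu_\cP|_\cU$ or in $\rho_p$ — together with a $\|\partial_t(g-u_\cP)^+\|_{\cV^*}\|u-u_\cP\|_\cV$ factor that is again a $\rho_p\cdot|\bu-\bu_\cP|_\cU$-product. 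Young's inequality on every cross product and absorption of $\varepsilon\,|\bu-\bu_\cP|_\cU^2$ on the left yields the claimed bound. The main technical obstacle I anticipate is the justification that $(g-u_\cP)^+\in\cW$ so that Theorem~\ref{thm:Bochner_space_prop} actually applies; this should follow from standard chain-rule/truncation results for positive parts of Bochner--Sobolev functions, exploiting $g\in\widetilde\cW\cap C^0(\overline{Q})$ and the regularity of $u_\cP$.
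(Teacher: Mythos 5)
Your proof is correct and takes essentially the same approach as the paper: coercivity reduces the task to bounding $\dual{\lambda-\lambda_\cP}{u-u_\cP}$, complementarity and sign arguments reduce this to $\rho_c^2+\dual{\lambda-\lambda_\cP}{(g-u_\cP)^+}$, and the residual decomposition plus Young's inequality and absorption finishes. The $\ip{\lambda_\cP}{(g-u_\cP)^+}_{L^2(Q)}$ cancellation you single out as the structural heart is the algebraic twin of the paper's simpler step of keeping $\lambda-\lambda_\cP$ grouped in the duality pairing and expanding via the first-order residual directly; the paper's bookkeeping is a little leaner since $\lambda_\cP$ never needs to be separated out and then cancelled, but the two are equivalent.
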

\begin{proof}
  In the proof we use the (squared) norm %
    \begin{align*}
      \enorm{\bu}_{\cU}^2 &= \|\bA^{1/2}\gradx u\|_{L^2(Q)}^2 + \|\bA^{-1/2}\bsigma\|^2 + \|u(0)\|_{L^2(\Omega)}^2 + \|u(T)\|_{L^2(\Omega)}^2
  \\
  &\qquad + \frac{\Lambda}\alpha\|\divst(u,\bsigma)+\bb\cdot\gradx u + cu -\lambda\|_{L^2(Q)}^2
  \end{align*}
which is equivalent to $|\bu|_{\cU}$ as can be seen from the arguments given in proof of Lemma~\ref{lem:propblfsym}. 
  From~\eqref{eq:proof:coercivity1}, the identities $\bsigma+\bA\gradx u = 0$, $u(0) = u_0$ it follows that
    \begin{align}\label{eq:rel_es_dual}\begin{split}
        \enorm{\bu-\bu_\cP}_{\cU}^2 \lesssim a(\bu-\bu_\cP,\bu-\bu_\cP) = \rho_{r}^{2} + \dual{\lambda - \lambda_{\cP}}{u - u_{\cP}}_{\cV^*\times \cV},
\end{split}
\end{align}
where the generic constant hidden in ``$\lesssim$'' is, particularly, independent of $\Lambda$, $\bA$, $\bb$, $c$, $\alpha$.
We write 
$$
\dual{\lambda - \lambda_{\cP}}{u - u_{\cP}}_{\cV^*\times \cV} 
=
\dual{\lambda}{u - u_{\cP}}_{\cV^*\times \cV} + \ip{\lambda_{\cP}}{u_{\cP} - u}_{L^2(Q)}
=: \mathsf{I} + \mathsf{II},
$$ 
and bound each term separately. 
To estimate $\mathsf{II}$, we use that $\lambda_{\cP}, u - g \geq 0$ in $Q$ to obtain
\begin{align*}
\mathsf{II} 
=
\ip{\lambda_{\cP}}{u_{\cP} - g}_{L^2(Q)} + \ip{\lambda_{\cP}}{g - u}_{L^2(Q)}
\leq 
\ip{\lambda_{\cP}}{u_{\cP} - g}_{L^2(Q)}.
\end{align*}
On the other hand, using $\lambda\geq 0$, $\dual{\lambda}{u-g}_{\cV^*\times \cV}=0$, 
$g\leq \max\{g,u_\cP\}\in \cV$ we have that
\begin{align*}
\mathsf{I}
= & \,
\dual{\lambda}{u - g}_{\cV^*\times \cV} + \dual{\lambda}{g - \max\{g,u_{\cP}\}}_{\cV^*\times \cV}  + \dual{\lambda}{\max\{g,u_{\cP}\} - u_{\cP}}_{\cV^*\times \cV} \\
\leq & \,
\dual{\lambda}{\max\{g,u_\cP\}  - u_\cP}_{\cV^*\times \cV} = \dual{\lambda}{(g - u_{\cP})^+}_{\cV^*\times \cV}  \\
=  & \,
\dual{\lambda - \lambda_{\cP}}{(g - u_{\cP})^+}_{\cV^*\times \cV}  + \dual{\lambda_{\cP}}{(g - u_{\cP})^+}_{\cV^*\times \cV}.
\end{align*}
Consequently, from the estimates for $\mathsf{I}$ and $\mathsf{II}$ it follows that 
\begin{align*}
\dual{\lambda - \lambda_{\cP}}{u - u_{\cP}}_{\cV^*\times \cV} 
\leq & \,
\dual{\lambda - \lambda_{\cP}}{(g - u_{\cP})^+}_{\cV^*\times \cV} + \ip{\lambda_{\cP}}{(g - u_{\cP})^+}_{L^2(Q)} + \ip{\lambda_{\cP}}{u_{\cP} - g}_{L^2(Q)} \\
= & \, 
\dual{\lambda - \lambda_{\cP}}{(g - u_{\cP})^+}_{\cV^*\times \cV} + \ip{\lambda_\cP}{(u_\cP-g)^+}_{L^2(Q)} \\
&=\dual{\lambda - \lambda_{\cP}}{(g - u_{\cP})^+}_{\cV^*\times \cV} + \rho_{c}^{2}. 
\end{align*}
It remains to estimate $\dual{\lambda - \lambda_{\cP}}{(g - u_{\cP})^+}_{\cV^*\times \cV}$. 
Using Cauchy--Schwarz and Young inequalities gives
\begin{align}\label{eq:proof:apost:2}\begin{split}
& \dual{\lambda - \lambda_{\cP}}{(g - u_{\cP})^+}_{\cV^*\times \cV} \\
& =
\dual{-\div_{t,\bx}(u-u_{\cP},\bsigma - \bsigma_{\cP})-\bb\cdot\gradx(u-u_\cP)-c(u-u_\cP) + (\lambda - \lambda_{\cP})}{(g - u_{\cP})^+}_{\cV^*\times \cV} 
\\
&\qquad +\dual{\div_{t,\bx}(u-u_{\cP},\bsigma - \bsigma_{\cP})+\bb\cdot\gradx(u-u_\cP)-c(u-u_\cP)}{(g - u_{\cP})^+}_{\cV^*\times \cV} \\
& \leq \frac12\rho_r^2 + \frac{\alpha}{2\Lambda}\|(g - u_{\cP})^+\|_{L^{2}(Q)}^{2} + \dual{\partial_{t}(u-u_{\cP})}{(g - u_{\cP})^+}_{\cV^*\times \cV} \\
& \quad  +\dual{\div_{\bx}(\bsigma - \bsigma_{\cP})}{(g - u_{\cP})^+}_{\cV^*\times \cV} + \ip{\bb\cdot\gradx(u-u_\cP)+c(u-u_\cP)}{(g-u_\cP)^+}_{L^2(Q)}.
\end{split}
\end{align}
In what follows we estimate the last three terms on the right-hand side of~\eqref{eq:proof:apost:2} separately where for each one we use Young's inequality with parameter $\delta>0$.
For the first term integration by parts in time and $\alpha^{1/2}\|\cdot\|_{\cV} \leq \|\bA^{1/2}\gradx(\cdot)\|_{L^2(Q)}$ yields 
\begin{align*}
  \dual{\partial_t(u-u_\cP)}{(g-u_\cP)^+}_{\cV^*\times \cV} &= -\dual{(u-u_\cP)}{\partial_t(g-u_\cP)^+}_{\cV\times \cV^*}
  + \ip{u-u_\cP}{(g-u_{\cP})^+}_{L^2(\Omega)}|_0^T
  \\
  &\leq \alpha^{-1/2}\|\partial_t(g-u_\cP)^+\|_{\cV^*}\|\bA^{1/2}\gradx(u-u_\cP)\|_{L^2(Q)} \\
  &\qquad+ \|(u-u_\cP)(0)\|_{L^2(\Omega)}\|(g-u_\cP)^+(0)\|_{L^2(\Omega)} \\
  &\qquad+ \|(u-u_\cP)(T)\|_{L^2(\Omega)}\|(g-u_\cP)^+(T)\|_{L^2(\Omega)} \\
  &\leq \frac{\delta^{-1}}{2\alpha} \|\partial_t(g-u_\cP)^+\|_{\cV^*}^2 + \frac{\delta}2 \enorm{\bu-\bu_\cP}^2
  \\
  &\qquad + \frac{\delta^{-1}}2(\|(g-u_\cP)^+(0)\|_{L^2(\Omega)}^2+\|(g-u_\cP)^+(T)\|_{L^2(\Omega)}^2).
\end{align*}
Then, the definition of the divergence operator proves
\begin{align*}
  \dual{\divx(\bsigma-\bsigma_\cP)}{(g-u_\cP)^+}_{\cV^*\times \cV} &= \ip{\bsigma-\bsigma_{\cP}}{\gradx(g-u_\cP)^+}_{L^2(Q)}
  \\
  &\leq \|\bA^{-1/2}(\bsigma-\bsigma_\cP)\|_{L^2(Q)}\|\bA^{1/2}\gradx(g-u_\cP)^+\|_{L^2(Q)}
  \\
  &\leq \frac{\delta}2\enorm{\bu-\bu_\cP}_{\cU}^2 + \frac{\delta^{-1}}2\|\bA^{1/2}\gradx(g-u_\cP)^+\|_{L^2(Q)}^2.
\end{align*}
The final term on the right-hand side of~\eqref{eq:proof:apost:2} is estimated by
\begin{align*}
  &\ip{\bb\cdot\gradx(u-u_\cP)+c(u-u_\cP)}{(g-u_\cP)^+}_{L^2(Q)} 
  \\
  &\qquad\leq \alpha^{-1/2}\|\bb\|_{L^\infty(Q)}\|(g-u_\cP)^+\|_{L^2(Q)} \|\bA^{1/2}\gradx(u-u_\cP)\|_{L^2(Q)} \\
  &\qquad\qquad + \Lambda^{1/2}\alpha^{-1/2}\|(g-u_\cP)^+\|_{L^2(Q)}\|c\|_{L^\infty(Q)} \|\bA^{1/2}\gradx(u-u_\cP)\|_{L^2(Q)} \\
  &\qquad \leq \frac{\delta^{-1}}2\frac{\|\bb\|_{L^\infty(Q)}^2 + \Lambda\|c\|_{L^\infty(Q)}^2}{\alpha}\|(g-u_\cP)^+\|_{L^2(Q)}^2 + \delta\enorm{\bu-\bu_\cP}_\cU^2.
\end{align*}
Combining all the last estimates together with~\eqref{eq:rel_es_dual} and~\eqref{eq:proof:apost:2} shows that
\begin{align*}
  \enorm{\bu-\bu_\cP}_{\cU}^2 \lesssim \rho_r^2 + \delta^{-1} \rho_p^2 + \rho_c^2 + \delta\enorm{\bu-\bu_\cP}_\cU^2.
\end{align*}
Note that the hidden constant is independent of any quantity of interest. In particular, the proof is finished by subtracting the last term on the right-hand side for sufficiently small $\delta$.
\end{proof}

\begin{remark}\label{rem:estimator}
  We note that the estimator contribution $\|\partial_t(g-u_\cP)^+\|_{\cV^*}$ is, in general, not computable. 
  In some of the numerical examples from Section~\ref{sec:numerics} we replace $\rho$ by 
  \begin{align*}
    \widetilde\rho^2 &= \rho_r^2 + \rho_c^2 + \widetilde\rho_p^2, \\
    \widetilde\rho_p^2 &= \|\bA^{1/2}\gradx(g-u_\cP)^+\|_{L^2(Q)}^2 + \left(\frac{\alpha}{\Lambda} + \frac{\|\bb\|_{L^\infty(Q)}^2 + \|c\|_{L^\infty(Q)}^2\Lambda}{\alpha} \right)\|(g-u_\cP)^+\|_{L^2(Q)}^2 \\
    &\qquad\qquad + \frac{\Lambda}\alpha\|\partial_t(g-u_\cP)^+\|_{L^2(Q)}^2 + \|(g-u_\cP)^+(0)\|_{L^2(\Omega)}^2 + \|(g-u_\cP)^+(T)\|_{L^2(\Omega)}^2
  \end{align*}
  Note that $\rho\leq \widetilde\rho$. Therefore, $\widetilde\rho$ is a reliable, computable and localizable error estimator. 

  Alternatively, one could seek an approximation $w_\cP$ to the auxiliary problem
  \begin{align*}
      w\in \cV\colon\quad   -\Delta_\bx w = \partial_t(g-u_\cP)^+ \quad\text{in } Q,
  \end{align*}
and then use $\|w_\cP\|_{\cV}$ instead of $\|\partial_t(g-u_\cP)^+\|_{\cV^*}$. Such an approach requires taking into account an additional approximation error.

  Another possibility is to consider weighted norms. We have used such an idea for the numerical example presented in Section~\ref{sec:blackscholes}.
  \qed
\end{remark}

\begin{remark}
  In the proof of Theorem~\ref{thm:reliability} we actually have not used that $(u_\cP,\bsigma_\cP,\lambda_\cP)\in\cKsymDisc$ is the solution to~\eqref{eq:vi:sym:disc}. In fact, we only employed that $(u,\bsigma,\lambda)\in\cKsym$ solves~\eqref{eq:fo} and that $\lambda_\cP\geq 0$. 
  Therefore, $(u_\cP,\bsigma_\cP,\lambda_\cP)$ in the definition of the estimator and in the estimate in Theorem~\ref{thm:reliability} can be replaced by any $(v,\btau,\mu)\in \cU$ with $\mu\in L^2(Q)$ and $\mu\geq 0$.
  \qed
\end{remark}

\section{Numerical experiments}\label{sec:numerics}
In this section we present some numerical examples that exhibit the performance of the proposed LS-FEM. 
The experiments were performed with codes implemented in MATLAB.

We use the bulk criterion 
\begin{align*}
  \theta \tilde{\rho}^2 \leq \sum_{P\in\mathcal{M}} \tilde{\rho}(P)^2
\end{align*}
to determine a (minimal) set of elements $\mathcal{M}\subset\cP$ that are marked for refinement. 
For realizing the mesh-refinements of simplicial meshes, we employ the newest vertex bisection algorithm.
For (local) refinements of tensor meshes we use the method described in~\cite{GantnerStevenson24}, see also Section~\ref{sec:gsfem} above.
The adaptive loop consists of repeating the four major standard steps, \emph{Solve}, \emph{Estimate}, \emph{Mark}, \emph{Refine}.
The parameter $\theta = \tfrac12$ is chosen in the bulk criterion.

Discrete variational inequality~\eqref{eq:vi:sym:disc} can be written for both discretizations in the form
  \begin{align}\label{eq:VI:MatrixForm}
  \text{Find }\boldsymbol{x}\in\boldsymbol{K}\colon \quad
    \ip{\boldsymbol{S}\boldsymbol{x}}{\boldsymbol{y}-\boldsymbol{x}}_2 \geq \ip{\boldsymbol{F}}{\boldsymbol{y}-\boldsymbol{x}}_2
    \quad\forall \boldsymbol{y}\in\boldsymbol{K},
\end{align}
where $\boldsymbol{S}\in \R^{N\times N}$ is the Galerkin matrix of $\blfsym(\cdot,\cdot)$ with respect to the canonic basis of $\cU_\cP$, $\boldsymbol{F}\in\R^N$ is the discretization of $\rhssym$ with respect to the same basis, $\boldsymbol{K}\subset \R^N$ is a convex set corresponding to $\cKsymDisc$ (pointwise constraints).
Variational inequality~\eqref{eq:VI:MatrixForm} is a prototypical problem in convex optimization and several solvers exist for this type. 
Here, we employ the semi-smooth Newton method from~\cite{MR1972219}.

\subsection{One-phase Stefan problem}
Let $\Omega = (0,1)$ and $T=1$, hence, $Q = (0,1)^2$. 
We consider the solution $u(t,x)$ of the system
\begin{alignat*}{2}
  \partial_t u - \Delta_x u &\geq f &\quad&\text{in } Q, \\
  u\geq g, \quad (\partial_t u - \Delta_x u-f)(u-g) &= 0 &\quad&\text{in } Q, \\
  u(0,x) &= u_0, &\quad&\text{for } x\in \Omega,\\
  u(t,0) = u(t,1) &= 0 &\quad&\text{for }t\in (0,1)
\end{alignat*}
with data 
\begin{align*}
  h(t) &= e^t-t-1, \quad f(t,x) =-1-\partial_th(t)(1-x), \\
  u_0(x) &= -h(0)(1-x) = 0, \quad g(t,x) = -h(t)(1-x).
\end{align*}
The exact solution is given by
\begin{align*}
  u(t,x) = -h(t)(1-x) + \begin{cases}
    e^{t-x}+x-t-1 & \text{if }t>x, \\
    0 & \text{else}.
  \end{cases}
\end{align*}
We note that the coincidence set, i.e., $\{u=g\}$, is given by $\set{(t,x)\in Q}{t\leq x}$.

Furthermore, the function $\widetilde u = u + h(t)(1-x)$ is related via Duvaut's transformation $\widetilde u(t,x) = \int_0^t \Theta(s,x)\,\di s$~\cite{DuvautTrafo}
to the following one-phase Stefan problem:
\begin{align*}
  \partial_t \Theta -\Delta_x \Theta &= 0 \quad\text{for } 0<x<s(t), \, t>0, \qquad \Theta= 0 \quad\text{for } s(t)\leq x < 1, \, t>0, \\
  \Theta(0,x) &= 0 \quad\text{for } x\in (0,1), \\
  s(0) &= 0, \qquad \frac{\di s}{\di t} = -\partial_x \Theta(t,s(t)), \\
  \Theta(t,0) &= e^t-1, \qquad  \Theta(t,1) = 0, \, t>0.
\end{align*}
Note that in our situation one easily verifies that the phase separation interface is given by $s(t) = t$, see also~\cite[Section~2]{MitchellVynnycky09} for the presented and other explicit solutions of the Stefan problem.

A straightforward calculation proves that $u\in H^2(Q)$ and $f\in C^\infty(\overline Q)$. In view of Theorem~\ref{thm:convergenceRatesGS} we expect 
\begin{align*}
  |\bu-\bu_\cP|_\cU = \OO(h_\bx+h_t)
\end{align*}
when using tensor product meshes $\cP=\cPten$.
The left plot of Figure~\ref{fig:stefan}, which shows the total error compared to the overall estimator $\widetilde\rho$, confirms this behavior. 

The estimate from Theorem~\ref{thm:estimates_KF} for simplicial meshes $\cP=\cPtri$ assumes that the solution is in the space $L^\infty( (0,T); H^3(\Omega))$ which is not the case for the situation at hand. Nevertheless, we observe optimal rates, i.e., $|\bu-\bu_\cP|_\cP = \OO(h)$, see right plot of Figure~\ref{fig:stefan}.

\begin{figure}
  \begin{tikzpicture}
  \begin{groupplot}[group style={group size= 2 by 1},width=0.5\textwidth,cycle list/Dark2-6,
      cycle multiindex* list={
        mark list*\nextlist
      Dark2-6\nextlist},
      every axis plot/.append style={ultra thick},
      grid=major,
    ]
    \nextgroupplot[ymode=log,xmode=log,
      legend entries={\tiny{$|u-u_{h}|_{\cU}$},\tiny{$\widetilde\rho$}},
    legend pos=south west,xlabel={number of elements $\#\cPten$}]
    \addplot table [x=nEl,y=estimators] {data/ExampleStefanSmoothTen.dat};
    \addplot table [x=nEl,y=error] {data/ExampleStefanSmoothTen.dat};

    \logLogSlopeTriangle{0.8}{0.2}{0.3}{0.5}{black}{{\small $0.5$}};
    \nextgroupplot[ymode=log,xmode=log,
      legend entries={\tiny{$|u-u_{h}|_{\cU}$},\tiny{$\widetilde\rho$}},
    legend pos=south west,xlabel={number of elements $\#\cPtri$}]
    \addplot table [x=nE,y=estVol] {data/ExampleStefanSmoothTri.dat};
    \addplot table [x=nE,y=errTot] {data/ExampleStefanSmoothTri.dat};

    \logLogSlopeTriangle{0.9}{0.2}{0.2}{0.5}{black}{{\small $0.5$}};
  \end{groupplot}
\end{tikzpicture}
  \caption{Stefan problem}
  \label{fig:stefan}
\end{figure}
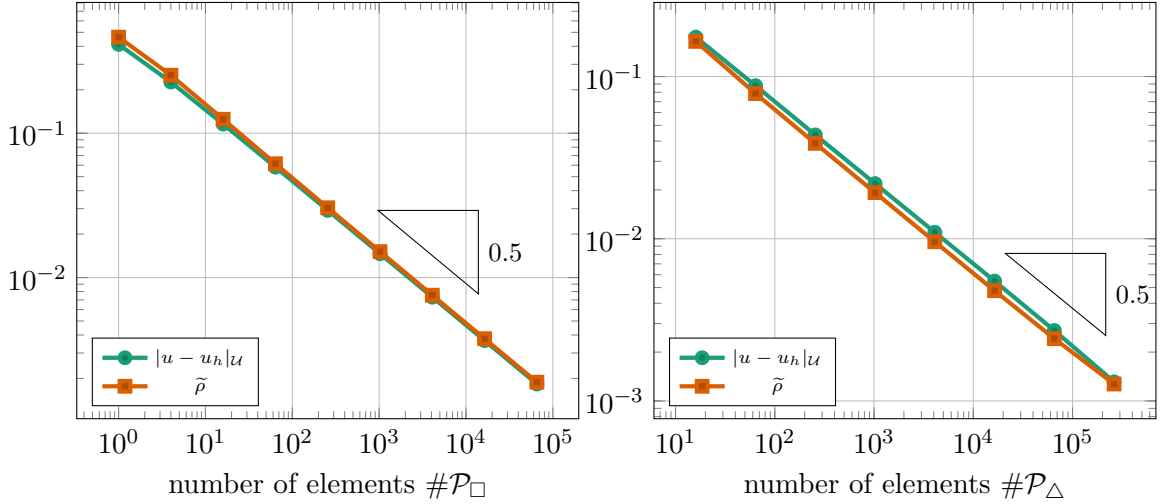

\subsection{Example with pyramid-like obstacle}\label{sec:numerics:pyramide}
For this example we consider model problem~\eqref{eq:model} with $Q = (0,1)^2$, $f = 0$, $u_0 = 0$, $\bA = \boldsymbol{I}$ (identity), $\bb = 0$, $c=0$
and the obstacle is given by
\begin{align*}
  g(t,x) = \max\{\mathrm{dist}( (t,x),\partial Q)-\tfrac14,0\}, \quad (t,x)\in Q.
\end{align*}
Here, $\mathrm{dist}( (t,x),\partial Q)$ denotes the distance of a point $(t,x)$ to the boundary of $Q$.
We note that $g\in H^{3/2-\varepsilon}(Q)\cap H_0^1(Q)$ for all $\varepsilon>0$.

\begin{figure}
  \begin{tikzpicture}
  \begin{groupplot}[group style={group size= 2 by 1},width=0.5\textwidth,cycle list/Dark2-6,
      cycle multiindex* list={
        mark list*\nextlist
      Dark2-6\nextlist},
      every axis plot/.append style={ultra thick},
      grid=major,
    ]
    \nextgroupplot[ymode=log,xmode=log,
      legend entries={\tiny{$\widetilde\rho$, unif},\tiny{$\widetilde\rho$, adap.}},
    legend pos=south west,xlabel={number of elements $\#\cPten$}]
    \addplot table [x=nEl,y=estimators] {data/ExamplePyramideUnifTen.dat};
    \addplot table [x=nEl,y=estimators] {data/ExamplePyramideAdapTen.dat};

    \logLogSlopeTriangle{0.85}{0.2}{0.5}{0.25}{black}{{\small $0.25$}};
    \logLogSlopeTriangleBelow{0.85}{0.2}{0.2}{0.5}{black}{{\small $0.5$}}
    \nextgroupplot[ymode=log,xmode=log,
      legend entries={\tiny{$\widetilde\rho$, unif},\tiny{$\widetilde\rho$, adap.}},
    legend pos=south west,xlabel={number of elements $\#\cPtri$}]
    \addplot table [x=nE,y=estVol] {data/ExamplePyramideUnifTri.dat};
    \addplot table [x=nE,y=estVol] {data/ExamplePyramideAdapTri.dat};

    \logLogSlopeTriangle{0.8}{0.2}{0.4}{0.28}{black}{{\small $0.28$}};
    \logLogSlopeTriangleBelow{0.88}{0.2}{0.1}{0.47}{black}{{\small $0.47$}}
  \end{groupplot}
\end{tikzpicture}
  \caption{Pyramide obstacle}
  \label{fig:pyramide}
\end{figure}
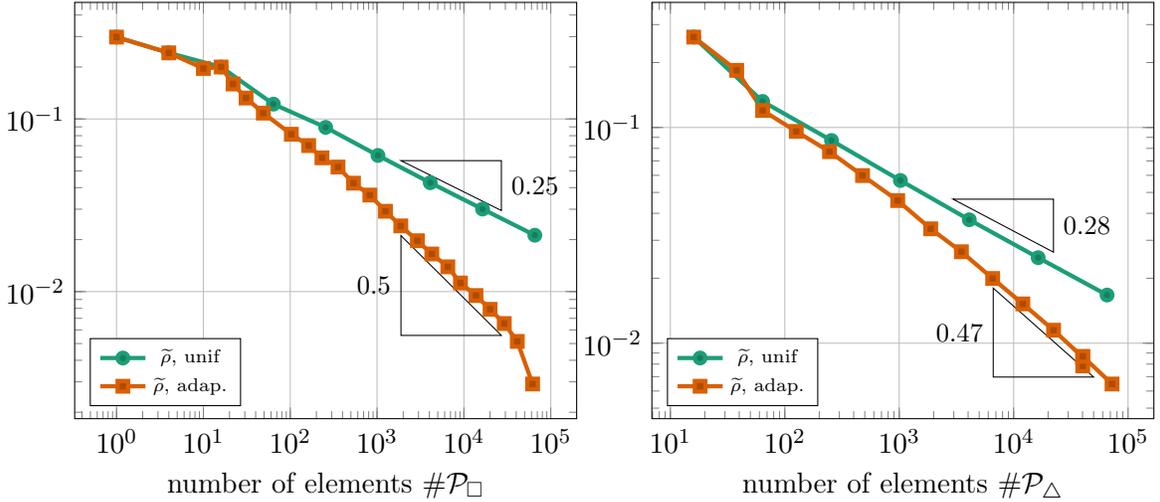

\begin{figure}
  \begin{tikzpicture}
  \begin{groupplot}[
      group style={group size=2 by 2, horizontal sep=1cm, vertical sep=2cm},
      width=0.5\textwidth,
      ylabel={$t$},
      xlabel={$x$},
    ]
    \nextgroupplot[title={\small $\#\cPtri=1884$},axis equal,hide axis]
      \addplot[patch,color=white,
        faceted color = black, line width = 0.15pt,
      patch table ={data/ExamplePyramide_ele_01884.dat}] file{data/ExamplePyramide_coo_01884.dat};
    \nextgroupplot[zmin=-0.05,zmax=0.3,colorbar]
      \addplot3[patch] table{data/ExamplePyramide_sol_01884.dat};
    \nextgroupplot[title={\small $\#\cPtri=3496$},axis equal,hide axis]
      \addplot[patch,color=white,
        faceted color = black, line width = 0.15pt,
      patch table ={data/ExamplePyramide_ele_03496.dat}] file{data/ExamplePyramide_coo_03496.dat};
    \nextgroupplot[zmin=-0.05,zmax=0.3,colorbar]
      \addplot3[patch] table{data/ExamplePyramide_sol_03496.dat};
  \end{groupplot}
\end{tikzpicture}
  \caption{Left column shows simplicial meshes generated by adaptive algorithm for the problem from Section~\ref{sec:numerics:pyramide} (vertical axis corresponds to time). Right column shows solution component $u_{\cP}$.}
  \label{fig:pyramideSol}
\end{figure}
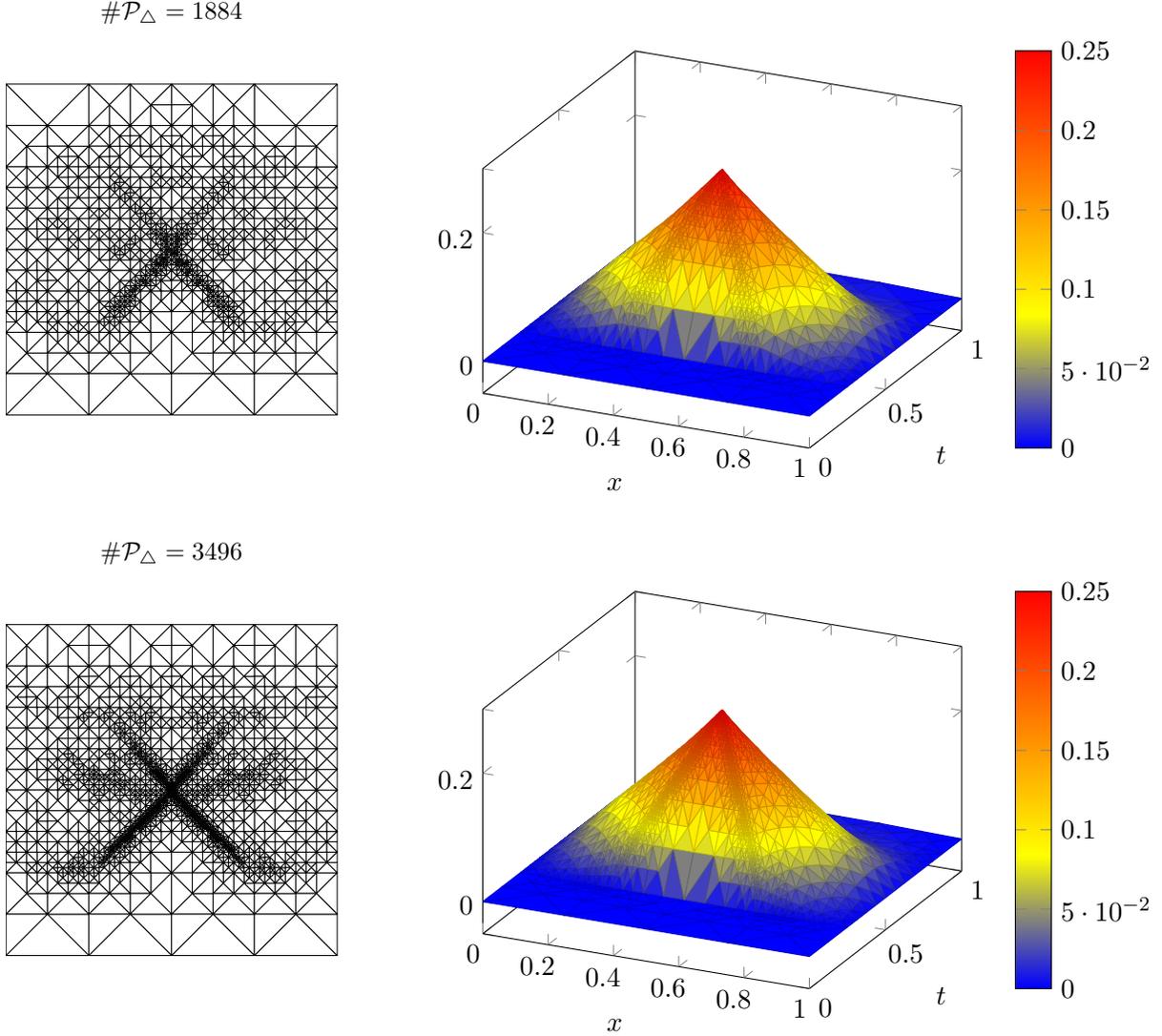

For tensor product meshes $\cPten$ and simplicial meshes $\cPtri$ we visualize convergence of the overall estimator in the left and right plot of Figure~\ref{fig:pyramide}, respectively.
For quasi-uniform meshes we observe a reduced rate for the estimator, i.e., $\widetilde\rho=\OO((\#\cPten)^{-0.25})$ resp. $\widetilde\rho = \OO((\#\cPtri)^{-0.28})$, 
whereas for a sequence of locally refined meshes generated by the adaptive algorithm with marking parameter $\theta = \frac12$ we observe about twice the rate, i.e., $\widetilde\rho = \OO( (\#\cPten)^{-0.5})$ resp. $\widetilde\rho = \OO( (\#\cPtri)^{-0.47})$.

In Figure~\ref{fig:pyramideSol} we present two meshes generated by the adaptive algorithm and corresponding solution components $u_\cP$ for $\cP=\cPtri$. 
We observe that refinements are concentrated at the lines $t=x$ and $t=1-x$ with $t<1/2$.

\subsection{American option pricing}\label{sec:blackscholes}
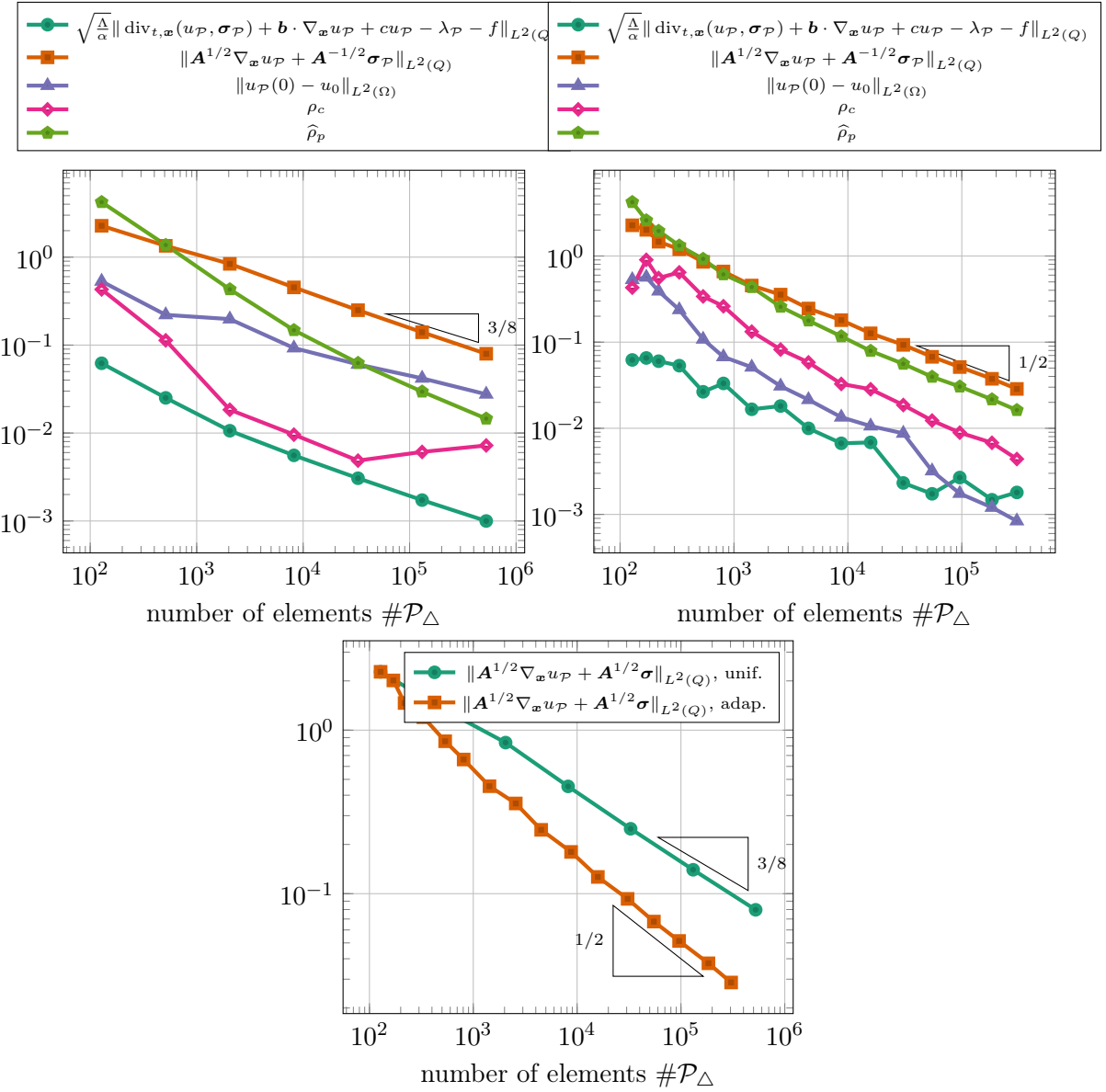
\begin{figure}
  \begin{tikzpicture}
  \begin{groupplot}[group style={group size= 2 by 1},width=0.5\textwidth,cycle list/Dark2-6,
      cycle multiindex* list={
        mark list*\nextlist
      Dark2-6\nextlist},
      every axis plot/.append style={ultra thick},
      grid=major,
      legend entries={{\tiny $\sqrt{\tfrac{\Lambda}{\alpha}}\|\divst(u_\cP,\bsigma_\cP)\-+\-\bb\cdot\gradx u_\cP+cu_\cP-\lambda_\cP-f\|_{L^2(Q)}$},{\tiny $\|\bA^{1/2}\gradx u_\cP+\bA^{-1/2}\bsigma_\cP\|_{L^2(Q)}$}, {\tiny $\|u_\cP(0)-u_0\|_{L^2(\Omega)}$}, {\tiny $\rho_c$}, {\tiny $\widehat\rho_p$}},
      legend style={at={(0.5,1.05)},anchor=south},
      legend columns=1, 
      legend style={
        /tikz/column 2/.style={
          column sep=5pt,
      }},
    ]
    \nextgroupplot[ymode=log,xmode=log,xlabel={number of elements $\#\cPtri$}
      ]
    \addplot table [x=nE,y=estDiv] {data/ExampleAOUnifTri.dat};
    \addplot table [x=nE,y=estGrad] {data/ExampleAOUnifTri.dat};
    \addplot table [x=nE,y=estU0] {data/ExampleAOUnifTri.dat};
    \addplot table [x=nE,y=estCons] {data/ExampleAOUnifTri.dat};
    \addplot table [x=nE,y=estViol] {data/ExampleAOUnifTri.dat};

    \logLogSlopeTriangle{0.9}{0.2}{0.55}{0.375}{black}{{\tiny $3/8$}};
    \nextgroupplot[ymode=log,xmode=log,xlabel={number of elements $\#\cPtri$}]
    \addplot table [x=nE,y=estDiv] {data/ExampleAOAdapTri.dat};
    \addplot table [x=nE,y=estGrad] {data/ExampleAOAdapTri.dat};
    \addplot table [x=nE,y=estU0] {data/ExampleAOAdapTri.dat};
    \addplot table [x=nE,y=estCons] {data/ExampleAOAdapTri.dat};
    \addplot table [x=nE,y=estViol] {data/ExampleAOAdapTri.dat};

    \logLogSlopeTriangle{0.9}{0.2}{0.45}{0.5}{black}{{\tiny $1/2$}};
  \end{groupplot}
\end{tikzpicture}
\begin{tikzpicture}
\begin{loglogaxis}[
width=0.49\textwidth,
cycle list/Dark2-6,
cycle multiindex* list={  mark list*\nextlist
                          Dark2-6\nextlist},
xlabel={number of elements $\#\cPtri$},
grid=major,
legend entries={{\tiny $\|\bA^{1/2}\gradx u_\cP+\bA^{1/2}\bsigma\|_{L^2(Q)}$, unif.},{\tiny $\|\bA^{1/2}\gradx u_\cP+\bA^{1/2}\bsigma\|_{L^2(Q)}$, adap.}},
legend pos=north east,
every axis plot/.append style={ultra thick},
]
    \addplot table [x=nE,y=estGrad] {data/ExampleAOUnifTri.dat};
    \addplot table [x=nE,y=estGrad] {data/ExampleAOAdapTri.dat};

    \logLogSlopeTriangle{0.9}{0.2}{0.33}{0.375}{black}{{\tiny $3/8$}};
\logLogSlopeTriangleBelow{0.8}{0.2}{0.1}{0.5}{black}{{\tiny $1/2$}}

\end{loglogaxis}
\end{tikzpicture}
  \caption{American option problem from section~\ref{sec:blackscholes} on a sequence of uniformly (top left) and adaptively (top right) refined meshes. Bottom plot compares a specific estimator contribution.}
  \label{fig:AmericanOption}
\end{figure}

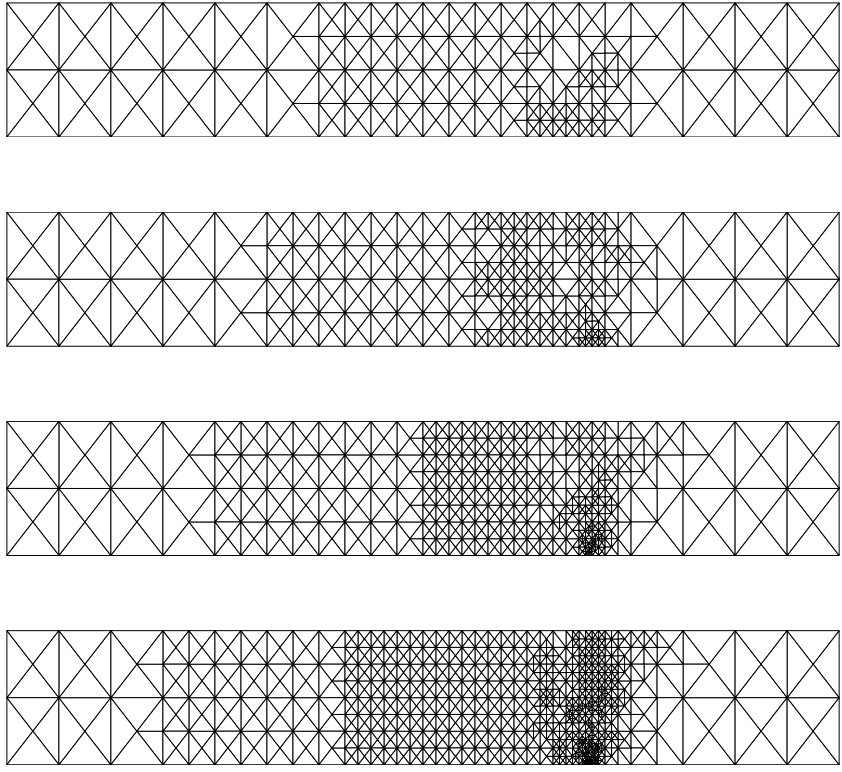
\begin{figure}
  \begin{center}
   \begin{tikzpicture}
  \begin{groupplot}[group style = {group size = 1 by 4},width=0.9\textwidth,height=0.15\textheight,ymin=0,ymax=0.5,hide axis]
    \nextgroupplot
        \addplot[patch,color=white,
          faceted color = black, line width = 0.3pt,
          patch table ={data/ExampleAO_ele_00331.dat}] file{data/ExampleAO_coo_00331.dat};
    \nextgroupplot
        \addplot[patch,color=white,
          faceted color = black, line width = 0.3pt,
          patch table ={data/ExampleAO_ele_00536.dat}] file{data/ExampleAO_coo_00536.dat};
    \nextgroupplot
        \addplot[patch,color=white,
          faceted color = black, line width = 0.3pt,
          patch table ={data/ExampleAO_ele_00808.dat}] file{data/ExampleAO_coo_00808.dat};
    \nextgroupplot
        \addplot[patch,color=white,
          faceted color = black, line width = 0.3pt,
          patch table ={data/ExampleAO_ele_01432.dat}] file{data/ExampleAO_coo_01432.dat};
  \end{groupplot}
\end{tikzpicture}
  \end{center}
  \caption{Four consecutive meshes generated by the adaptive algorithm for the American option pricing problem. Vertical axis corresponds to time.}
  \label{fig:AOmeshes}
\end{figure}
Before we present the results of this numerical experiment we describe how the American put option problem under its linear complementarity formulation falls into the framework studied in this manuscript.
For a detailed explanation and derivation of the model, we refer the reader to the book \cite{MR3677180}.

The Black--Scholes equation \cite{MR3363443} is given by
\begin{align}\label{eq:BS_eq}
  \partial_{\tau}V + \frac{\sigma^2}{2}S^2 \partial_{S}^2V + r S \partial_S V - rV = 0 \qquad \text{ in } (0,T)\times\R_+,
\end{align}
supplemented with the \emph{terminal condition} $V(T,S) = (K - S)^+$, at expiration date $T$.
In~\eqref{eq:BS_eq}, $V=V(\tau, S)$ denotes the value of the option, $S$ the current price of the underlying asset, $\sigma$ the (annual) volatility of $S$, $r\geq 0$ the (annual) interest rate, and $K$ the strike price. 
Using the time to maturity $t = T - \tau$ and $x = \log S$ as independent variables, we can rewrite \eqref{eq:BS_eq} in terms of the function $U(t,x) := V(T-\tau,e^{x})$ as follows:
\begin{align}\label{eq:BS_eq_modif}
  \partial_{t}U + \mathcal{L}_{\textrm{BS}}U:=\partial_{t}U- \frac{\sigma^2}{2} \partial_{x}^2 U - \left(r - \frac{\sigma^{2}}{2}\right) \partial_x U + r U = 0 \qquad \text{ in } (0,T)\times\R,
\end{align}
and the \emph{initial condition} $U(0,x)=(K-e^{x})^+$.
Common assumptions for this problem include a frictionless market and no arbitrage opportunities (cf. \cite[Section 1.2]{MR3677180}).
At any time $t\in (0,T)$, the option holder chooses between two possibilities: \emph{Holding} the option, in which case \eqref{eq:BS_eq_modif} applies, and \emph{exercising} the option, in which case the option value $U$ is $(K-e^{x})^+$.
Consequently, $U(t,x)$ satisfies 
\begin{align*}
  U(t,x)\geq (K-e^{x})^+ \quad \forall (t, x) \in (0,T)\times \R.
\end{align*}
The Black--Scholes equation changes to an inequality in the stopping region, i.e., $\partial_{t}U + \mathcal{L}_{\textrm{BS}}U \geq 0$ if $U(t,x) = (K-e^{x})^+$,  which leads to the complementarity condition
\begin{align*}
  (\partial_{t}U + \mathcal{L}_{\textrm{BS}}U)(U(t,x) - (K-e^{x})^+) = 0 \quad \forall (t, x) \in (0,T)\times \mathbb{R}.
\end{align*}
We note that this problem has to be solved in $\mathbb{R}$. Instead, for practical computations, one uses a bounded interval $\Omega := (L, R)$, $L<0<R$, and supplements this problem with the boundary conditions $u(t,L) = K-e^{L}$ and  $u(t,R) = 0$.
Note that this truncation introduces an error which decreases exponentially and that is negligible for a sufficiently large $R$, $|L|$, as has been also discussed in~\cite{MR2355709}.
Gathering the previous considerations, we obtain a problem within the framework of~\eqref{eq:model} with nonhomogeneous Dirichlet boundary conditions.
To recover homogeneous conditions, we introduce $\widetilde u(t,x) = (K-e^{L})^+(R-x)/(R-L)$ and consider the decomposition $U(t,x) = u(t,x) + \widetilde{u}(t,x)$.
Thus, $u$ solves model problem~\eqref{eq:model} with $f:=-\cL\widetilde u$, $g:=(K-e^x)^+-\widetilde u$, $u_0(x):=(K-e^x)^+-\widetilde u(0,x)$ and $\cL=\cL_{\mathrm{BS}}$ with coefficients $\bA = \alpha= \sigma^2/2$, $\bb =-(r-\alpha)$, $c=r$. 
  Clearly, since all coefficients are constant, $-\tfrac12\divx\bb+c = c > 0$.

We take the example from~\cite[Section~5.4]{MR2355709}
with $\sigma = 0.4$, $r=0.06$, $R = 7$, $L=-1$, and $K=100$.
From the results of numerical experiments (not shown here) we found that estimating the dual norm in the estimator $\rho_p$ by the $L^2(Q)$ norm to define the estimator $\widetilde\rho_p$, see Remark~\ref{rem:estimator}, is too rough, in the sense that $\widetilde\rho_p$ would dominate the other error contributions.
Instead, for this example we define $\widehat\rho_p$ as $\rho_p$ where we only replace $\|\partial_t(g-u_\cP)^+\|_{\cV^*}$ by the weighted norm $\|h_\cP\partial_t(g-u_\cP)^+\|_{L^2(Q)}$. 
Although, $\partial_t(g-u_\cP)^+$ is in general not a polynomial function, the replacement is motivated by the fact that $(g-u_\cP)^+$ vanishes at least at the vertices of the triangulation (where the constraints of the set $\cKsymDisc$ are enforced).

Figure~\ref{fig:AmericanOption} shows in the first row the error estimator contributions in case of uniformly and adaptively refined simplicial meshes, respectively.
We observe that $\|\bA^{1/2}u_\cP+\bA^{-1/2}\bsigma_\cP\|_{L^2(Q)}$ is the dominating contribution in the error estimator and therefore dictates the convergence rate. 
In the case of uniform refinements we obtain approximately $\OO((\#\cPtri)^{-3/8})$ convergence, which corresponds to $\OO(h^{3/4})$ with $h$ denoting the uniform mesh-size.
This has been also observed in~\cite[Sec.~5.4]{MR2355709} for the time-stepping method and for the estimator defined there (though they do not include the dual norm in $\rho_p$ or a similar term as we do in the work at hand).
From the right plot in the first row we see that $\OO((\#\cPtri)^{-1/2})$ convergence is recovered when employing the adaptive algorithm.
Figure~\ref{fig:AOmeshes} shows meshes produced by the adaptive algorithm. We observe strong refinements towards the region around $t=0$ and $x=\ln K$. We note that the obstacle function $g$ has a kink along the line $x = \ln K \approx 4.6$.

\subsection{Heat equation obstacle problem for $d=2$}\label{sec:numerics:2d}
\begin{figure}
  \begin{tikzpicture}
  \begin{groupplot}[group style={group size= 2 by 1},width=0.5\textwidth,cycle list/Dark2-6,
      cycle multiindex* list={
        mark list*\nextlist
      Dark2-6\nextlist},
      every axis plot/.append style={ultra thick},
      grid=major,
    ]
    \nextgroupplot[ymode=log,xmode=log,legend entries={{\tiny $|u-u_\cP|_{\cU}$}, {\tiny $\widetilde\rho_c$}},
      legend pos=south west,xlabel={number of elements $\#\cPtri$}
      ]
    \addplot table [x=nE,y=errUnorm] {data/Example2D.dat};
    \addplot table [x=nE,y=est] {data/Example2D.dat};

    \logLogSlopeTriangle{0.9}{0.2}{0.2}{0.333}{black}{{\tiny $1/3$}};
    \nextgroupplot[ymode=log,xmode=log,legend entries={{\tiny $\|\divst(u_\cP,\bsigma_\cP)-\lambda_{\cP}-f\|_{L^2(Q)}$}, {\tiny $\|\gradx u_\cP+\bsigma_\cP\|_{L^2(Q)}$},{\tiny $\|u_\cP(0)-u_0\|_{L^2(\Omega)}$},{\tiny $\widetilde\rho_p$},{\tiny $\rho_c$}},
      legend pos=south west,xlabel={number of elements $\#\cPtri$}]
    \addplot table [x=nE,y=estDiv] {data/Example2D.dat};
    \addplot table [x=nE,y=estGrad] {data/Example2D.dat};
    \addplot table [x=nE,y=errU0] {data/Example2D.dat};
    \addplot table [x=nE,y=estPen] {data/Example2D.dat};
    \addplot table [x=nE,y=estViol] {data/Example2D.dat};

    \logLogSlopeTriangle{0.9}{0.2}{0.68}{0.333}{black}{{\tiny $1/3$}};
  \end{groupplot}
\end{tikzpicture}
  \caption{Error and estimator (contributions) for the problem described in Section~\ref{sec:numerics:2d}.}
  \label{fig:Example2D}
\end{figure}
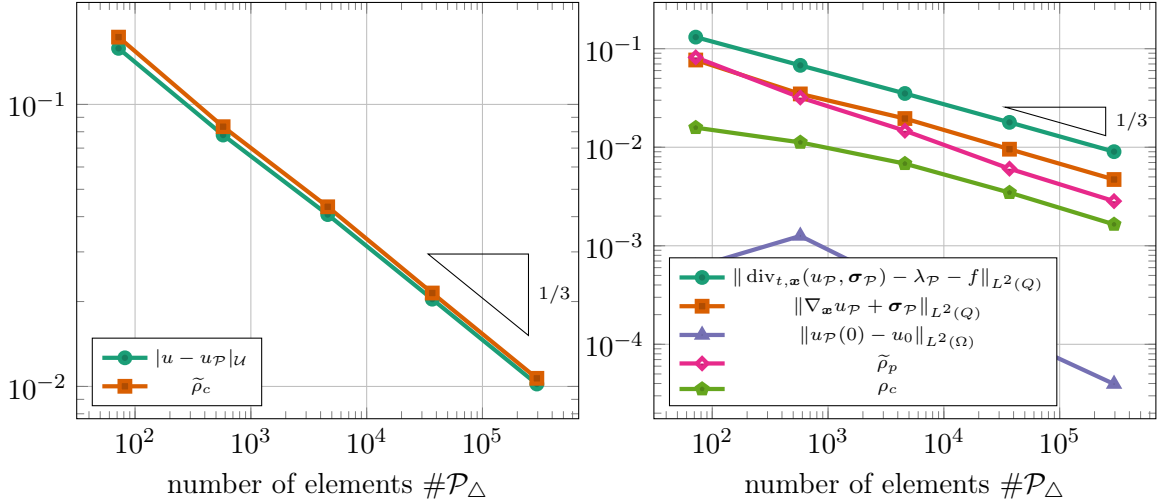
In our final experiment we consider the computational domain $Q = (0,1)^3$ and use a tetrahedral mesh. 
We use data
\begin{align*}
  u_0(x,y) &= 0, \\
  f(t,x,y) &= \begin{cases}
    2x\big((1-x)x(1-y)y + 2t(x(1-x)+y(1-y))\big) & x< \tfrac12, \\
    (1-x)x(1-y)y + 2t(x(1-x)+y(1-y)) & x\geq \tfrac12,
  \end{cases}\\
  g(t,x,y) &= \begin{cases}
    t(1-x)x(1-y)y & x<\tfrac12, \\
    t\widetilde g(x) (1-y)y & x\in[\tfrac12,\tfrac34],\\
    0 & x>\tfrac34
  \end{cases}
\end{align*}
in problem~\eqref{eq:model} with $\cL = -\Delta_\bx$.
Here, $\widetilde g$ is the unique polynomial of degree $3$ such that $g$ and its derivatives are continuous at $x=\tfrac12,\tfrac34$, see also~\cite[Section~5.3]{LSQobstacle} for a similar example for the elliptic obstacle problem. 
Our choice leads to the exact solution
\begin{align*}
  u(t,x,y) = t(1-x)x(1-y)y, \quad (t,x,y)\in Q.
\end{align*}
Note that $u$ is smooth. Furthermore, $f\in H^1(Q)$, $g\in C^\infty([0,T];H^{5/3-\varepsilon}(\Omega))$ for any $\varepsilon>0$. 
In particular, we expect the optimal convergence $|\bu-\bu_\cP|_\cU = \OO(h)$ with $h\eqsim (\#\cP)^{-1/3}$ denoting the uniform mesh-size.
This is observed in Figure~\ref{fig:Example2D} where on the left we compare the error and estimator $\widetilde\rho$ on a sequence of uniformly refined meshes and on the right we compare the different estimator contributions. 

\bibliographystyle{alpha}
\bibliography{literature}

\end{document}